\newtheorem{theorem}{Theorem}[section]
\newtheorem{lemma}[theorem]{Lemma}
\newtheorem{corollary}[theorem]{Corollary}
\newtheorem{proposition}[theorem]{Proposition}
\newtheorem{conjecture}[theorem]{Conjecture}
\theoremstyle{definition}
\newtheorem{definition}[theorem]{Definition}
\newtheorem{remark}[theorem]{Remark}
\newtheorem{set}[theorem]{Setup}
\newtheorem{example}[theorem]{Example}
\def\B{{\mathbb B}}
\def\G{{\mathbb G}}
\def\P{{\mathbb P}}
\def\Q{{\mathbb Q}}
\def\Z{{\mathbb Z}}
\def\cC{{\mathcal C}}
\def\cE{{\mathcal E}}
\def\cF{{\mathcal F}}
\def\cI{{\mathcal I}}
\def\cM{{\mathcal M}}
\def\cN{{\mathcal{N}}}
\def\cO{{\mathcal{O}}}
\def\cP{{\mathcal{P}}}
\def\cQ{{\mathcal{Q}}}
\def\cS{{\mathcal S}}
\def\cU{{\mathcal U}}
\def\Q{{\mathbb{Q}}}
\def\G{{\mathbb{G}}}
\def\f{\varphi}
\def\f{\varphi}
\def\lra{\longrightarrow}
\def\ra{\rightarrow}
\def\lra{\longrightarrow}
\def\rat{\dashrightarrow}
\def\til{\widetilde}
\def\operatorname#1{\mathop{\rm #1}\nolimits}
\def\Proj{\operatorname{Proj}}
\def\Chow{\operatorname{Chow}}
\def\Exc{\operatorname{Exc}}
\def\Hom{\operatorname{Hom}}
\def\Cloc{\operatorname{Chlocus}}
\def\Pic{\operatorname{Pic}}
\def\im{\operatorname{im}}
\def\det{\operatorname{det}}
\def\loc{\operatorname{Locus}}
\def\rat{\operatorname{RatCurves}}
\def\NE{{\operatorname{NE}}}
\def\ME{{\operatorname{ME}}}
\def\Nef{{\operatorname{Nef}}}
\def\Eff{{\operatorname{Eff}}}
\newcommand{\cNE}[1]{\overline{\NE(#1)}}
\newcommand{\cME}[1]{\overline{\ME(#1)}}
\newcommand{\cEff}[1]{\overline{\Eff(#1)}}
\def\ev{{\operatorname{ev}}}
\def\arg{{\operatorname{arg}}}
\def\rel{{\operatorname{rel}}}
\DeclareMathOperator{\cloc}{\mathrm{ChLocus}}
\DeclareMathOperator{\ch}{\mathrm{Chain}}
\newcommand{\shse}[3]{0 ~\ra ~#1~ \lra ~#2~ \lra ~#3~ \ra~ 0}
\begin{document}

\title{On rank $2$ vector bundles on Fano manifolds}

\author{Roberto Mu\~noz}
\address{Departamento de Matem\'atica Aplicada, ESCET, Universidad
Rey Juan Carlos, 28933-M\'ostoles, Madrid, Spain}
\email{roberto.munoz@urjc.es}
\thanks{First and third author partially supported by the spanish government project MTM2009-06964, second author partially supported by MIUR (PRIN project: Propriet\`a
geometriche delle
variet\`a reali e complesse)}

\author{Gianluca Occhetta}
\address{Dipartimento di Matematica, Universit\'a di Trento, via
Sommarive 14 I-38123 Povo (TN), Italy}
\email{gianluca.occhetta@unitn.it}

\author{Luis E. Sol\'a Conde}
\address{Departamento de Matem\'atica Aplicada, ESCET, Universidad
Rey Juan Carlos, 28933 M\'ostoles, Madrid, Spain. Center for Biomedical Technology, Universidad Polit\'ecnica de Madrid, Campus Montegancedo,
28223 Pozuelo de Alarc\'on
Madrid, Spain.}
\email{luis.sola@urjc.es}

\subjclass[2010]{Primary 14M15; Secondary 14E30, 14J45}

\keywords{Vector bundles, Fano manifolds}

\begin{abstract}
In this work we deal with vector bundles of rank two on a Fano manifold
$X$ with second and fourth Betti numbers equal to one. We study the nef and pseudoeffective cones of the
corresponding projectivizations and how these cones are related
to the decomposability of the vector bundle. As consequences, we obtain
the complete list of $\P^1$-bundles over $X$ that have a second
$\P^1$-bundle structure, classify all the uniform rank two vector
bundles on this class of Fano manifolds and show the stability of indecomposable Fano bundles (with one exception on $\P^2$).
\end{abstract}

\maketitle

\section{Introduction}\label{sec:intro}

Whilst the classification of vector bundles on the complex projective line is a well stated result, with numberless applications in Algebraic Geometry, the situation in higher dimensions is much more involved, even for vector bundles of low rank. For instance,
no indecomposable rank two vector bundles on $\P^n$, $n\geq 5$
are known so far, despite of the efforts of many algebraic geometers interested on Hartshorne's Conjecture (cf. \cite{Ha}).

On the other hand several results in the literature provide splitting
conditions for vector bundles on $\P^n$. In this direction, the work
of Ancona, Peternell and Wi\'sniewski (cf. \cite{APW}) is of
particular interest for our purposes: they show, using techniques
of Mori theory, that if a rank two vector bundle $\cE$ is indecomposable then $-K_{\P(\cE)}$ cannot
be ample, with the exception of a precise list of bundles.

In a previous paper (cf. \cite{MOS}) we showed that the
amplitude of $-K_{\P(\cE)}$ might be replaced by milder positivity conditions.
Here we address the problem of understanding how splitting may be inferred from positivity, in the more general setting of rank two vector bundles on Fano manifolds of Picard number one.
More concretely, we study the nef and pseudoeffective cones of $\P(\cE)$. They are completely determined by their {\it slopes} $\tau$ and $\rho$ (see Definition \ref{def:threshold}), and we show that their values are tightly related with the (in)decomposability of the bundle (see, for instance, Corollary \ref{cor:tau+rho}).

The structure of the paper is the following: in Sections \ref{sec:threshold} and \ref{sec:posit} we bound the set of values of the pair $(\tau,\rho)$ for an indecomposable vector bundle under different sets of hypotheses. We also include a number of examples in which these invariants are computed and find relations with the Nagata Conjecture for plane curves, see Example \ref{ex:P2}.

In Section \ref{sec:loci} we pay special attention to the behavior of these cones with
respect to rational curves contained in $X$ and with the loci of minimal sections on them:
it is in fact a fundamental question whether these cones are determined by rational curves in $\P(\cE)$.

The second fundamental ingredient of our work, with which we deal in Section \ref{sec:split}, is a splitting criterion based on \cite[Thm.~10.5]{APW} and \cite[Thm.~1]{Ballico}. It basically says, see Corollaries \ref{cor:condition} and \ref{cor:nsapp} for details, that if $\cE$ is indecomposable, then $\rat(\P(\cE))_y$ cannot contain complete curves for the general $y\in\P(\cE)$. As a consequence we obtain that $\cE$ decomposes unless $\tau$ is sufficiently large.

Throughout Sections \ref{sec:posit} and \ref{sec:split} we will also see how our arguments get enhanced if we make some further assumptions on the cones of $\P(\cE)$, semiampleness of the generators of $\mbox{Nef}(X)$, for instance. In this last case (see Proposition \ref{prop:secondray}), applying some results of number theory to our formulas, we obtain severe restrictions on the invariants of $(X,\cE)$.

On the other hand, assuming that $\cE$ is Fano (i.e., $\P(\cE)$ Fano) allows us to put together the Mori machinery and our tools. This leads in Theorem \ref{prop:stFano} to a proof of a Grauert--Schneider type result: indecomposable rank two Fano bundles on Fano manifolds of Picard number one and fourth Betti number $b_4(X)=1$ are stable (with one exception on $\P^2$). Furthermore, under these hypotheses, we give, in Theorem \ref{thm:pbundles}, the complete list of Fano bundles satisfying that $i_X-c_1$ is even, where $i_X$ is the index of $X$ and $c_1$ is an integer corresponding to the first Chern class of $\cE$ as explained in subsection \ref{ssec:setup}.
These bundles are those whose projectivization has a second $\P^1$-bundle structure. This results leads, in particular, to a complete classification of uniform rank two vector bundles
on Fano manifolds under some assumptions (see Corollary \ref{cor:unif}).

The techniques and results of the present paper can be applied also to classify Fano bundles satisfying that $i_X-c_1$ is odd.
The proof in this case is longer and more complicated than the one of Theorem \ref{thm:pbundles}, so we will present it in another paper \cite{MOS4}.

Finally in Section \ref{sec:apphart} we show how our techniques may be used to improve some well-known results concerning Hartshorne's Conjecture for codimension two subvarieties of $\P^n$.

\medskip

\noindent{\bf Acknowledgements:} The authors would like to thank E. Ballico for interesting discussions and suggestions, the Korea Institute for Advanced Study (KIAS) for hospitality during part of the preparation of this paper and C. Casagrande for pointing out a mistake in a previous version of the paper. They would also like to thank an anonymous referee for helpful and constructive comments.

\subsection{Setup}\label{ssec:setup}
Throughout this paper we will work in the following general setup:

\begin{set}\label{notn:setup}
$X$ will be a complex Fano manifold of dimension $n$ whose Picard group is generated by the ample line bundle $\cO_X(H_X)$ and $\cE$ will be a
normalized rank two vector bundle on $X$, that is,  whose determinant equals $\cO_X(c_1H_X)$, with $c_1=0$ or $-1$.
\end{set}
Using the isomorphism $\Pic(X)\cong  \Z \langle H_X \rangle $, we will freely identify a line bundle with the corresponding integer. In particular, the determinant of $\cE$ and the anticanonical line bundle are identified with  integers $c_1$ and $i_X$ ({\it index of} $X$). If $a$ is an integer and $\cF$ is a sheaf on $X$, $\cF(a)$ denotes the sheaf $\cF\otimes \cO_X(aH_X)$.

As usual $\P(\cE)$ denotes the Grothendieck projectivization of $\cE$, i.e.
$
\P(\cE)=\Proj_X\left(\bigoplus_{k\geq 0}S^k\cE\right).
$
The tautological line bundle on $\P(\cE)$ will be denoted by $\cO(1)$, the natural projection from $\P(\cE)$ to $X$ by $\pi$, and the pull-back of $H_X$ to $\P(\cE)$ by $H$.
By $L$ we will denote a divisor with associated line bundle $\cO(1)$, and by $-K_{\rel}$ a relative canonical divisor of $\P(\cE)$ over $X$, i.e. a divisor associated to the line bundle $\det(\pi^*\cE^\vee(1))$.
The second Chern class of $\cE$ will be denoted by $c_2(\cE)\in H^4(X,\Z)$, and its discriminant $c_1^2(\cE)-4c_2(\cE)$ by $\Delta(\cE)$.

The {\em Mori cone of} $\P(\cE)$ will be denoted by $\cNE{\P(\cE)}$. It has two extremal rays, $R_1$ and $R_2$, where
$R_1$ corresponds to $\pi$. The ray $R_2$ will often be referred to as
{\em the second extremal ray of} $\P(\cE)$. The dual cone of $\cNE{\P(\cE)}$ is the {\it nef cone of} $\P(\cE)$, denoted by $\Nef(\P(\cE))$. We will also consider the {\em pseudoeffective cone of} $\P(\cE)$, denoted by $\cEff{\P(\cE)}$: the closure of the convex cone generated by
effective divisors. By \cite{BDPP}, it is the dual of the {\em cone of movable curves} $\cME{\P(\cE)}$.

Some of our results will require intersection theory of cycles of codimension one and two. When doing this we will abuse of notation and freely identify cycles with their numerical classes. We will sometimes assume that $b_4(X)=1$, in which case the quotient of $H^4(X,\Z)$ modulo numerical equivalence is isomorphic to $\Z$. We will denote by $\Sigma$ a positive  (meaning that $\Sigma \cdot H^{n-2}>0$) generator $\Sigma$ of this group, so that we
may write $c_2(\cE)=c_2 \Sigma$, $H_X^2=d\, \Sigma$, $\Delta(\cE)=
(dc_1^2 -4c_2) \Sigma =: d\Delta \Sigma$ for some $c_2,d\in\Z$. In particular the usual Chern-Wu relation on  $\P(\cE)$ may be written as
$K_{\rel}^2=\Delta H^2$.

We will sometimes consider the minimum integer $\beta$ such that $\cE(\beta)$ has nonzero global sections, so that there is an exact sequence \begin{equation}\shse{\cO_X(-\beta )}{\cE}{\cI_Z(c_1 +\beta)}\label{eq:shse}
\end{equation}
where $Z\subset X$ has pure codimension two and its cohomology class is $c_2(\cE( \beta))$. The interest of this sequence relies on the fact that, since $X$ is Fano, the bundle $\cE$ decomposes as a sum of line bundles if and only if $Z=\emptyset$. This is in fact equivalent to $Z$ being numerically equivalent to $0$, since $Z \cdot H^{n-2}>0$ when $Z$ is nonempty.

By definition, $\cE$ is {\it stable} (resp. {\it semistable}) if $\beta > - c_1/2$ (resp. $\beta \geq -c_1/2$).
Recall that, by Bogomolov Inequality and Mehta-Ramanathan Theorem,
if $\cE$ is semistable, then $\Delta(\cE)\cdot H_X^{n-2} \le 0$. See \cite{HL} for a complete account on stability of sheaves.
If $X=\P^n$, it is well-known that $\cE$ is not stable if $\Delta=0$, see \cite[Cor. 1]{Bt}. We will prove in Lemma \ref{lem:estdelta=0} that a similar statement holds for Fano manifolds of Picard number one.

Finally, we will consider rational curves in $X$, for which we will adopt the notation and conventions appearing in \cite{K}. Given a rational curve
$\ell\subset X$, the pull-back of $\cE$ via the normalization of $\ell$
takes the form  $\cO_{\P^1}(a)\oplus\cO_{\P^1}(b)$ with $a+b=(H_X\cdot \ell)c_1$.
We will say that $\ell$
has {\em splitting type} $(a,b)$ with respect to $\cE$, or that $\cE$ has splitting type $(a,b)$ with respect to $\ell$.
A rational curve of $H_X$-degree one is called a
{\em line}. Given a family of rational curves $\cM$ in $X$ (i.e. a component of $\rat^n(X)$) of $H_X$-degree $\mu$, with universal family $p:\cU\to\cM$ and evaluation morphism $\ev:\cU\to X$, and given a non negative integer $t$, we will denote by $\cM^t\subset\cM$ the locally closed subset, endowed with the reduced scheme structure, parametrizing curves on which $\cE$ has splitting type $((c_1+t)\mu/2,(c_1-t)\mu/2)$.
In the same way we will use $p^t:\cU^t\to\cM^t$ and $\ev^t:\cU^t\to X$ to denote the corresponding associated maps. We say that $\cE$ is {\it uniform with respect to} $\cM$ if $\cM=\cM^t$ for some $t$.

Given an element $\ell\in\cM^t$ we will sometimes consider minimal sections of $\P(\cE)$ over $\ell$, that will be denoted by $\til\ell$. The set of minimal sections over curves of $\cM^t$ is a family of rational curves in $\P(\cE)$, that we will denote by $\til\cM^t$. We will use $\til p^{\,t}$ and  $\til\ev^t$ to denote the corresponding morphisms.

As usual, a subindex $(\;\,)_x$  on $\cM$, $\cM^t$, $\til\cM^t$, etc., means that we are restricting ourselves to curves passing through the point $x$.

Given a proper family $\cM$ of rational curves or of rational $1$-cycles, and a point $x \in \loc(\cM)$ we will denote by $\cloc_x(\cM)$ the equivalence class of $x$ with respect to the set-theoretic relation associated to the proper proalgebraic relation $\ch(\cU)$, cf. \cite[IV.4.8]{K}. It is the closed subset of $X$ consisting of points that can be joined to $x$ by a connected chain of cycles parametrized by $\cM$.

We will use the fact  that the numerical class of every curve in $\cloc_x(\cM)$ can be written as a linear combination of the numerical classes of irreducible components of cycles parametrized by $\cM$. 


\section{The nef cone of a vector bundle}\label{sec:threshold}

It is well known that the relative anticanonical divisor of a smooth non constant surjective morphism between smooth projective varieties is not ample (cf. \cite[Cor.~2.8]{KMM}). In the next definition we introduce two invariants that give a measure of the (lack of) positivity of $-K_{\rel}$. For simplicity we will stick to Setup \ref{notn:setup}, though the definitions are meaningful in a much broader setting.

\begin{definition}\label{def:threshold}
Given $(X,\cE)$ as in Setup \ref{notn:setup} we denote by $\tau(\cE)$ the only real number such that $-K_{\rel}+\tau H$ is nef but not ample, and we call it the {\it slope of} $\Nef(\P(\cE))$. In a similar manner, we define $\rho(\cE)$ as the only real number such that $-K_{\rel}+\rho H$ is pseudoeffective but not big, and call it the {\it slope of} $\cEff{\P(\cE)}$ (we refer the interested reader to \cite[2.2 B]{L} for details on pseudoeffective divisors).

Equivalently, we may have defined
$$\begin{array}{l}\vspace{0.1cm}\tau(\cE)=\sup\big\{\tau(\cE,C)\,\big|C\mbox{ irreducible curve in }\P(\cE)\big\},\\ \rho(\cE)=\sup\big\{\tau(\cE,C)\,\big|C\mbox{ irred. movable curve in }\P(\cE)\big\},\end{array}$$ where
$$
\tau(\cE,C):=\left\{\begin{array}{ll}\vspace{0.2cm}-\infty&\mbox{ if }C\mbox{ is a fiber of }\pi,\\\dfrac{K_{\rel}\cdot C}{H\cdot C} &\mbox{ otherwise.}\end{array}\right.
$$
If there is no possible confusion we will use $\tau$, $\rho$ and $\tau(C)$ instead of $\tau(\cE)$ and $\tau(\cE,C)$.
\end{definition}

\begin{remark}\label{rem:splittype}
If $\ell$ is a rational curve of splitting type $(a,b)$ with respect to $\cE$ and $\til\ell$ a minimal section of $\P(\cE_{|\ell})$ over $\ell$, then $\tau(\til\ell)=|b-a|/H_X\cdot C\geq 0$. Since it depends only on $\cE_{|\ell}$ and $\ell$, abusing of notation, we will usually write $\tau(\ell)$ instead of $\tau(\til\ell)$. In particular, with the notation introduced in Section \ref{ssec:setup}, we have that $\ell\in\cM^t$ iff $\tau(\ell)=t$.
\end{remark}

Throughout the rest of this section we will discuss some features of $\tau$ and the nef cone of $\P(\cE)$. In the following theorem we show that the lowest value of $\tau$ is only achieved by the trivial bundle. Note that the same proof works for vector bundles of any rank.

\begin{theorem}\label{prop:bound}
Let $(X,\cE)$ be as in Setup \ref{notn:setup}. Then $\tau \geq 0$, and
equality holds if and only if $\cE\cong\cO_X^{\oplus 2}$.
\end{theorem}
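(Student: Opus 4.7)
The plan is to split the theorem into the inequality $\tau\ge 0$, the easy direction $\cE\cong\cO_X^{\oplus 2}\Rightarrow\tau=0$, and the harder converse.

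For $\tau\ge 0$, I would use the sup characterization in Definition \ref{def:threshold} together with Remark \ref{rem:splittype}: any irreducible curve $\ell\subset X$ with splitting type $(a,b)$, $a\le b$, yields a minimal section $\til\ell$ with $\tau(\til\ell)=(b-a)/(H_X\cdot\ell)\ge 0$, so $\tau\ge 0$. Equivalently, $\tau<0$ would make $-K_{\rel}$ ample at $t=0$ (since $-K_{\rel}+tH$ is ample for every $t>\tau$), contradicting the fact recalled at the opening of this section that the relative anticanonical of a smooth non-constant surjective morphism between smooth projective varieties is never ample. The implication $\cE\cong\cO_X^{\oplus 2}\Rightarrow\tau=0$ is immediate: $\P(\cE)\cong X\times\P^1$, and $-K_{\rel}$ is the pullback of the ample line bundle $\cO_{\P^1}(2)$ via the second projection, hence nef but not ample.

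For the converse, assuming $\tau=0$, I would first use $0=\sup\tau(\til\ell)\ge\tau(\til\ell)\ge 0$ to conclude $\tau(\til\ell)=0$ on every irreducible curve $\ell\subset X$, i.e.\ $\cE|_\ell\cong\cO_\ell(a)^{\oplus 2}$ with $2a=c_1(H_X\cdot\ell)$. The case $c_1=-1$ would force $H_X\cdot\ell\in 2\Z$ for every curve, which I would rule out by the unimodularity of the intersection pairing $\Pic(X)\times N_1(X,\Z)\to\Z$ on the simply-connected Fano $X$ with $\Pic(X)=\Z\langle H_X\rangle$: some curve class must have odd $H_X$-degree. Hence $c_1=0$ and $\cE|_\ell\cong\cO_\ell^{\oplus 2}$ on every $\ell$.

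To conclude $\cE\cong\cO_X^{\oplus 2}$ from this, I would observe that $\cE$ is nef (nefness is detected on curves) and self-dual when $c_1=0$ (since $\cE^\vee\cong\cE\otimes\det(\cE)^{-1}=\cE$), hence numerically flat. By the Demailly--Peternell--Schneider theorem, or equivalently the Kobayashi--Hitchin correspondence, $\cE$ is projectively flat, corresponding to a representation $\pi_1(X)\to\operatorname{PGL}_2$; since Fano manifolds are simply connected, this representation is trivial and $\cE\cong\cL\oplus\cL$ for some $\cL$. The constraint $c_1(\cE)=0$ forces $\cL\cong\cO_X$, giving $\cE\cong\cO_X^{\oplus 2}$. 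The main obstacle I anticipate is this invocation of DPS / Kobayashi--Hitchin; a more self-contained alternative would derive $\beta=0$ from (\ref{eq:shse}) (semistability gives $\beta\ge 0$, while numerical flatness produces a global section forcing $\beta\le 0$) and then use $0\to\cO_X\to\cE\to\cI_Z\to 0$ with a rational-curve argument: a free $\ell$ through $z\in Z$ with $\ell\not\subset Z$ produces a section of $\cO_{\P^1}^{\oplus 2}$ vanishing at $z$, hence identically on $\ell$, contradicting non-vanishing elsewhere, so $Z=\emptyset$, and Kodaira vanishing $H^1(X,\cO_X)=0$ splits the sequence.
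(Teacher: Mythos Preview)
Your approach to the converse is different from the paper's and largely sound, but there is a gap in the step ruling out $c_1=-1$. The equality $\tau(\til\ell)=0$, and hence the splitting $\cE|_\ell\cong\cO_\ell(a)^{\oplus 2}$, is established only for \emph{rational} curves $\ell$: the lower bound $\tau(\til\ell)\ge 0$ comes from Remark~\ref{rem:splittype} and relies on bundles over $\P^1$ decomposing. For a non-rational curve $C$, all that $\tau=0$ gives (via Remark~\ref{rem:ruled}) is that $\cE|_C$ is semistable, which places no parity constraint on $H_X\cdot C$. Thus your assertion ``$H_X\cdot\ell\in 2\Z$ for every curve'' is justified only for rational $\ell$, whereas unimodularity of $\Pic(X)\times N_1(X,\Z)\to\Z$ produces an odd-degree $1$-cycle that need not be supported on rational curves; it is not clear in general that on a Fano manifold of Picard number one the generator of $N_1(X,\Z)$ is represented by a rational curve. (A smaller point: once $c_1=0$, nefness of $\cE$ follows directly from $-K_{\rel}=2L$ being nef on all of $\P(\cE)$, not merely from its behaviour on rational curves as your parenthetical suggests.)

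The paper avoids the case distinction entirely by working projectively: $\tau=0$ gives $\P(\cE|_\ell)\cong\ell\times\P^1$ for every rational $\ell$ regardless of $c_1$, and Proposition~\ref{prop:trivial}---a geometric Chow-quotient argument valid in any rank---yields $\P(\cE)\cong X\times\P^1$ directly, whence $\cE\cong\cL^{\oplus 2}$ and normalisation forces $\cL=\cO_X$. If you wish to retain the DPS route, one repair is to observe that when $c_1=-1$ the bundle $S^2\cE(1)$, which is isomorphic to the traceless endomorphism bundle of $\cE$, has $c_1=0$ and is again numerically flat, hence trivial by your argument; a rank two bundle with a non-scalar global endomorphism then splits as a sum of line bundles, forcing $\tau\ge 1$, a contradiction.
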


\begin{proof}
The first assertion follows by definition of $\tau$ and the fact that the $\Q$-twisted bundle $\cE(-c_1/2)$ is not ample since its determinant is zero.

If $\tau=0$, then
Remark \ref{rem:splittype} tells us that the splitting type of
every rational curve $\ell$ with respect to $\cE$ is either $(0,0)$ or $(-H_X \cdot \ell/2,-H_X \cdot \ell/2)$. Then the conclusion follows from the proposition below (cf. also \cite[Thm. 2.2]{BdS}, where the same result is proved with different techniques).
 \end{proof}

\begin{proposition}\label{prop:trivial}
Let $M$ be a rationally connected manifold and $\cF$ a rank $r$ vector bundle
verifying that $\P(\cF_{|\ell})\cong\ell\times\P^{r-1}$ for every
rational curve in $M$. Then $\P(\cF)\cong M\times\P^{r-1}$ is trivial.
\end{proposition}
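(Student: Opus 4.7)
The plan is to show that $\cF \cong L^{\oplus r}$ for some line bundle $L$ on $M$, which is equivalent to $\P(\cF) \cong M \times \P^{r-1}$. The hypothesis ensures that $\cF_{|\ell} \cong \cO_\ell(a_\ell)^{\oplus r}$ for some integer $a_\ell$ on each rational curve $\ell$, so that $\ell$ admits a $\P^{r-1}$-family of \emph{horizontal} sections in $\P(\cF)_{|\ell} \cong \ell \times \P^{r-1}$, one through each point of the fiber. I would collect these horizontal sections into a proper family $\til\cM$ of rational curves in $\P(\cF)$, and analyze the Chow-chain-locus $\cloc_{y_0}(\til\cM)$ for a chosen point $y_0 \in \P(\cF)$, using the framework of \cite[IV.4.8]{K} recalled in Section \ref{ssec:setup}.

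Since through every point of $\pi^{-1}(\ell)$ there is exactly one horizontal section over $\ell$ (an automorphism of $\P^1 \times \P^{r-1}$ fixing a fiber pointwise is the identity), the natural map from $\til\cM$ to the corresponding family of rational curves in $M$ is a $\P^{r-1}$-bundle, so $\til\cM$ covers $\P(\cF)$. By rational connectedness of $M$, any point of $M$ can be reached from $\pi(y_0)$ by a chain of rational curves, and such a chain lifts (starting at $y_0$) to a chain of horizontal sections in $\til\cM$; hence $\pi(\cloc_{y_0}(\til\cM)) = M$.

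The main technical hurdle is to show that $\pi$ is injective on $\cloc_{y_0}(\til\cM)$, so that $\cloc_{y_0}(\til\cM)$ is a section of $\pi$. The local uniqueness of horizontal lifts makes the restricted morphism $\cloc_{y_0}(\til\cM) \to M$ étale; injectivity is then equivalent to the map having degree one, which I would deduce either from the simple-connectedness of rationally connected manifolds (ruling out nontrivial monodromy of horizontal transport along rational chains), or more algebraically by exhibiting $\cloc_{y_0}(\til\cM)$ as irreducible of dimension $n = \dim M$ and arguing that the étale morphism above, which sends the distinguished point $y_0$ to $\pi(y_0)$, is therefore birational.

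Once each $\cloc_{y_0}(\til\cM)$ is a section of $\pi$, varying $y_0$ over the fiber $\pi^{-1}(x_0) \cong \P^{r-1}$ produces a $\P^{r-1}$-family of pairwise disjoint sections that fill out $\P(\cF)$. The associated morphism $\P^{r-1} \times M \to \P(\cF)$, sending $(p,x)$ to the point above $x$ in the section through $(x_0, p)$, is inverse to $(\pi, \phi)$, where $\phi \colon \P(\cF) \to \pi^{-1}(x_0)$ sends each $y$ to the unique point of $\pi^{-1}(x_0)$ lying on the same section as $y$; this yields the desired isomorphism $\P(\cF) \cong M \times \P^{r-1}$.
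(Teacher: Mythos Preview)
Your approach is headed in the right direction but has a genuine gap at the step you flag as the ``main technical hurdle.'' To use the $\cloc$ machinery you need $\til\cM$ to be \emph{proper}, and the only way to achieve that is to take a closure in a Chow-type space, thereby admitting limit cycles. Nothing you have said prevents those limit cycles from acquiring irreducible components lying in fibers of $\pi$: the ``local uniqueness of horizontal lifts'' holds only over an irreducible rational curve, not over the degenerate cycles sitting at the boundary of the family. If even one such vertical component appears, $\cloc_{y_0}(\til\cM)$ picks up an entire fiber of $\pi$ and can never be a section; in particular the map to $M$ is not \'etale, and the monodromy heuristic via simple connectedness never gets off the ground.

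Ruling out vertical components is exactly where the hypothesis on $\cF$ does its real work, and the paper's proof makes this explicit via the $\Q$-divisor $L-D/r$ (with $D$ associated to $\pi^*\det\cF$): it has degree zero on every cycle of the chosen Chow component $\til\cC$, it is strictly positive on curves contained in fibers of $\pi$, and it is nef on \emph{every} rational curve in $\P(\cF)$ because $\cF_{|\ell}$ is balanced for all $\ell$. Together these force each component of each limit cycle to have $(L-D/r)$-degree zero, hence to avoid the vertical direction. The paper then passes to the quotient $\varphi:\P(\cF)\dashrightarrow Y$, shows $\dim Y=r-1$, proves that a \emph{general} fiber $Z$ of $\varphi$ is a section using $-K_Z\equiv(-\pi^*K_M)|_Z$ and simple connectedness of $M$, and concludes from that single section via its (trivial) normal bundle and $H^1(M,\cO_M)=0$. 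Your \'etale claim and the disjointness of the sections as $y_0$ varies both depend on the same control of limit cycles, so the divisor argument (or something equivalent) is not optional.
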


\begin{proof}
Let us denote by $\pi:\P(\cF)\to M$ the canonical projection.
Take a component $\cC$ of $\Chow(M)$ containing the class a very free (see \cite[Def. 4.5]{Deb}) rational curve $C\subset M$ and let $\til \cC$ be the component of $\Chow(\P(\cF))$ containing the class of a minimal section $\til C$ of $\P(\cF)$ over $C$.
By construction $\til \cC$ dominates $\P(\cF)$ and we may consider the quotient $\varphi:\P(\cF)\dashrightarrow Y$ associated to the proper algebraic relation given by $\til \cC$.

We claim first that $\dim Y\leq r-1$. In fact, take two general points $x_1,x_2\in M$. There exists an irreducible curve $C'$ of $\cC$ joining $x_1$ and $x_2$ hence, given any element $y_1\in\pi^{-1}(x_1)$, we may choose a section of $\P(\cF)$ over $C'$ meeting $y_1$ and $\pi^{-1}(x_2)$. Then $\Cloc(\til \cC)_{y_1}$ intersects the general fiber of $\pi$, hence its dimension is bigger than or equal to $\dim M $. Since $\varphi^{-1}\big(\varphi(y_1)\big)$ contains $\Cloc(\til \cC)_{y_1}$, the claim follows.

Let $L$ and $D$ be divisors in $\P(\cF)$ associated to $\cO_{P(\cF)}(1)$ and $\pi^*(\det\cF)$, respectively, and consider the $\Q$-divisor $L-D/r$ which, by construction, has intersection zero with every cycle of $\til \cC$ and positive intersection with curves on fibers of $\pi$. Furthermore, it is nef on rational curves in $\P(\cE)$. In fact, given a (non-vertical) rational curve $\ell_0\subset \P(\cF)$, we may consider the normalization $\ell$ of its projection into $M$; since $\cF_{|\ell}\cong\cO_{\P^1}(D\cdot\ell_0/r)^{\oplus r}$, it follows that $L-D/r$ is nef on $\P(\cF_{|\ell})$, and thus on $\ell_0$.

In particular $L-D/r$ has intersection zero with every component of a cycle of $\til \cC$. If $\dim Y$ were  strictly smaller than $r-1$, the general fiber of $\varphi$ would intersect  the fibers of $\pi$ in positive dimension, but this is impossible, since $L-D/r$ has positive intersection with curves contained in the fibers of $\pi$. Therefore $\dim Y=r-1$.

Denoting by $Z$ a general fiber of $\varphi$ and by $p:Z\to M$ the natural projection we may consider the projective bundle $\P(p^*\cF)$. It admits a section whose image we denote by $Z'$, and we have a commutative diagram:
$$
\xymatrix{Z'_{} \ar@{^{(}->}[r]&\P(p^*\cF)\ar[r]^{p'}\ar[d]& \P(\cF)\ar[d]^{\pi}\\
&Z^{} \ar@/^1pc/[lu]\ar@{^{(}->}[ru]\ar[r]^p&M}
$$

Since $-K_{\P(\cF)}=-\pi^*K_M +rL-D$ we have that $-K_Z \equiv (-\pi^*K_M)|_{Z}$, hence   $-K_Z = -(\pi|_Z)^*K_M$. This implies that $Z$ is a section of $\pi$, since $M$ is simply connected.

Finally we consider the surjective morphism $\cF\to\cF''$ determined by the section $Z$ and denote by $\cF'$ its kernel. With this notation, the normal bundle $N_Z$ of $Z$ in $\P(\cF)$ is isomorphic to $\cF'^ \vee\otimes\cF''$. On the other hand we know that $N_Z$ is trivial, therefore $\cF'\cong\cF''^ {\oplus r-1}$. Finally, the rational connectedness of $M$ provides $H^1(M,\cO_M)=0$, and the splitting of the sequence
$$
\shse{\cF''^{\oplus r-1}}{\cF}{\cF''}
$$
\end{proof}

As a consequence of Proposition \ref{prop:trivial} we obtain the following result on the stability of $\cE$ when $\Delta=0$, that will be used in Section \ref{sec:split} to make our splitting criteria for vector bundles work also in the case $\Delta=0$ (see Lemma \ref{lem:ns->s}).

\begin{lemma}\label{lem:estdelta=0} Let $(X,\cE)$ be as in Setup \ref{notn:setup} with $\Delta=0$. Then $\cE$ is not semistable unless $\cE$ is trivial.
\end{lemma}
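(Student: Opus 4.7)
The strategy is to apply Proposition \ref{prop:trivial} to conclude $\cE\cong\cO_X^{\oplus 2}$, which requires verifying that $\cE|_\ell$ is balanced (i.e., of the form $\cO_{\P^1}(k)\oplus\cO_{\P^1}(k)$) for every rational curve $\ell\subset X$; a priori this only holds when $c_1(H_X\cdot\ell)$ is even for every such $\ell$, so the conclusion will already implicitly force $c_1=0$.

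I begin by analyzing the sequence (\ref{eq:shse}). A direct Chern class computation on it yields
$$\Delta(\cE)\cdot H_X^{n-2}=(c_1+2\beta)^2 H_X^n-4[Z]\cdot H_X^{n-2}.$$
Semistability says $\beta\geq -c_1/2$, and $[Z]\cdot H_X^{n-2}\geq 0$ with equality iff $Z=\emptyset$ (using $b_4(X)=1$). Under $\Delta=0$, the boundary case $\beta=-c_1/2$ forces $c_1=\beta=0$ and $Z=\emptyset$; then (\ref{eq:shse}) reduces to $0\to\cO_X\to\cE\to\cO_X\to 0$, which splits via $H^1(X,\cO_X)=0$ on the Fano manifold $X$, giving $\cE\cong\cO_X^{\oplus 2}$.

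In the remaining strictly stable case $\beta>-c_1/2$, I would aim for a contradiction by verifying the hypothesis of Proposition \ref{prop:trivial}. Restricting $\cE$ to a general complete intersection surface $S\subset X$ cut out by $(n-2)$ divisors in $|mH_X|$ for $m\gg 0$, Mehta--Ramanathan ensures $\cE|_S$ is semistable with $\Delta(\cE|_S)=\Delta(\cE)\cdot H_X^{n-2}=0$; the equality case of Bogomolov's inequality, together with cyclicity of $\Pic(S)$ (Lefschetz hyperplane) and $H^1(S,\cO_S)=0$, forces $\cE|_S\cong\cO_S^{\oplus 2}$ when $c_1=0$ and produces an integrality obstruction when $c_1=-1$ (the Jordan--H\"older factors would be line bundles of slope $-\tfrac12\notin\Z$). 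Hence $\cE|_\ell$ is balanced for every rational curve $\ell\subset S$, and varying $S$ in a suitably chosen covering family propagates this to every rational curve of $X$; Proposition \ref{prop:trivial} then gives $\cE\cong\cO_X^{\oplus 2}$, contradicting $\beta>-c_1/2$. I expect the main obstacle to be this final propagation step --- ensuring that every rational curve of $X$ lies on some surface section $S$ for which the Bogomolov-equality argument applies --- which requires either a careful choice of family or a separate upper-semicontinuity argument on the splitting type of $\cE|_\ell$.
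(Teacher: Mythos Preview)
Your boundary case ($\beta=-c_1/2$, forcing $c_1=\beta=0$) is correct and coincides with how the paper handles the strictly semistable situation. (The parenthetical ``using $b_4(X)=1$'' is not needed and not assumed in Setup~\ref{notn:setup}: positivity of $[Z]\cdot H_X^{n-2}$ for nonempty $Z$ follows from ampleness of $H_X$ alone.)

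The stable case is where there is a genuine gap. Your key claim is that on the surface $S$ the equality case of Bogomolov's inequality, together with $\Pic(S)\cong\Z$ and $H^1(S,\cO_S)=0$, forces $\cE|_S\cong\cO_S^{\oplus 2}$ (or gives an integrality obstruction when $c_1=-1$). But equality $\Delta(\cE|_S)=0$ does \emph{not} by itself imply that a semistable bundle splits as a sum of line bundles; for that you need the deeper fact that on a simply connected variety a \emph{stable} bundle with numerically trivial Chern classes has rank one --- this is exactly \cite[Thm.~5.1]{MR} (equivalently, the Donaldson--Uhlenbeck--Yau correspondence). Since Mehta--Ramanathan makes $\cE|_S$ \emph{stable} for $m\gg 0$, your elementary section-plus-$H^1$ argument from the boundary case is unavailable here. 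And your $c_1=-1$ integrality obstruction presupposes proper Jordan--H\"older factors, which a stable bundle does not have. Once you do invoke \cite[Thm.~5.1]{MR} on $S$, note that it already yields a contradiction there (no stable rank~$2$ bundle with those invariants), so the propagation step to all rational curves via Proposition~\ref{prop:trivial} becomes superfluous.

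The paper is more direct: it applies \cite[Thm.~5.1]{MR} on $X$ itself, bypassing any restriction to a surface. For $c_1=0$ this immediately rules out stability, reducing to your boundary case. For $c_1=-1$ the paper uses a clean trick: $S^2\cE(1)$ is a rank~$3$ bundle with $c_1=0$ which, by \cite[Thm.~3.2.11]{HL}, is a direct sum of stable bundles each with vanishing $c_1$ and $c_2$; applying \cite[Thm.~5.1]{MR} to each summand makes $S^2\cE(1)$ trivial, so $\cE$ is balanced on every rational curve and Proposition~\ref{prop:trivial} gives the contradiction.
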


\begin{proof}
Let us assume first that $c_1=0$. By \cite[Thm.~5.1]{MR} no stable vector bundle of rank $\ge 2$ with $c_1=c_2(\cE)\cdot H^{n-2}=0$ on $X$ exists, since $X$ is simply connected. If $\cE$ is semistable then $h^0(\cE)\ne 0$ and $h^0(\cE(-1))=0$, so that $\cE$ is trivial, as $c_2(\cE)$ is numerically trivial.

If $c_1=-1$ and $\cE$ is semistable (hence stable) then $S^2\cE(1)$ is a direct sum of stable vector bundles whose first Chern class is zero, cf. \cite[Thm.~3.2.11]{HL}. Since its rank is three, the second Chern class of every direct summand is zero, too. Applying \cite[Thm.~5.1]{MR} to each summand, we get that $S^2\cE(1)$ is trivial. Hence, for any rational curve $\ell \subset X$, we have $\cE_{|\ell}=\cO(-H_X\cdot \ell/2,-H_X \cdot \ell/2)$, contradicting Proposition \ref{prop:trivial}.
 \end{proof}

We will often consider the restriction of $\cE$ to curves in $X$. Let us then recall some well known facts about ruled surfaces (we refer the reader to \cite[V Sect. 2]{Ha2} for details).

\begin{remark}\label{rem:ruled}
Given a smooth curve $C$ and a $\P^1$-bundle $\P(\cF)$, the vector space $N^1(\P(\cF))$ is
generated by the class of a minimal section $\til C$ and the class of a fiber $F$. Recall that $\cF$ is stable (resp. semistable) if and only if the self-intersection $-e$ of $\til C$ is positive (resp. nonnegative).
If $\cF$ is not semistable, then the nef cone of $\P(\cF)$ is generated by the classes of $F$ and $\til C+eF$ and the pseudoeffective cone by the classes of $F$ and $\til C$; moreover the only irreducible curve whose class does not lie in the nef cone is $\til C$. If $\cF$ is semistable, then the two cones coincide, and they are generated by the classes of $F$ and $2\til C+eF$.

Given any curve in $X$ we may consider the restriction of $\cE$ to its normalization $C$, that we denote by $\cE_{|C}$. Given any irreducible curve $D$ in $\P(\cE)$ we may consider the normalization $\iota:C\to X$ of $\pi(D)$ and the strict transform $D'$ of $D$ in $\P(\cE_{|C})$, obtaining:
$$
\tau(D)=\dfrac{K_{\rel}\cdot D}{H\cdot D}= \dfrac{K_{\P(\cE_{|C})|C}\cdot D'}{\iota^*H\cdot D'}\leq \tau(C_1),
$$
where $C_1$ is an effective $1$-cycle defined as the pushforward into $\P(\cE)$ of $\til C$ if $\cE_{|C}$ is not semistable, and of $2\til C+eF$ otherwise. Note that $K_{\P(\cE_{|C})|C}\cdot {\til C}=e$ and $K_{\P(\cE_{|C})|C}\cdot (2\til C+eF)=0$, hence $\tau(C_1)$ is bigger than $0$ if $\cE_{|C}$ is not semistable and $0$ otherwise.
\end{remark}

In particular, we may state the following

\begin{lemma}\label{lem:tnotss}
Let $(X,\cE)$ be as in Setup \ref{notn:setup}, and assume that $\cE$ is not trivial. Then
$$
\tau=\sup\left\{\left. \tau(\til C)\right|\,\cE_{|C}\mbox{ not semistable}\right\}.
$$
\end{lemma}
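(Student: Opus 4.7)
The strategy is essentially to unpack the inequality established in Remark \ref{rem:ruled} and combine it with Theorem \ref{prop:bound}. The direction $\tau \geq \sup\{\tau(\til C)\mid \cE_{|C}\text{ not semistable}\}$ is immediate from the definition of $\tau$, since each minimal section $\til C$ is itself an irreducible curve in $\P(\cE)$ (and not a fiber of $\pi$), so $\tau(\til C) \leq \tau$.

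For the reverse inequality, I would pick an arbitrary irreducible curve $D \subset \P(\cE)$ that is not contracted by $\pi$, let $C$ be the normalization of $\pi(D)$ and $D'$ the strict transform of $D$ in the ruled surface $\P(\cE_{|C})$. Remark \ref{rem:ruled} then yields
$$
\tau(D) \;=\; \frac{K_{\P(\cE_{|C})|C} \cdot D'}{\iota^*H \cdot D'} \;\leq\; \tau(C_1),
$$
where $C_1$ is either the pushforward of $\til C$ (when $\cE_{|C}$ is not semistable) or of $2\til C + eF$ (when it is). In the first case $\tau(C_1) = \tau(\til C)$; in the second case the numerator $K_{\P(\cE_{|C})|C}\cdot(2\til C+eF)$ vanishes so $\tau(C_1) = 0$. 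Consequently every irreducible $D$ satisfies either $\tau(D)\le 0$ or $\tau(D) \leq \tau(\til C)$ for some $C$ with $\cE_{|C}$ not semistable.

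Taking the supremum over all $D$ gives
$$
\tau \;\leq\; \max\!\left\{\,0,\;\sup\{\tau(\til C)\mid \cE_{|C}\text{ not semistable}\}\,\right\}.
$$
To rule out the trivial bound, I invoke Theorem \ref{prop:bound}: since $\cE$ is not trivial, $\tau > 0$. Hence the set on the right is necessarily nonempty and its supremum bounds $\tau$ from above, giving the claimed equality.

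No real obstacle here: the lemma is essentially a bookkeeping consequence of Remark \ref{rem:ruled}. The only subtlety is making sure the "semistable case" contributes only non-positive values of $\tau(D)$, which is why Theorem \ref{prop:bound} (via the hypothesis $\cE\not\cong\cO_X^{\oplus 2}$) is needed to guarantee that the supremum on the right-hand side is actually attained by curves with non-semistable restriction.
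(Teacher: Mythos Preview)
Your proof is correct and follows essentially the same route as the paper's: both use Remark \ref{rem:ruled} to bound $\tau(D)$ above by either $0$ or some $\tau(\til C)$ with $\cE_{|C}$ not semistable, and then rule out the trivial bound. The only cosmetic difference is that the paper invokes Proposition \ref{prop:trivial} directly to exhibit a curve with non-semistable restriction, whereas you use its packaged consequence, Theorem \ref{prop:bound}, to get $\tau>0$ and deduce nonemptiness from the inequality.
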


\begin{proof}
By the previous discussion, we only need to show that there exists a curve in $X$ on which $\cE$ is not semistable. If this was not the case, then $\tau(\ell)$ would be zero for every rational curve $\ell$ in $X$ and $\cE$ would be trivial by Proposition \ref{prop:trivial}.
 \end{proof}
\begin{remark}\label{rem:tau0}{\rm
A central question on the theory of vector bundles on Fano manifolds is to which
extent the geometry of the bundle is determined by its behavior on rational curves. Proposition \ref{prop:trivial} and Lemma \ref{lem:tnotss} motivate us to define
$$
\tau_0:=\sup\left\{\left. \tau( C)\right|\,C \mbox{ rational}\right\}.
$$ So far, we do not know of any example of rank two vector bundle for which $\tau\neq\tau_0$.}
\end{remark}


\begin{remark}\label{rem:tproperties}
It is well known that a vector bundle with $\tau<i_X$ (known in the literature as a {\it Fano bundle}, cf. \cite{APW}) satisfies the following properties:
\begin{itemize}
\item ({\it Rationality}) $\tau\in\Q$.
\item ({\it Base Point Freeness}) $-K_{\rel}+\tau H$ is semiample.
\item ({\it Rational curves in the second ray}) There exists a rational curve $\ell$ for which $\ell \cdot ( -K_{\rel}+\tau H)=0$. In particular $\tau=\tau_0$.
\end{itemize}
It is then natural to ask whether they are satisfied in broader classes of vector bundles on Fano varieties. Section \ref{ssec:ex} below contains a number of examples in which we deal with this question.
\end{remark}

\subsection{Examples}\label{ssec:ex}

\begin{example}\label{ex:split}
If $\cE\cong\cO_X(a)\oplus\cO_X(b)$, then $-K_{\rel}+|b-a|H$ is semiample but not ample, and the corresponding morphism contracts a section of $\P(\cE)$ over $X$, containing the minimal sections of $\cE$ over rational curves in $X$. Note that this in particular shows that $\tau$ is not upper bounded in the class of vector bundles on $X$.
\end{example}

\begin{example}
Indecomposable Fano bundles on projective spaces and quadrics have been extensively studied and completely classified. We refer the interested reader to \cite{SW1}, \cite{SW2} and \cite{APW}.
It is well known that the second contraction of the corresponding projectivization is:
\begin{itemize}
\item  either a smooth blow-up (for $X=\P^2$ or $\Q^4$), or
\item a conic bundle (for $X=\P^2$ or $\Q^3$), or
\item a $\P^1$-bundle (for $X=\P^2,\P^3,\Q^3$ or $\Q^5$).
\item a $\P^2$-bundle (for $\Q^4$).
\end{itemize}
A straightforward computation for every example of the list shows that $\tau=i_X-1$ unless for $\P^d$ bundles, for which $\tau$ equals $i_X- (d+1)$.
\end{example}

There are examples of non Fano bundles for which $\P(\cE)$ enjoys the properties stated in Remark \ref{rem:tproperties}. That is the case, for instance, of the Horrocks--Mumford bundle.

\begin{example}\label{ex:HM}
Let $\cF_{HM}$ denote the Horrocks--Mumford bundle on $\P^4$. The possible splitting
types of lines with respects to $\cF_{HM}$ are $(2,3)$ $(1,4)$, $(0,5)$ and
$(-1,6)$ (cf. \cite{HM}), hence $7\leq\tau_0\leq\tau$. On the other hand the bundle $\cF_{HM}(1)$ is globally generated
(cf. \cite[Prop. 5]{Su}), therefore
$\tau=7$ and the second ray of the Mori cone is generated by the class of a
section of $\P(\cF_{HM})$ over a line of splitting type $(-1,6)$.
\end{example}

Finally we include here an example on $\P^2$ due to Schwarzenberger (see \cite[Thm.~2.2.5]{OSS} for details). We will see that one should not expect $\tau$ to be rational in general. Furthermore, even if $\tau$ is rational, there could be no rational classes in the second extremal ray of $\cNE{\P(\cE)}$.

\begin{example}\label{ex:P2}\cite[Thm.~2.2.5]{OSS} Consider a finite set $\cP=\{P_1,\dots,P_{k}\}\subset\P^2$ of points in the complex projective plane. Blow-up $\P^2$ along $\cP$ to get $\sigma:\B \to \P^2$ and denote by $E=E_1+\dots+E_k$ the exceptional divisor. The {\it Schwarzenberger bundle} maybe defined as the only bundle $\cE$ whose pull-back to $\B$ is an extension of $\cO_\B(-E)$ by $\cO_\B(E)$, whose restriction to $E$ is the standard Euler sequence on $E$.

Now observe that the nefness of $\cE\left(\tau/2\right)$ is equivalent to that of
its pull-back via $\sigma$. Then
$\cE(\tau/2)$ is nef if and only if $\sigma^*H-(2/\tau)E$ is nef and, in particular,
$$\frac{2}{\tau}=\epsilon\left(\cO_{\P^2}(1);\cP\right),$$
where $\epsilon\left(\cO_{\P^2}(1);\cP\right)$ denotes the
{\it Seshadri constant} of the line bundle $\cO_{\P^2}(1)$ with respect to the set of $k$ points  $\cP$. In particular, if $k\geq 9$ one gets
$\tau\geq 2k^{1/2}$ (see, for instance \cite[Sect.~ 8]{Ba}), and the famous Nagata conjecture may be rephrased as follows:

\begin{conjecture}[Nagata]\label{conj:nagata}
With the same notation as above, if $\cP\subset\P^2$ is very general and $k\geq 9$, then $\tau= 2k^{1/2}$.
\end{conjecture}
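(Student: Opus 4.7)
The final statement is Nagata's conjecture, one of the most celebrated open problems in algebraic geometry since 1959, so any "proof proposal" here is really a sketch of plausible directions rather than a complete strategy — none of which is currently known to suffice. I will nevertheless outline how the Schwarzenberger reformulation above suggests attacking it, since the conjecture has been proven in several partial cases by precisely the kind of bundle-theoretic/intersection-theoretic tools the paper is developing.

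The setup reduces the problem to showing $\epsilon(\cO_{\P^2}(1);\cP)=k^{-1/2}$ for very general $\cP$ of cardinality $k\geq 9$. One inequality, $\epsilon\leq k^{-1/2}$, is immediate from the Hodge-index type relation $(\sigma^*H-\epsilon E)^2\geq 0$ applied on $\B$ (via a limiting argument on irreducible curves computing $\epsilon$). So the whole content is the lower bound. The plan I would pursue is the specialization/degeneration approach: degenerate the $k$ general points to a configuration $\cP_0$ (for instance, points lying on a smooth cubic, or a collision configuration) for which one can prove, by Riemann-Roch or explicit vanishing on the resulting blow-up, that no curve of degree $d$ passes through $\cP_0$ with multiplicities $m_i$ violating $d\geq k^{1/2}\max_i m_i$; and then use upper semicontinuity of multiplicities in families to transport the bound to the very general fibre.

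The concrete steps would be: (i) For $k=s^2$ a perfect square, invoke Nagata's original argument via the degeneration of $k$ general points to $k$ nodes of a rational plane curve of degree $s$, where the bound reduces to an elementary intersection computation; this already gives the conjecture whenever $k$ is a square. (ii) For general $k$, embed $k$ points into a larger configuration of $k+r$ points (with $k+r$ close to a square) arranged on a smooth curve, and attempt to deduce the bound for $k$ from a slightly weaker bound for $k+r$ via a specialization argument (this is the Ciliberto--Miranda/Harbourne--Ro\'e type circle of ideas). (iii) Alternatively, pass through the equivalent Seshadri-constant formulation and exploit the nef cone of $\P(\cE)$ on some auxiliary variety (e.g.\ the symmetric product, or a ruled threefold built from $\B$) where the tools of Sections \ref{sec:threshold}--\ref{sec:loci} of the present paper apply.

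The hard part — and the reason the conjecture remains open — is step (ii): whenever $k$ is not a square, every known specialization either loses an additive error term that swamps the sharp $2k^{1/2}$ bound, or else produces a degenerate blow-up whose cohomology is not controllable by standard vanishing. Any genuinely new proof would almost certainly have to exhibit, for each non-square $k\geq 10$, either an effective divisor on $\B$ at points on a special curve that serves as a limit witness, or a new positivity tool beyond Seshadri-constant arithmetic (Newton--Okounkov bodies and symplectic packing have both been tried and fall short by a bounded multiplicative factor). I would not expect the techniques developed elsewhere in this paper, strong as they are for bounding $\tau$ in terms of $\rho$ and $c_2$, to close this gap on their own; their role here is rather to motivate the reformulation and to show that irrationality phenomena of $\tau$ on $\P^2$ are genuinely governed by Nagata-type arithmetic.
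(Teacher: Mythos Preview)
Your assessment is correct in the most important respect: the statement is labeled as a \emph{conjecture} in the paper and is not proved there. The paper explicitly remarks, immediately after stating it, that ``this conjecture has been proven only when $k^{1/2}$ is an integer (cf.~\cite{Na}),'' and uses it only as a device to exhibit (conditionally) a bundle on $\P^2$ with irrational $\tau$. There is therefore no proof in the paper to compare your proposal against.

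Given that, your sketch is an honest survey of the state of the art and of the obstructions (the loss of the sharp constant under specialization for non-square $k$), and your concluding remark that the paper's own tools are not expected to settle the conjecture matches the paper's stance exactly. The only minor comment is that step (i) of your outline---the perfect-square case---is precisely what the paper cites from Nagata's original work, so that part is already accounted for in the text rather than being a new contribution of the present techniques.
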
		

A proof of this would provide an example in which $\tau$ is not rational but
this conjecture has been proven only when $k^{1/2}$ is an integer (cf. \cite{Na}).
Note that from Nagata's proof it follows that $\cE(k^{1/2})$ is ample on every
curve in $\P^2$ if $k^{1/2}$ is an integer bigger than or equal to $4$. In particular,
in this case the exact value of $\tau$ is not achieved on any particular
curve in $\P^2$ and $-K_{\rel}+\tau H$ is not semiample.

Let us focus now on the situation in which $\cP$ consists of $9$ very general points.
In this case $\tau=6$ and $\cE(3)$
is nef and not ample on the (unique) smooth
elliptic cubic curve $C$ containing $\cP$.
It is well known that there are no rational curves in $\B$ numerically proportional to the strict transform of $C$, hence $\tau(\ell)\neq 6$ for every rational curve $\ell$.

On the other hand, let $\psi:\P^2\dashrightarrow\P^2$ be the composition of three consecutive Cremona transformations, based on $P_1,P_2,P_3$, then on $P_4,P_5,P_6$ and finally on $P_7,P_8,P_9$. Starting with the
line $\ell_0$ by $P_8$ and $P_9$, consider the sequence of rational curves defined by $\ell_{n+1}:=\psi_*\ell_n$. A direct computation provides $\tau(\ell_n)=6-16/((-1)^n+18n^2+24n+7)$.
Hence we get an example in which $\tau=\tau_0$ but is not computed by a rational curve.
\end{example}


\section{Stability of $\cE$ vs. pseudoeffective divisors of $\P(\cE)$}\label{sec:loci}

In this section we study the relation between the slope $\rho$ of $\cEff{\P(\cE)}$ and the stability of $\cE$. We include some preliminary results on families of minimal sections of $\P(\cE)$ over rational curves that will be useful here and in the forthcoming sections. Recall that:
$$
\rho=\sup\big\{\tau(D)\,\big|D\mbox{ irred. movable curve in }\P(\cE)\big\}.
$$

Given a curve in $X$ with normalization $\iota: C\to X$ such that $\cE_{|C}=\iota^*\cE$ is not semistable, we will denote by $\til C$ the image into $\P(\cE)$ of the minimal section of $\P(\cE_{|C})$ over $C$. Then, arguing as in Remark \ref{rem:ruled}, we may prove the following:

\begin{lemma}\label{lem:alphabetass}
Let $(X,\cE)$ be as in Setup \ref{notn:setup} and assume that $\cE$ is semistable. Then
$$
\rho=\sup\left(\{0\}\cup\left\{\tau(\til C)|\;\til C\mbox{ movable in $\P(\cE)$, }\cE_{|C}\mbox{ not semistable}\right\}\right)\geq 0.
$$
\end{lemma}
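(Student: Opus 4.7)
The plan is to prove the equality
$$\rho = \sup(\{0\} \cup S), \quad \text{where } S := \{\tau(\til C) \mid \til C \text{ movable in } \P(\cE),\ \cE|_C \text{ not semistable}\},$$
by establishing both inequalities; the inequality $\rho \geq 0$ then follows automatically from the presence of $\{0\}$ in the set.

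For the inequality $\rho \leq \sup(\{0\} \cup S)$, I would take any irreducible movable curve $D$ in $\P(\cE)$. Since $D$ cannot be contained in a fiber of $\pi$, the projection $C := \pi(D)$ is a curve in $X$. Let $\iota : C' \to X$ be its normalization and $D'$ the strict transform of $D$ on the ruled surface $\P(\iota^*\cE)$. As in Remark \ref{rem:ruled}, one has $\tau(D) = \tau(D')$, so the analysis reduces to the ruled surface. If $\iota^*\cE$ is semistable, the nef and pseudoeffective cones of $\P(\iota^*\cE)$ coincide and every irreducible curve on that surface satisfies $\tau \leq 0$; if $\iota^*\cE$ is not semistable, Remark \ref{rem:ruled} says that the only irreducible curve lying outside the nef cone is the minimal section $\til C'$. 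Hence either $D' = \til C'$, forcing $D = \til C \in S$, or $\tau(D) \leq 0$. Taking the supremum over such $D$ yields the claimed inequality.

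For the reverse inequality, $\rho \geq \tau(\til C)$ for each $\til C \in S$ is immediate from the definition of $\rho$, since $\til C$ is movable by hypothesis. The remaining task, which is the main obstacle, is to show $\rho \geq 0$; equivalently, that $-K_{\rel} = 2L - c_1 H$ is pseudoeffective on $\P(\cE)$. For this, I would exploit that $\cE$ semistable implies that the $\Q$-twist $\cE(-c_1/2)$ is semistable with vanishing first Chern class, so by the Mehta--Ramanathan theorem its restriction to a general complete intersection curve $C$ is semistable of degree zero, and therefore nef on $C$. Equivalently, the $\Q$-divisor $L' := L - c_1 H/2$ on $\P(\cE)$ is nef on $\P(\cE|_C)$ for $C$ in a covering family. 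Combining this fiberwise nefness with the BDPP characterization of the pseudoeffective cone as dual to the cone of movable curves then yields that $L'$, and hence $-K_{\rel} = 2L'$, is pseudoeffective on $\P(\cE)$, giving $\rho \geq 0$. The delicate point is this last passage from fiberwise nefness on a covering family to global pseudoeffectivity, which needs a careful density/continuity argument to extend $L' \cdot \gamma \geq 0$ from the movable classes supported on the restriction surfaces to the entire movable cone.
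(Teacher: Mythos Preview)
Your argument for the inequality $\rho \leq \sup(\{0\}\cup S)$ is correct and matches the paper's: reduce via the ruled surface $\P(\cE_{|C})$ and use that when $\cE_{|C}$ is not semistable the only irreducible curve outside the nef cone is the minimal section.

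However, your argument for $\rho \geq 0$ contains a genuine error of direction. You assert that ``$\rho \geq 0$; equivalently, that $-K_{\rel}$ is pseudoeffective''. This is backwards. From the definition, $-K_{\rel}+\rho' H$ is pseudoeffective if and only if $\rho' \geq \rho$; hence $-K_{\rel}$ is pseudoeffective exactly when $\rho \leq 0$. So even if your Mehta--Ramanathan/BDPP argument went through, it would yield $\rho \leq 0$, not $\rho \geq 0$. (In fact, for a semistable bundle one eventually gets $\rho \geq 0$, and combining the two would force $\rho=0$ always, which is false: see Remark \ref{rem:alphanotinteger}.) In terms of the dual picture, $\rho \geq 0$ means that there exist \emph{movable} curves $D$ with $\tau(D)=K_{\rel}\cdot D/H\cdot D$ arbitrarily close to $0$ from below; it is a statement about the movable cone reaching a certain ray, not about $-K_{\rel}$ lying in the pseudoeffective cone.

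The paper's proof of $\rho \geq 0$ does precisely this construction. Fix $\delta<0$, take a general complete intersection curve $C\subset X$ so that $\cE_{|C}$ is semistable (Mehta--Ramanathan), and work on the ruled surface $S=\P(\cE_{|C})$. There the nef and movable cones coincide and are bounded by $F$ and a class with $\tau=0$, so one can pick a \emph{very ample} curve $C_\delta\subset S$ with $\tau(C_\delta)\geq\delta$. The key point you are missing is that such a $C_\delta$ is movable \emph{in $\P(\cE)$}: its normal bundle in $\P(\cE)$ is an extension of $N_{S/\P(\cE)}|_{C_\delta}$ (globally generated, since $S=\pi^{-1}(C)$ with $C$ a complete intersection) by $N_{C_\delta/S}$ (ample), hence globally generated. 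Letting $\delta\to 0^-$ gives $\rho\geq 0$.
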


\begin{proof}
We begin by showing that $\rho\geq 0$; in fact given $\delta<0$, we will find a movable curve $C_\delta\subset\P(\cE)$ satisfying that $\tau(C_\delta)\geq \delta$. In order to see this, fix a general complete intersection curve $C$ in $X$  and consider the restriction $\cE_{|C}$, which is semistable by Mehta-Ramanathan Restriction Theorem. The nef cone of $\P(\cE_{|C})$, which is equal to $\cME{\P(\cE_{|C})}$, is generated by the fiber $F$ and by $-K_{\rel}$, hence there exists a very ample curve $C_{\delta}$ in $\P(\cE_{|C})$ satisfying $\tau(C_\delta)\geq \delta$. Since we may assume that $C_{\delta}$ is smooth and, by construction, the normal bundle of $C_{\delta}$ in $\P(\cE)$ is globally generated, then $C_{\delta}$ is movable in $\P(\cE)$.

On the other hand, given a movable curve $D$ in $\P(\cE)$ with $\tau(D)>0$, we will show that $D$ is of the form $\til C$, for some $C$. In fact, arguing as in Remark \ref{rem:ruled}, we consider the corresponding ruled surface $\P(\cE_{|C})$, where $C$ is the normalization of $\pi(D)$. If $\cE_{|C}$ is semistable, then $\tau(D)\leq 0$, a contradiction. If $\cE_{|C}$ is not semistable, then we consider the strict transform $D'$ of $D$ in $\P(\cE_{|C})$. If $D'$ is movable in $\P(\cE_{|C})$, then $\tau(D)=\tau(D')<0$, hence we may assume that $D'$ is not movable, therefore $D'=\til C$.
 \end{proof}

We will now make a closer analysis of the positivity of divisors in $\P(\cE)$ with respect to rational curves in $\P(\cE)$. We begin with the following straightforward lemma (see, for instance, \cite[V Sect. 2]{Ha2}).

\begin{lemma}\label{lem:splitsection}
Let $(X,\cE)$ be as in Setup \ref{notn:setup} and let $\cM$ be a dominating
family of rational curves of $H_X$-degree $\mu$ in $X$. Let $D$
be an effective divisor numerically proportional to $-K_{\rel}+bH$, and $\ell$ be a curve in $\cM^t$, $t\geq 0$. Then:
\begin{itemize}
\item If $b<-t$, then $\P(\cE_{|\ell})\subset D$.
\item If $t> b\geq -t$ and $\P(\cE_{|\ell})$ is not
contained in $D$, then $\P(\cE_{|\ell})\cap D$ contains the (unique) minimal section $\til{\ell}$, being exactly the section when $b=-t$.
\end{itemize}
In particular, if $\cM^t$ dominates $X$ and $b<t$, then every linear system of the form
$|k(-K_{\rel}+bH)|$ contains a fixed component $F$, where $F$ denotes the
closure of $\loc(\til\cM^{t})$.
\end{lemma}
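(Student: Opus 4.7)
The plan is to restrict everything to the ruled surface $\P(\cE_{|\ell})$ over a curve $\ell\in\cM^t$ and to reduce to a numerical computation there. Since $\cE_{|\ell}\cong\cO_{\P^1}\big((c_1+t)\mu/2\big)\oplus\cO_{\P^1}\big((c_1-t)\mu/2\big)$, this surface is the Hirzebruch surface $\F_{t\mu}$; write $C_0=\til\ell$ for the minimal section (of self-intersection $-t\mu$) and $F$ for a fiber of $\P(\cE_{|\ell})\to\ell$. Combining $-K_{\rel}\equiv 2L-c_1H$ with $H|_{\P(\cE_{|\ell})}=\mu F$ and $L|_{\P(\cE_{|\ell})}\equiv C_0+\big((c_1+t)\mu/2\big)F$, a direct calculation yields
$$(-K_{\rel}+bH)|_{\P(\cE_{|\ell})}\equiv 2C_0+(t+b)\mu F.$$
Thus whenever $\P(\cE_{|\ell})\not\subset D$, the restriction $D|_{\P(\cE_{|\ell})}$ is an effective divisor proportional to this class.

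With this setup the first item is immediate: if $b<-t$ the coefficient $(t+b)\mu$ is strictly negative, so the class lies outside the effective cone of $\F_{t\mu}$, which is generated by $C_0$ and $F$ (and analogously for $\F_0=\P^1\times\P^1$ when $t=0$). Hence $D|_{\P(\cE_{|\ell})}$ cannot be effective and we must have $\P(\cE_{|\ell})\subset D$. For the second item I would assume $\P(\cE_{|\ell})\not\subset D$ and compute
$$D|_{\P(\cE_{|\ell})}\cdot C_0\equiv (b-t)\mu<0;$$
since $C_0=\til\ell$ is irreducible, it must appear as a component of $D|_{\P(\cE_{|\ell})}$. In the boundary case $b=-t$ the class is proportional to $2C_0$ and one has $D|_{\P(\cE_{|\ell})}\cdot F=2$, which forces any fiber component to have multiplicity zero; so the restriction equals $\til\ell$ set-theoretically.

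For the \emph{in particular} statement I would let $D$ be an arbitrary member of $|k(-K_{\rel}+bH)|$ and apply the previous cases to every $\ell\in\cM^t$. If $b<-t$, the first item together with the dominance of $\cM^t$ would force $D$ to contain the union of the surfaces $\P(\cE_{|\ell})$, a dense subset of $\P(\cE)$; since $D$ is a proper divisor this is impossible, so $|k(-K_{\rel}+bH)|$ is empty and the claim holds vacuously. If $-t\le b<t$, the same dimension argument rules out the first alternative, so the second item applies and $\til\ell\subset D$ for every $\ell\in\cM^t$; taking closures yields $F=\overline{\loc(\til\cM^t)}\subset D$, which is the desired fixed component. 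The main technical point is simply to carry out the Hirzebruch-surface analysis uniformly in $t\ge 0$; since the hypothesis $t>b\ge -t$ of the second item is vacuous when $t=0$, only the first item needs to be verified in that degenerate case, and it proceeds identically.
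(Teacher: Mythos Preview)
Your argument is correct and is exactly the ruled-surface computation the paper has in mind when it calls the lemma ``straightforward'' and refers to \cite[V, Sect.~2]{Ha2}: restrict to $\P(\cE_{|\ell})\cong\F_{t\mu}$, identify $(-K_{\rel}+bH)|_{\P(\cE_{|\ell})}\equiv 2C_0+(t+b)\mu F$, and read off the two items from the effective cone and the negativity of the intersection with $C_0$. One cosmetic point: in the ``in particular'' part you need not rule out $\P(\cE_{|\ell})\subset D$ for \emph{every} $\ell$ (only generically); but since that alternative already gives $\til\ell\subset D$, the conclusion $\overline{\loc(\til\cM^t)}\subset D$ holds in either case.
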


A weaker version of this lemma provides a splitting criterion for uniform
bundles:

\begin{corollary}\label{cor:splitsection}
Let $(X,\cE)$ be as in Setup \ref{notn:setup}. Assume moreover that $\cM=\cM^t$ is a uniform unsplit covering
family of rational curves on $X$, with $t>0$, and that
$H^0(\cE(-(c_1+t)/2))\ne 0$. Then $\cE$ is decomposable.
\end{corollary}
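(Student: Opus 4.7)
The plan is to use the section $s \in H^0(\cE(-(c_1+t)/2))$ to construct an inclusion of line bundles $\cO_X((c_1+t)/2) \hookrightarrow \cE$ and then show it is actually a sub\emph{bundle}, yielding a splitting of $\cE$. The section $s$ fits into an exact sequence
$$0 \to \cO_X((c_1+t)/2) \to \cE \to \cI_Z((c_1-t)/2) \to 0,$$
where $Z \subset X$ has pure codimension two (possibly empty). Once we prove that $Z = \emptyset$, the sequence becomes an extension of $\cO_X((c_1-t)/2)$ by $\cO_X((c_1+t)/2)$, classified by an element of $H^1(X,\cO_X(t))$. Since $X$ is Fano with index $i_X \geq 1$ and $t > 0$, Kodaira vanishing gives $H^1(X,\cO_X(t))=0$, so the extension splits and $\cE$ is decomposable.

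The key step is therefore showing $Z=\emptyset$. First I would restrict $s$ to an arbitrary $\ell \in \cM=\cM^t$ of $H_X$-degree $\mu$. By the uniformity hypothesis,
$$\cE_{|\ell}(-(c_1+t)\mu/2) \cong \cO_\ell \oplus \cO_\ell(-t\mu),$$
and since $t\mu>0$ the only global sections of this bundle are constants in the first summand. Therefore $s_{|\ell}$ is either identically zero or nowhere vanishing, which means that every $\ell \in \cM$ either satisfies $\ell \subset Z$ or $\ell \cap Z = \emptyset$ (no partial intersections are possible).

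Now I would use irreducibility of $\cM$, together with unsplitness, to conclude. The subset $\cM_Z := \{\ell \in \cM : \ell \subset Z\}$ is closed in $\cM$ (the evaluation $\ev:\cU \to X$ is proper because $\cM$ is unsplit, so the image of $\ev^{-1}(X\setminus Z)$ under $p$ is open in $\cM$), and its complement $\cM_{\text{out}} = \{\ell : \ell \cap Z = \emptyset\}$ is open. Since $\cM$ is irreducible, one of the two must be empty. If $\cM_{\text{out}}=\cM$, no curve of $\cM$ meets $Z$, contradicting the covering hypothesis applied to any point $z \in Z$; if $\cM_Z = \cM$, then $\loc(\cM) \subset Z$, contradicting that $\cM$ covers $X$ while $\dim Z \leq n-2$. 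Hence $Z = \emptyset$, finishing the proof. The only subtle point is the openness/closedness dichotomy, which is exactly where the assumption that $\cM$ is unsplit (and hence proper) enters decisively.
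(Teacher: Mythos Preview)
Your argument follows the paper's up through the key dichotomy: every curve $\ell \in \cM$ is either contained in $Z$ or disjoint from it, because $s|_\ell$ is a section of $\cO_\ell \oplus \cO_\ell(-t\mu)$ and hence either identically zero or nowhere zero. The reduction from $Z=\emptyset$ to decomposability via Kodaira vanishing is also fine (the paper takes this step for granted, having recorded it in the setup).

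The gap is the sentence ``Since $\cM$ is irreducible, one of the two must be empty.'' You have established that $\cM_Z$ is closed and that $\cM_{\text{out}}$ is its open complement, but this is automatic for \emph{any} closed subset and its complement and carries no information: irreducibility would require both pieces to be closed, connectedness would require both to be open, and neither holds here. There is no a~priori reason $\cM_Z$ cannot be a nonempty proper closed subset; indeed, $\cM_Z$ is exactly the zero locus of the section of the line bundle $p_*\ev^*\!\big(\cE(-(c_1+t)/2)\big)$ on $\cM$ induced by $s$, and such a section may well vanish along a nontrivial divisor. (Incidentally, your justification that $\cM_Z$ is closed is also garbled: it follows from $p$ being open --- it is a $\P^1$-bundle --- not from properness of $\ev$.)

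The paper fills this gap by a different route that genuinely uses unsplitness together with $\Pic(X)\cong\Z$. From the dichotomy, every curve of $\cM$ through a point $z\in Z$ lies in $Z$; iterating gives $\cloc_z(\cM)\subset Z$. But for an unsplit covering family on a variety of Picard number one, $\cloc_z(\cM)=X$ for every $z$ (all curves in the chain locus are numerically proportional to curves of $\cM$, so a nontrivial $\cM$-equivalence quotient would produce a divisor trivial on $\cM$, impossible when $\Pic(X)\cong\Z$). This contradicts $Z\subsetneq X$. That argument --- not any openness/closedness consideration --- is where unsplitness actually enters.
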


\begin{proof}
Take $D \in |L-((c_1+t)/2)H|=|1/2(-K_{\rel}+tH)|$ and denote by $Z\subset X$ the biggest subset such that $\pi^{-1}(Z)
\subset D$, whose cohomology class is $c_2(\cE((c_1+t)/2))$. Hence it suffices
to show that $Z$ is empty: assume the contrary.
If $\ell\in\cM$ is not contained in $Z$ then any nonzero
section of $\cE((c_1+t)/2))$ does not vanish identically on $\ell$, hence
is nowhere vanishing on  $\ell$ since $\cE$ has splitting type $((c_1+t)/2,(c_1-t)/2)$, therefore $\ell \cap Z=\emptyset$.
It follows that $\cloc_z(\cM) \subset Z$, for every $z \in Z$ and this contradicts $\Pic(X)\simeq \Z$ and $\cM$ unsplit.
 \end{proof}

\begin{remark}\label{rem:splitsectionAW}Let us observe that the same result holds in the case $t=0$  without assuming $H^0(\cE(-c_1/2))\neq 0$ (cf. \cite[Prop.~1.2]{AW}).
\end{remark}

In Lemma \ref{lem:alphabetass} we have seen that $\rho\geq 0$ when $\cE$ is semistable. The next proposition shows that the converse is also true and that, moreover, if $\cE$ is not semistable then the pseudoeffective cone of $\P(\cE)$ is completely determined by the maximal destabilizing subsheaf of $\cE$.

\begin{proposition}\label{prop:alpha=beta}
Let $(X,\cE)$ be as in Setup \ref{notn:setup}, let $\beta$ be the minimum integer  such that $\cE(\beta)$ has nonzero global sections  and let $|k(-K_{\rel}+bH)|$ be a nonempty
linear system in $\P(\cE)$. Either $b\geq 0$ or $|k(-K_{\rel}+bH)|$ has a base component numerically proportional to $-K_{\rel}+\rho H$. In particular, if $\cE$ is not semistable then $\rho=2\beta+c_1<0$.
\end{proposition}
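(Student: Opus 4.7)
The plan is to extract a distinguished ``minimal section'' divisor out of \eqref{eq:shse} and show that it is precisely the base component being asserted. The inclusion $\cO_X(-\beta)\hookrightarrow \cE$ gives a global section of $\cE(\beta)$, which via $\pi_{*}\cO(L+\beta H)=\cE(\beta)$ corresponds to an effective divisor $\bar\sigma\in|L+\beta H|$. Minimality of $\beta$ prevents this section from vanishing on any divisor of $X$, so $\bar\sigma$ has no vertical fixed component and is the closure of the section of $\pi$ defined on $X\setminus Z$ by $\cO_X(-\beta)\subset\cE$. Immediately $2\bar\sigma\sim -K_{\rel}+(c_1+2\beta)H$, giving the upper bound $\rho\leq c_1+2\beta$.

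The intersection computation I need uses $\bar\sigma^{*}\cO(1)\cong \cO_X(c_1+\beta)$ in Grothendieck's convention. For any irreducible $C'\subset X$ with $H_X\cdot C'>0$ avoiding $Z$, the pushforward $C=\bar\sigma_{*}C'$ satisfies $H\cdot C=H_X\cdot C'$ and $-K_{\rel}\cdot C=(c_1+2\beta)(H_X\cdot C')$, so
\[
D\cdot C=k\bigl[(c_1+2\beta)+b\bigr](H_X\cdot C')\qquad\text{for every }D\in|k(-K_{\rel}+bH)|.
\]
If $\cE$ is semistable, Lemma~\ref{lem:alphabetass} gives $\rho\geq 0$ and the nonemptiness of $|k(-K_{\rel}+bH)|$ forces $b\geq\rho\geq 0$, placing us in the first alternative. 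Otherwise $c_1+2\beta<0$, and for $b<0$ the above intersection is strictly negative; effectivity of $D$ then forces every such $C$ into its support, and as $C'$ varies the $C$ sweep out the irreducible divisor $\bar\sigma$, so $\bar\sigma$ itself appears as a component of $D$. This exhibits $\bar\sigma$ as a base component of the system, of class numerically proportional to $-K_{\rel}+(c_1+2\beta)H$.

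To finish I must establish $\rho\geq c_1+2\beta$, so that the class of $\bar\sigma$ matches $-K_{\rel}+\rho H$ up to a positive scalar (using that $L$ and $H$ are independent in $N^1(\P(\cE))$). I would argue by contradiction: if $\rho<c_1+2\beta$, any rational $r\in(\rho,c_1+2\beta)$ gives $-K_{\rel}+rH$ in the interior of $\cEff{\P(\cE)}$, hence big, hence $|k(-K_{\rel}+rH)|$ nonempty for some $k$. Writing such a $D=j\bar\sigma+D'$ with $j$ the maximal multiplicity of $\bar\sigma$ in $D$ and $D'$ effective (not containing $\bar\sigma$), the inequality $D'\cdot C\geq 0$ for a generic $C$ of the previous type, combined with $|r|>|c_1+2\beta|$, yields $j\geq k(1+|r|/|c_1+2\beta|)>2k$; this contradicts the effectivity bound $2k-j\geq 0$ on the $L$-coefficient of $D'$. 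So $\rho=c_1+2\beta<0$.

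The step I anticipate as the main obstacle is precisely this final multiplicity count, since it is where the equality for $\rho$ is actually produced. One must balance an intersection-theoretic lower bound on $j$ against the effectivity upper bound $j\leq 2k$, while also ensuring the auxiliary curve $C$ may be chosen generic enough to avoid both $Z\subset X$ and the support of $\bar\sigma\cap D'$, so that the inequality $D'\cdot C\geq 0$ is genuinely available.
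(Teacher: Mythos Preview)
Your argument is correct and reaches the conclusion by a route genuinely different from the paper's. The paper works through a dominating family $\cM$ of rational curves: invoking Lemma~\ref{lem:splitsection}, it identifies the closure $F$ of $\loc(\til\cM^{t})$ (the locus of minimal sections over general curves of $\cM$) as a base component of $|k(-K_{\rel}+bH)|$ whenever $b<0$, and then an iterated subtraction argument forces such an $F$ to be non-big, hence numerically on the boundary ray $-K_{\rel}+\rho H$. Only afterwards is the equality $\rho=2\beta+c_1$ obtained, by specializing the first part to $|L+\beta H|$ and using that its members are irreducible unisecant divisors. You instead bypass rational curves entirely: you build the section divisor $\bar\sigma\in|L+\beta H|$ directly from the inclusion $\cO_X(-\beta)\hookrightarrow\cE$, detect it as a base component via negativity of $D\cdot C$ for curves $C$ lying on $\bar\sigma$, and establish $\rho=c_1+2\beta$ by an independent multiplicity count against the effectivity bound $j\le 2k$. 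That count is sound: with $a:=c_1+2\beta$ one gets $D'\cdot C=[k(a+r)-ja](H\cdot C')$, and $D'\cdot C\ge 0$ together with $r<a<0$ indeed forces $j>2k$. Your approach is more elementary in that it avoids Lemma~\ref{lem:splitsection} and the geometry of minimal sections, at the cost of invoking Lemma~\ref{lem:alphabetass} to dispose of the semistable case; the paper's proof treats both cases uniformly through the rational-curve machinery.
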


\begin{proof}
Let $\cM$ be a dominating family of rational curves in $X$, and denote by $\mu$ its $H_X$-degree.
Denote $t:=\tau(\ell)$, $\ell\in\cM$ general, and let $F$ be the closure of $\loc(\til\cM^{t})$.

Assume $b<0$; then $b<0\leq t$ and Lemma \ref{lem:splitsection} above tells us that $|k(-K_{\rel}+bH)|$ has a base component $F$, the
closure of $\loc(\til\cM^{t})$. Moreover, either $F$ or $k(-K_{\rel}+bH)-F$ are of type $j(-K_{\rel}+cH)$ with $\rho\leq c<0$.

If $F$ is not of that type, then we may apply the argument above to $|k(-K_{\rel}+bH)-F|$, obtaining that it has a base component $F$. After a finite number of steps, the nonempty linear system $|k'(-K_{\rel}+b'H)|:=|k(-K_{\rel}+bH)-rF|$  will not contain $F$ as a base component, contradicting that $b'$ will still be smaller than $0$.

We may then assume that $F\equiv j(-K_{\rel}+cH)$ with $\rho\leq c<0$. Arguing in a similar way, we get that
every multiple $rF$ of $F$  has $F$ in its base locus, hence it follows that
$|rF|$ is zero-dimensional. In particular $F$
is not big, hence $F$ is numerically proportional to $(-K_{\rel}+\rho H)$. This proves the first part of the statement.

For the second part, assume that $\cE$ is not semistable. Then $\beta<-c_1/2$ by definition, and we may apply the claim above to $|1/2(-K_{\rel}+(2\beta+c_1)H)|=|L+\beta H|\neq\emptyset$ to conclude that it has a base component $F$. But $|L+\beta H|$ consists of irreducible unisecant - i.e. of relative degree $1$ - divisors, hence this is possible only when $F\in|L+\beta H|$. It follows that $\rho= 2\beta+c_1$.
 \end{proof}

\begin{remark}\label{rem:alphanotinteger}
The equality $\rho= 2\beta+c_1$ holds also for bundles which are semistable, but not stable (and so have $\beta=c_1=0$). In fact $\rho \ge 0$ by Lemma \ref{lem:alphabetass} but, on the other hand $|-K_{\rel} |=| 2L | $ is not empty. This is no longer true for stable bundles:
the blow-up $Y$ of $\P^3$ along a twisted cubic (see \cite{SW1}) is isomorphic to a $\P^1$-bundle $\P(\cE)$ over $\P^2$. The exceptional divisor of $Y$ over $\P^3$ is not unisecant, hence $\rho\neq 2\beta+c_1$.
\end{remark}

The rest of the section is devoted to a Grauert--M\"ulich type result for Fano manifolds.
The classical Grauert--M\"ulich theorem (cf. \cite[II, Sect. 2]{OSS}) tells us that the general splitting type of a vector bundle on $\P^n$ with respect to a line cannot have gaps of length bigger than one. For other base varieties, one may still control the gaps between the slopes of the Harder-Narasimhan filtration of the restriction of a vector bundle on a manifold to general complete intersection curves (see, for instance \cite{FHS}, \cite{MRs} and \cite{Fl}). However it was already noted by Hirschowitz in \cite{Hi} that the standard arguments work in a much broader setting.

In the case of dominating families of rational curves of Fano manifolds, a similar result can be obtained as a by-product of the arguments of this section; we state it here for bundles of rank two. We will make use of the following Remark.

\begin{remark}\label{lem:a=bprelim}
Let $(X,\cE)$ be as in Setup \ref{notn:setup}, $\cM$ be a dominating family of rational curves, satisfying that $\cM_x$ is irreducible for $x\in X$ general. Let $t:=\tau(\ell)$,  $\ell\in\cM$ general, and assume $t\neq 0$. Then the closure of $\loc(\til\cM^{t})$ is $\P(\cE)$ or a unisecant divisor.
\end{remark}

\begin{proof}
Assume that $\til\cM^t$ does not dominate $\P(\cE)$ and let $D$ be the closure of $\loc(\til\cM^{t})$.
Let $f$ be a general fiber of $\pi$; by the generality of $f$ we can assume that $\pi(f) \not \in \pi(D \setminus \loc(\til\cM^{t}))$, and that the intersection of $D$ and $f$ is transversal;  by the irreducibility of $\cM_x$ the intersection number is then one.
%
%
 \end{proof}

The following result may be interpreted as a Grauert-M\"ulich type theorem for rational
curves on Fano manifolds.

\begin{proposition}\label{prop:section} Let $(X,\cE)$ be as in Setup \ref{notn:setup}. Let $\cM$ be a dominating family of rational curves on $X$ such that $\cM_x$ is irreducible for the general $x\in X$. Let $\mu$ denote the $H_X$-degree of $\cM$. Either $\tau(\ell)\leq 1/\mu$ for the general $\ell\in\cM$, or $\cE$ is destabilized by a line bundle of degree
$\beta=-\big(\tau(\ell)+c_1\big)/2$.
\end{proposition}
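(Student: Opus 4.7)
Set $t := \tau(\ell)$ for the general $\ell \in \cM$ and assume $t > 1/\mu$; since $t\mu$ is a non-negative integer by the description of $\cM^t$ in subsection \ref{ssec:setup}, this is equivalent to $t\mu \geq 2$. The goal is to produce the destabilizing injection $\cO_X((c_1+t)/2) \hookrightarrow \cE$ predicted by the statement. The plan is to run the dichotomy of Remark \ref{lem:a=bprelim} applied to $\til\cM^t$: since $t > 0$, the closure $F$ of $\loc(\til\cM^t)$ in $\P(\cE)$ is either a unisecant divisor or all of $\P(\cE)$; the first possibility yields the destabilization directly, while the second has to be excluded.

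In the unisecant case, write $F \equiv L + \beta H$ and restrict $F$ to the Hirzebruch surface $\P(\cE_{|\ell}) \cong \F_{t\mu}$ for a general $\ell \in \cM^t$. In the basis $\{\sigma_2, F_0\}$ given by the minimal section $\sigma_2 = \til\ell$ and a fiber $F_0$, the restriction of $F$ lies in the class $\sigma_2 + (p + \beta\mu)F_0$ with $p = (c_1+t)\mu/2$. Since $F$ is the closure of the locus of the minimal sections themselves, its restriction equals $\sigma_2$ alone for generic $\ell$, with no extra fiber content, forcing $p + \beta\mu = 0$ and hence $\beta = -(c_1+t)/2$; the section of $\cE(\beta)$ cutting out $F$ then gives the destabilizing subbundle $\cO_X(-\beta) \hookrightarrow \cE$, of degree $-\beta = (c_1+t)/2 > c_1/2$ since $t > 0$.

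The main obstacle is to rule out the dominant case $F = \P(\cE)$ under the hypothesis $t\mu \geq 2$. Should it occur, every minimal section $\til\ell$ would be a movable curve in $\P(\cE)$, so $\rho \geq \tau(\til\ell) = t > 0$; by Proposition \ref{prop:alpha=beta}, $\cE$ would be forced to be semistable, and from Lemma \ref{lem:splitsection} (applied to the nonempty linear system $|L + \beta H|$, which cannot have the whole $\P(\cE)$ as a fixed component) one would read off $\beta \geq (t-c_1)/2$. My intended contradiction is a Hirschowitz-style normal bundle argument: for a general $\til\ell$ over a free $\ell \in \cM^t$, the sequence
$$
0 \to \cO_{\P^1}(-t\mu) \to N_{\til\ell/\P(\cE)} \to N_{\ell/X} \to 0,
$$
together with $H^0(\cO_{\P^1}(-t\mu)) = 0$ (which uses $t\mu \geq 2$), yields $H^0(N_{\til\ell/\P(\cE)}) \hookrightarrow H^0(N_{\ell/X})$; the analysis of the evaluation at a general $y \in \til\ell$ should show that the vertical line $\cO_{\P^1}(-t\mu)|_y$ cannot be attained by this image, obstructing the freedom that $\til\cM^t$ would need to dominate $\P(\cE)$. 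Making this descent rigorous by controlling the extension class of $N_{\til\ell/\P(\cE)}$ is the delicate point; it is essentially the Grauert--M\"ulich descent argument adapted to dominating families of rational curves on Fano manifolds, and it constitutes the main technical hurdle of the proof.
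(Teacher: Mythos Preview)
Your overall strategy matches the paper's: invoke Remark~\ref{lem:a=bprelim} to reduce to showing that $\til\cM^t$ does not dominate $\P(\cE)$, and in the unisecant case read off $\beta$ by restriction to a general $\ell$. That part is fine, and your computation of $\beta$ agrees with the paper's.

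The gap is in ruling out the dominant case, and your own phrasing (``the main technical hurdle'') shows you sense it. The normal bundle sequence
\[
0 \to \cO_{\P^1}(-t\mu) \to N_{\til\ell/\P(\cE)} \to N_{\ell/X} \to 0
\]
is correct, but it governs \emph{all} first-order deformations of $\til\ell$ inside $\P(\cE)$, not just those coming from $\til\cM^t$; nothing prevents $N_{\til\ell/\P(\cE)}$ from being nef (for instance $\cO_{\P^1}^{\oplus 2}$ arises as a nontrivial extension of $\cO(2)$ by $\cO(-2)$). Your side observations via Proposition~\ref{prop:alpha=beta} and Lemma~\ref{lem:splitsection} are correct but do not close the argument. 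Note also that the vanishing $H^0(\cO(-t\mu))=0$ you invoke only needs $t\mu\ge 1$, yet the statement is sharp: for $t\mu=1$ the minimal sections over lines of $T_{\P^2}$ \emph{do} dominate $\P(T_{\P^2})$. So the hypothesis $t\mu\ge 2$ must enter elsewhere, and in your outline it does not.

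The paper works instead on the universal family. Since $\til\ev^t$ lifts $\ev^t$ through $\pi$, its differential induces a map of \emph{relative} tangent bundles
\[
(T_{\cU^t|X})_{|\ell}\longrightarrow (T_{\P(\cE)|X})_{|\til\ell}.
\]
The target is $\cO_{\P^1}(-t\mu)$ (this is exactly your $N_{\til\ell/\P(\cE_{|\ell})}$). The source is a direct sum of copies of $\cO_{\P^1}(-1)$: this is the standard descent-lemma input, coming from the fact that $d(\ev^t)_{|\ell}$ is the evaluation of global sections of the nef bundle $(T_X)_{|\ell}$. Now $\Hom(\cO(-1),\cO(-t\mu))=0$ precisely when $t\mu\ge 2$, so the relative differential vanishes, $d(\til\ev^t)$ is not generically surjective along $\ell$, and $\til\ev^t$ is not dominant. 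This is exactly where $\tau(\ell)>1/\mu$ is consumed, and it replaces the extension-class analysis you were reaching for.
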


\begin{proof} Set $t:=\tau(\ell)$,  $\ell\in\cM$ general, assume $t\geq 2/\mu$ and consider the corresponding families $\cM^t$ and $\til\cM^t$. We will prove that the evaluation morphism $\til\ev^t:\til\cU^t\cong\cU^t\to\P(\cE)$ is not dominant. Then, by Remark \ref{lem:a=bprelim},  the closure of $\loc(\til\cM^{t})$ will be a unisecant divisor, corresponding to an element
in
$\Hom_X(\cO_X(-b),\cE)$
for some $b$. Restricting to $\ell$ we see that $-b=
\big(\tau(\ell)+c_1\big)/2$. Moreover, since $\cM^t$ covers $X$, $H^0(X,\cE(b'))=0$ for $b'<b$. This concludes the proof.

In order to see that $\til\ev^t$ is not dominant, note that for the general
element $\ell$ of $\cM^t$, the restriction of the tangent bundle of $X$ to $\ell$ is nef. Since $d(\ev^t)_{|\ell}:(T_{\cU^t})_{\ell}\to (T_X)_{|\ell}$ is the evaluation of global sections of $(T_X)_{|\ell}$, it follows that its kernel satisfies
$(T_{\cU^t|X})_{|\ell}\cong\cO(-1)\oplus\dots\oplus\cO(-1)$.
On the other hand
$(T_{\P(\cE)|X})_{|\ell}\cong\cO(t\mu)$,
with $t\mu\geq 2$  by assumption, hence there are no nonzero morphisms from
$(T_{\cU^t|X})_{|\ell}$ to $(T_{\P(\cE)|X})_{|\ell}$ and $d(\til\ev^t)_{|\ell}$ is not generically surjective.
 \end{proof} 


\section{The pseudoeffective cone of $\P(\cE)$}\label{sec:posit}

In this section we will explore the relation between $\tau(\cE)$ and $\rho(\cE)$. Unless otherwise stated we will always assume the following:

\begin{set}\label{ass:sec4}
 $(X,\cE)$ will be as in Setup \ref{notn:setup} with $b_4(X)=1$. As described in \ref{ssec:setup} we will use the following equalities of numerical classes of cycles:
$c_2(\cE)=c_2 \Sigma$ and $H_X^2=d\, \Sigma$ with $c_2,d \in \Z$, $d>0$; $\Delta(\cE)=
(dc_1^2 -4c_2) \Sigma =: d\Delta \Sigma$ and $K_{\rel}^2=\Delta H^2$.
\end{set}

The main idea we will use is the following:
\begin{remark}\label{rem:idea3}
Pseudoeffective divisors have non negative intersection with movable classes of $1$-cycles, for instance with complete intersections of nef divisors. Thus we have the following restrictions:
\begin{equation}\label{eq:posit2}
(-K_{\rel}+\rho' H)\cdot(-K_{\rel}+\tau H)^{j}\cdot H^{n-j}\geq 0, \mbox{ for all }j\in\{0,\dots,n\}, \;\rho'\geq\rho.
\end{equation}

By the Chern-Wu relation $K_{\rel}^2=\Delta H^2$, the equality
$H^{n+1}=0$ and the inequality $-K_{\rel}H^n>0$, inequality (\ref{eq:posit2}) reduces, for each $j$, to
\begin{equation}\label{eq:aj}
\dfrac{(\rho'+\sqrt{\Delta})(\tau+\sqrt{\Delta})^j-(\rho'-\sqrt{\Delta})(\tau-\sqrt{\Delta})^j}{\sqrt{\Delta}}\geq 0,\;j\in\{0,\dots,n\}, \;\rho'\geq\rho
\end{equation}
if $\Delta\neq 0$, and to
$$
\tau^{j-1}(\tau+j\rho')\geq 0, \mbox{ for all }j\in\{0,\dots,n\}, \;\rho'\geq\rho
$$
if $\Delta=0$. In this case, applying Theorem \ref{prop:bound}, Lemma \ref{lem:estdelta=0} and Proposition \ref{prop:alpha=beta}, all the inequalities reduce to
\begin{equation}\label{eq:posit0n}
\tau+n\rho\geq 0.
\end{equation}
\end{remark}

\subsection{\bf The case $\Delta<0$.}\label{ssec:3<0}
Denoting by $\arg(z)\in\big[0,2\pi\big)$ the argument of a complex number $z\neq 0$, (\ref{eq:aj}) is, in this case, equivalent to:
\begin{equation}\label{eq:posit21}
\arg\big((\rho'+i\sqrt{-\Delta})(\tau+i\sqrt{-\Delta})^{j}\big)\leq\pi,\mbox{ for all }j\in\{0,\dots,n\}.
\end{equation}
Note that the inequality for $j=n$ implies the rest, then (\ref{eq:aj}) is equivalent to
\begin{equation}\label{eq:posit22}
\arg\big((\rho'+i\sqrt{-\Delta})(\tau+i\sqrt{-\Delta})^{n}\big)\leq\pi.
\end{equation}

Applying (\ref{eq:posit22}) above to $\rho'=\tau$, we get the following inequality, which can be read  as the positivity of certain Schur polynomials of the minimal nef twist of $\cE$, and that can be interpreted as a lower bound for $\tau$ in terms of the discriminant $\Delta$ and the invariants of $X$.

\begin{lemma}\label{lem:posit}
Let $(X,\cE)$ be as in Setup \ref{ass:sec4}. Then:
$$
c_1^2<\dfrac{4c_2}{d}\leq c_1^2+\tau^2\tan^2\left(\dfrac{\pi}{n+1}\right).
$$
\end{lemma}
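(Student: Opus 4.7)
The plan is to read off both inequalities directly from the positivity condition \eqref{eq:posit22}, noting that the lemma really packages two separate pieces of information: the first inequality is a tautological restatement of $\Delta<0$, while the second is the substantive content.

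First I would observe that $\Delta=c_1^2-4c_2/d$ by definition, so the hypothesis $\Delta<0$ of subsection \ref{ssec:3<0} is precisely $c_1^2<4c_2/d$, yielding the left inequality for free. The whole work lies in the right inequality, which amounts to showing $-\Delta\leq\tau^2\tan^2(\pi/(n+1))$.

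For that, I would specialize \eqref{eq:posit22} to $\rho'=\tau$, which is a legitimate choice since by definition $\rho\leq\tau$. This collapses the expression to a single power:
$$\arg\bigl((\tau+i\sqrt{-\Delta})^{n+1}\bigr)\leq \pi,$$
that is, $(n+1)\arg(\tau+i\sqrt{-\Delta})\leq\pi$. Before converting the argument into an arctangent I would dispose of the edge case $\tau=0$: by Theorem \ref{prop:bound}, $\tau=0$ forces $\cE$ trivial and hence $\Delta=0$, contradicting $\Delta<0$; so $\tau>0$ and the complex number $\tau+i\sqrt{-\Delta}$ lies strictly in the open first quadrant. Hence the argument equals $\arctan(\sqrt{-\Delta}/\tau)$, and the inequality reads $\arctan(\sqrt{-\Delta}/\tau)\leq\pi/(n+1)$. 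Applying $\tan$ (monotone on $[0,\pi/2)$) gives $\sqrt{-\Delta}/\tau\leq\tan(\pi/(n+1))$, i.e.\ $-\Delta\leq \tau^2\tan^2(\pi/(n+1))$, which after substituting $-\Delta=4c_2/d-c_1^2$ is the desired bound.

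There is no real obstacle; the only subtlety is making sure the conversion from $\arg$ to $\arctan$ is valid, which is why I would handle the $\tau=0$ case separately using Theorem \ref{prop:bound}. Everything else is a direct transcription of \eqref{eq:posit22} at $\rho'=\tau$ together with the Chern--Wu identification $-\Delta=4c_2/d-c_1^2$.
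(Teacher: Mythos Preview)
Your proposal is correct and is exactly the paper's approach: the paper's entire proof is the one-sentence remark that one applies \eqref{eq:posit22} at $\rho'=\tau$, and you have simply unpacked that sentence. The only point you might tighten is the passage from $\arg\bigl((\tau+i\sqrt{-\Delta})^{n+1}\bigr)\le\pi$ to $(n+1)\arg(\tau+i\sqrt{-\Delta})\le\pi$, which strictly speaking requires the full family \eqref{eq:posit21} for $j=0,\dots,n$ (to rule out wrap-around modulo $2\pi$); this is available to you and the paper glosses over it as well.
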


We obtain stronger restrictions by applying equation (\ref{eq:posit22}) to $\rho'=\rho$. However, it is not clear whether $\rho<\tau$. We already know this occurs when $\cE$ is not semistable (see Theorem \ref{prop:bound} and Proposition \ref{prop:alpha=beta}) The next proposition shows that this is also the case if $\tau$ is rational and $n\neq 2,3,5$.

\begin{proposition}\label{prop:notbig}
Let $(X,\cE)$ be as in Setup \ref{ass:sec4}, $\cE$ non trivial. If $\tau\in\Q$, then $-K_{\rel}+\tau H$ is big unless $\Delta<0$ and $n=2,3$ and $5$. In particular $\rho<\tau$.
\end{proposition}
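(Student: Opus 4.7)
My plan is to reduce bigness of the nef divisor $D:=-K_{\rel}+\tau H$ to the strict positivity of its top self-intersection on $\P(\cE)$. Using $H^{n+1}=0$, the Chern--Wu relation $K_{\rel}^2=\Delta H^2$, and $-K_{\rel}\cdot H^n=2H_X^n$, the only intersection numbers that survive in the binomial expansion of $D^{n+1}$ are $(-K_{\rel})^{2m+1}\cdot H^{n-2m}=2\Delta^m H_X^n$, which gives
$$
D^{n+1}=2H_X^n\sum_{m=0}^{\lfloor n/2\rfloor}\binom{n+1}{2m+1}\tau^{n-2m}\Delta^m=\frac{H_X^n}{\sqrt{\Delta}}\Bigl[(\tau+\sqrt{\Delta})^{n+1}-(\tau-\sqrt{\Delta})^{n+1}\Bigr].
$$
If $\Delta\geq 0$ the polynomial form shows $D^{n+1}>0$, because $\tau>0$ by Theorem \ref{prop:bound} and the nontriviality of $\cE$, so $D$ is big and we are done.

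Suppose now $\Delta<0$. Writing $\tau+i\sqrt{-\Delta}=re^{i\theta}$ with $\theta=\arctan(\sqrt{-\Delta}/\tau)\in(0,\pi/2)$, the closed formula becomes
$$
D^{n+1}=\frac{2H_X^n\,r^{n+1}}{\sqrt{-\Delta}}\sin\bigl((n+1)\theta\bigr).
$$
Nefness of $D$ amounts to inequality (\ref{eq:posit22}) of Remark \ref{rem:idea3} applied with $\rho'=\tau$, i.e.\ $(n+1)\theta\leq\pi$, so the only way for $D$ to fail to be big is the equality $(n+1)\theta=\pi$. Squaring $\tan\theta=\sqrt{-\Delta}/\tau$ this is equivalent to
$$
\tan^2\!\Bigl(\frac{\pi}{n+1}\Bigr)=-\frac{\Delta}{\tau^2}.
$$

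Since $\tau\in\Q$ by hypothesis and $\Delta=c_1^2-4c_2/d\in\Q$, the right-hand side is rational, so the failure of bigness forces $\tan^2(\pi/(n+1))\in\Q$. I then invoke a well-known consequence of Niven's theorem on rational values of trigonometric functions at rational multiples of $\pi$: for an integer $m\geq 3$, $\tan^2(\pi/m)$ is rational only for $m\in\{3,4,6\}$ (for any other $m$ an elementary argument using the minimal polynomial of $\zeta_{2m}+\zeta_{2m}^{-1}$ over $\Q$ shows $\tan^2(\pi/m)$ to be a nonrational algebraic number). This leaves $n+1\in\{3,4,6\}$, i.e.\ $n\in\{2,3,5\}$, as the only possible exceptional cases.

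For the final assertion, observe that by the very definition of $\rho$, a nef divisor on $\P(\cE)$ is big if and only if it lies strictly past the pseudoeffective threshold; thus $-K_{\rel}+\tau H$ being big immediately implies $\tau>\rho$. The only nontrivial step I foresee is the number-theoretic one (classifying $m$ with $\tan^2(\pi/m)\in\Q$); the rest of the argument is a formal intersection-theoretic computation governed by the Chern--Wu relation.
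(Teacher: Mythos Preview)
Your proof is correct and follows essentially the same route as the paper: both reduce the question to the vanishing of $(-K_{\rel}+\tau H)^{n+1}$, use the Chern--Wu relation to rewrite this in the closed form involving $(\tau\pm\sqrt{\Delta})^{n+1}$, and then invoke Niven's result on the rationality of $\tan^2(\pi/(n+1))$ to force $n\in\{2,3,5\}$. The only minor organizational difference is that the paper disposes of the case $\Delta\geq 0$ indirectly (not big $\Rightarrow \rho=\tau\geq 0 \Rightarrow \cE$ semistable $\Rightarrow \Delta\leq 0$), whereas you read off $D^{n+1}>0$ directly from the polynomial expansion; your route is slightly cleaner here but not substantively different.
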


\begin{proof}
Assume that $-K_{\rel}+\tau H$ is not big. On one hand we get $\rho=\tau$, hence $\rho \ge 0$ by Theorem \ref{prop:bound}, so $\cE$ is semistable by Proposition \ref{prop:alpha=beta} and $\Delta\leq 0$. On the other we obtain an equality $(-K_{\rel}+\tau H)^{n+1}=0$ that, arguing as above, leads us to $\tau=0$, and to the triviality of $\cE$ by Theorem \ref{prop:bound}, if $\Delta=0$.

If $\Delta<0$ then we get:
$$
\tan^2\left(\dfrac{\pi}{n+1}\right)=\frac{-\Delta}{\tau^2}\in\Q.
$$
The algebraic degree of $\tan\left(\pi/(n+1)\right)$ over $\Q$ is classically known (see \cite[pp. 33-41]{Ni}, see also \cite[Prop.~2]{Ca}), and  one may check directly that the only possible values of $n$ are $2,3$ or $5$.
 \end{proof}

\subsection{\bf The case $\Delta \ge 0$.}\label{ssec:3>0}

In this case $\cE$ is not semistable, unless $\cE \simeq \cO_X \oplus \cO_X$ (cf. Lemma \ref{lem:estdelta=0}),
hence, by Proposition \ref{prop:alpha=beta} and Remark \ref{rem:alphanotinteger}, we know that $\rho=  2\beta+c_1 \le 0$
and $c_2(\cE( \beta))\geq 0$. In particular, since the numerical classes $c_2(\cE(k))=(c_2+dkc_1+dk^2)\Sigma$ and $c_2(\cE(-c_1-k))$ are equal, we get:
\begin{equation}\label{eq:m+alpha}
\rho=  2\beta+c_1\leq-\sqrt{\Delta}
\end{equation}

\begin{remark}\label{rem:extremedelta2}
Equality in (\ref{eq:m+alpha}) holds if and only if $\cE$ is a direct sum of line bundles.
In fact if equality holds then $c_2(\cE( \beta))=0$ and $\cE$ splits.
The converse follows from a direct computation.
\end{remark}

On the other hand, the set of restrictions (\ref{eq:aj}) provide the following:

\begin{lemma}\label{lem:posit4}
Let $(X,\cE)$ be as in Setup \ref{ass:sec4} with $\Delta \ge 0$. Then
\begin{eqnarray}\vspace{0.2cm}
&& \tau\geq\sqrt{\Delta},\quad\mbox{and} \label{eq:6}\\
&& -\sqrt{\Delta}-\varepsilon\leq \rho\leq-\sqrt{\Delta}, \label{eq:7}
\end{eqnarray}
where
$$\varepsilon= \dfrac{2\sqrt{\Delta}\big(\tau-\sqrt{\Delta}\big)^n}
{\big(\tau+\sqrt{\Delta}\big)^n-\big(\tau-\sqrt{\Delta}\big)^n}\quad  \text{if}\;\; \Delta >0 \quad \text{and} \quad
\varepsilon= \dfrac{\tau}{n} \quad\text{if}\;\; \Delta =0
$$
\end{lemma}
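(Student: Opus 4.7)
The upper bound $\rho\leq-\sqrt{\Delta}$ in \eqref{eq:7} is already at hand: inequality \eqref{eq:m+alpha}, which was derived immediately before the statement from Proposition~\ref{prop:alpha=beta}, Remark~\ref{rem:alphanotinteger} and Lemma~\ref{lem:estdelta=0}, gives exactly this bound. The remaining two inequalities will be obtained by specializing the family \eqref{eq:aj} (valid for $\rho'=\rho$ and every $j\in\{0,\dots,n\}$, since $-K_{\rel}+\rho H$ is pseudoeffective) at two well chosen values of $j$.

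First I would establish \eqref{eq:6} by plugging $j=1$ and $\rho'=\rho$ into \eqref{eq:aj}. After expanding and cancelling, the inequality collapses to $2(\rho+\tau)\geq 0$, so $\tau\geq -\rho$. Combined with the upper bound $\rho\leq-\sqrt{\Delta}$ already noted, this yields $\tau\geq\sqrt{\Delta}$, as claimed.

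For the lower bound on $\rho$ in \eqref{eq:7}, assume first $\Delta>0$ and substitute $j=n$, $\rho'=\rho$ into \eqref{eq:aj}. Isolating the terms that are linear in $\rho$, the inequality reads
$$
\rho\bigl[(\tau+\sqrt{\Delta})^n-(\tau-\sqrt{\Delta})^n\bigr]\geq -\sqrt{\Delta}\bigl[(\tau+\sqrt{\Delta})^n+(\tau-\sqrt{\Delta})^n\bigr].
$$
Since \eqref{eq:6} gives $\tau\geq\sqrt{\Delta}>0$, the bracket multiplying $\rho$ is strictly positive, so I can divide through. A direct manipulation (adding $\sqrt{\Delta}$ to both sides and reducing to common denominator) shows that the resulting right-hand side equals $-\sqrt{\Delta}-\varepsilon$ with $\varepsilon$ as defined in the statement. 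For $\Delta=0$, the general inequalities \eqref{eq:aj} degenerate to \eqref{eq:posit0n}, which reads $\tau+n\rho\geq 0$; this gives precisely $\rho\geq -\tau/n=-\varepsilon$, matching the claim in that regime.

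There is no serious obstacle: once Remark~\ref{rem:idea3} is in place, the proof is purely algebraic. The only conceptual point worth flagging is the choice $j=n$ for the sharpest lower bound on $\rho$. This is optimal because, setting $r=(\tau-\sqrt{\Delta})/(\tau+\sqrt{\Delta})\in[0,1)$, the bound produced by index $j$ is controlled by the ratio $(1+r^j)/(1-r^j)$, which is monotonically decreasing in $j$; hence $j=n$ yields the tightest estimate among those provided by the pseudoeffectivity inequalities \eqref{eq:aj}.
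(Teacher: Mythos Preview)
Your proof is correct and follows essentially the same route as the paper: both arguments take the upper bound $\rho\le-\sqrt{\Delta}$ from \eqref{eq:m+alpha}, extract $\tau\ge\sqrt{\Delta}$ from the $j=1$ case of \eqref{eq:aj}, and then obtain the lower bound on $\rho$ from the $j=n$ case (with the $\Delta=0$ case handled via \eqref{eq:posit0n}). Your direct rewriting of the $j=1$ inequality as $\tau+\rho\ge0$ is a bit cleaner than the paper's contradiction argument, and your closing monotonicity remark justifying the choice $j=n$ is a nice addition not present in the original.
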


\begin{proof} With the same notation as in Remark \ref{rem:idea3}, if $\Delta >0$ inequalities (\ref{eq:aj}) give
$$
(\rho+\sqrt{\Delta})(\tau+\sqrt{\Delta})^j\geq(\rho-\sqrt{\Delta})(\tau-\sqrt{\Delta})^j\mbox{, for all }j\leq n.
$$
By (\ref{eq:m+alpha}), $\rho+\sqrt{\Delta}\leq 0$, therefore $\rho-\sqrt{\Delta}<\rho+\sqrt{\Delta}\leq 0$, and the previous inequality tells us in particular that $\tau\geq\sqrt{\Delta}$: otherwise
$$
(\rho+\sqrt{\Delta})(\tau+\sqrt{\Delta})\leq(\rho+\sqrt{\Delta})(\tau-\sqrt{\Delta})<(\rho-\sqrt{\Delta})(\tau-\sqrt{\Delta}).
$$

Finally, since $\tau-\sqrt{\Delta}\geq 0\geq \rho+\sqrt{\Delta}$, the set of inequalities in (\ref{eq:aj}) reduces to the one given obtained for $j=n$
and a simple computation concludes the proof.

If $\Delta =0$ then (\ref{eq:6}) follows from Theorem \ref{prop:bound} and   (\ref{eq:7}) is a restatement of (\ref{eq:posit0n}).  \end{proof}

\begin{remark}\label{rem:extremedelta}
Equality in (\ref{eq:6}) holds if and only if $\cE$ is a direct sum of line bundles.
In fact if $\tau=\sqrt{\Delta} >0$, then $\varepsilon=0$, equality holds in (\ref{eq:m+alpha}) and $\cE$ splits by Remark \ref{rem:extremedelta2}.
If else $\tau=\Delta=0$, the same result follows from Theorem \ref{prop:bound}.
The converse follows from a direct computation.
\end{remark}

Bounding $\varepsilon\leq\varepsilon':=(\tau-\sqrt{\Delta})/n$ we obtain the following inequality:

\begin{corollary}\label{cor:boundeps}
Let $(X,\cE)$ be as in Setup \ref{ass:sec4} with $\Delta\geq 0$. Then:
\begin{equation}
\tau+n\rho+(n-1)\sqrt{\Delta}\geq 0
\end{equation}
\end{corollary}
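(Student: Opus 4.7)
The plan is to read off Corollary 4.5 from Lemma 4.4 by bounding the quantity $\varepsilon$ appearing in the estimate $\rho\ge -\sqrt{\Delta}-\varepsilon$ of Lemma 4.4 by the simpler quantity $\varepsilon':=(\tau-\sqrt{\Delta})/n$, exactly as suggested in the sentence preceding the statement. This reduces everything to a single algebraic inequality comparing $\varepsilon$ with $\varepsilon'$, after which the corollary is a one-line rearrangement.

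More precisely, first I would dispose of the degenerate case $\Delta=0$ on its own: there Lemma 4.4 directly gives $\rho\ge -\tau/n$, i.e.\ $\tau+n\rho\ge 0$, which is the claim since $\sqrt{\Delta}=0$. For $\Delta>0$ the bound $\rho\ge -\sqrt{\Delta}-\varepsilon$ from Lemma 4.4 yields
\[
n\rho\ \ge\ -n\sqrt{\Delta}-n\varepsilon,
\]
so I only need $n\varepsilon \le \tau-\sqrt{\Delta}$; once this is established, adding $\tau+(n-1)\sqrt{\Delta}$ to both sides of the previous inequality gives the desired $\tau+n\rho+(n-1)\sqrt{\Delta}\ge 0$.

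The only substantive step is therefore showing $\varepsilon\le \varepsilon'$ when $\Delta>0$. With $a:=\tau+\sqrt{\Delta}$ and $b:=\tau-\sqrt{\Delta}$, we have $a\ge b\ge 0$ by Lemma 4.4(4.4), and the explicit formula for $\varepsilon$ rewrites the claim as
\[
n(a-b)b^{n-1}\ \le\ a^n-b^n.
\]
Factoring the right-hand side as $(a-b)\sum_{k=0}^{n-1}a^k b^{n-1-k}$ and comparing term by term against $n\,b^{n-1}=\sum_{k=0}^{n-1}b^{n-1}$ using $a\ge b\ge 0$ settles this at once. I do not expect any genuine obstacle here: the whole proof is an algebraic unwinding of Lemma 4.4, and the main (minor) care point is keeping track of signs to make sure the inequality $\varepsilon\le\varepsilon'$ is applied in the correct direction in the estimate for $\rho$.
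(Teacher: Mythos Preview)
Your proposal is correct and follows exactly the route the paper takes: the paper's entire argument is the sentence ``Bounding $\varepsilon\leq\varepsilon':=(\tau-\sqrt{\Delta})/n$'' preceding the statement, and you have simply supplied the details of that bound (including the easy $\Delta=0$ case) and the one-line rearrangement. Your verification of $\varepsilon\le\varepsilon'$ via $n(a-b)b^{n-1}\le a^n-b^n$ is fine and slightly more explicit than what the paper records.
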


Finally we obtain the following two splitting criteria

\begin{corollary}\label{cor:posit4}
Let $(X,\cE)$ be as in Setup \ref{ass:sec4} with $\Delta \ge 0$, and consider the following interval:
$$I=\left[-\dfrac{c_1+\sqrt{\Delta}}{2}-\dfrac{\varepsilon'}{2},-\dfrac{c_1+\sqrt{\Delta}}{2}\right].$$
Then:
\begin{itemize}
 \item $I\cap\Z\neq\emptyset$, and
 \item if $I\cap\Z=-(c_1+\sqrt{\Delta})/2$, then $\cE$ splits as a direct sum of line bundles.
\end{itemize}
\end{corollary}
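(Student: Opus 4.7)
The plan is to read off the position of $\beta$ inside $I$ from the inequalities that pin down $\rho$. Concretely, I would combine Proposition \ref{prop:alpha=beta} (which, together with Remark \ref{rem:alphanotinteger}, identifies $\rho$ with $2\beta+c_1$) with the two-sided bounds on $\rho$ coming from Lemma \ref{lem:posit4} and the bound $\varepsilon\leq\varepsilon'$ already used in Corollary \ref{cor:boundeps}; the splitting assertion will then be a direct application of Remark \ref{rem:extremedelta2}.

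First I would dispose of the trivial case: if $\cE\cong\cO_X^{\oplus 2}$, then $c_1=\Delta=\tau=0$, so $I=\{0\}=\{-(c_1+\sqrt{\Delta})/2\}$ and both bullets hold. From now on I may assume $\cE$ is not trivial, so by Lemma \ref{lem:estdelta=0} (together with $\Delta\geq 0$) it is not semistable, and Proposition \ref{prop:alpha=beta} gives $\rho=2\beta+c_1$. Chaining Lemma \ref{lem:posit4} with $\varepsilon\leq\varepsilon'=(\tau-\sqrt{\Delta})/n$ I obtain
$$
-\sqrt{\Delta}-\varepsilon'\;\leq\;\rho\;\leq\;-\sqrt{\Delta},
$$
and substituting $\rho=2\beta+c_1$ and dividing by $2$ yields $\beta\in I$; since $\beta\in\Z$, the first bullet is proved. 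If moreover $I\cap\Z=\{-(c_1+\sqrt{\Delta})/2\}$, then $\beta$ must equal this endpoint, forcing $\rho=-\sqrt{\Delta}$, and Remark \ref{rem:extremedelta2} makes $\cE$ split as a sum of line bundles.

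The only calculation hidden in the argument is $\varepsilon\leq\varepsilon'$, which for $\Delta=0$ is a tautology and for $\Delta>0$ reduces, after writing $a=\tau+\sqrt{\Delta}$ and $b=\tau-\sqrt{\Delta}\geq 0$ (using (\ref{eq:6})) and clearing the positive denominator, to the elementary inequality $nb^{n-1}\leq\sum_{k=0}^{n-1}a^{n-1-k}b^{k}$, which is immediate since $a\geq b\geq 0$ makes each summand at least $b^{n-1}$. I do not expect a genuine obstacle: the whole argument is bookkeeping on material already in the excerpt, and the only point requiring care is isolating the trivial case cleanly before invoking Proposition \ref{prop:alpha=beta}, so that the identity $\rho=2\beta+c_1$ is available.
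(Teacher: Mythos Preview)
Your argument is correct and is exactly the deduction the paper intends: the corollary is stated there without proof, as an immediate consequence of the identification $\rho=2\beta+c_1$ (Proposition \ref{prop:alpha=beta}/Remark \ref{rem:alphanotinteger}), the two-sided bound $-\sqrt{\Delta}-\varepsilon\leq\rho\leq-\sqrt{\Delta}$ of Lemma \ref{lem:posit4}, the estimate $\varepsilon\leq\varepsilon'$ of Corollary \ref{cor:boundeps}, and Remark \ref{rem:extremedelta2}. One cosmetic point: when $\Delta>0$ the non-semistability comes from the Bogomolov inequality rather than from Lemma \ref{lem:estdelta=0} itself (which handles only $\Delta=0$); the paper invokes both in tandem elsewhere, so you may want to cite them together.
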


\begin{corollary}\label{cor:numsplit}
Let $(X,\cE)$ be as in Setup \ref{ass:sec4} with $\Delta \geq 0$. If $-(c_1+\sqrt{\Delta})/2 \in \Z$ and
\begin{equation*}\label{eq:numsplit2}
\tau< 2n+\sqrt{\Delta},
\end{equation*}
then $\cE$ is decomposable.
\end{corollary}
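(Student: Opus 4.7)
The plan is to deduce this directly from Corollary \ref{cor:posit4}, of which this is essentially a quantitative repackaging. Recall that the interval appearing there is
$$I=\left[-\dfrac{c_1+\sqrt{\Delta}}{2}-\dfrac{\varepsilon'}{2},\;-\dfrac{c_1+\sqrt{\Delta}}{2}\right],$$
whose length is $\varepsilon'/2=(\tau-\sqrt{\Delta})/(2n)$.

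First I would observe that the hypothesis $\tau<2n+\sqrt{\Delta}$ is exactly the statement that $\varepsilon'/2<1$, i.e.\ that $I$ has length strictly less than one. Since by hypothesis the right endpoint $-(c_1+\sqrt{\Delta})/2$ of $I$ is itself an integer, an interval of length smaller than one whose right endpoint is an integer cannot contain any other integer. Hence
$$I\cap\Z=\left\{-\dfrac{c_1+\sqrt{\Delta}}{2}\right\}.$$

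Next I would invoke the second bullet of Corollary \ref{cor:posit4}, which under exactly this condition guarantees that $\cE$ splits as a direct sum of line bundles, and in particular is decomposable. Since the hypothesis $\Delta\geq 0$ carries over unchanged, this completes the argument.

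There is no genuine obstacle here; the only point requiring a bit of care is to remember that the hypothesis $-(c_1+\sqrt{\Delta})/2\in\Z$ is needed to ensure that the right endpoint of $I$ actually realizes the intersection $I\cap\Z$, so that the length estimate can force uniqueness. Everything else is a direct application of the preceding corollary.
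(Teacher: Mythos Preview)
Your proof is correct and follows essentially the same approach as the paper's: both reduce the claim to Corollary \ref{cor:posit4} by observing that the hypothesis $\tau<2n+\sqrt{\Delta}$ is equivalent to $\varepsilon'<2$, which forces $I\cap\Z=\{-(c_1+\sqrt{\Delta})/2\}$. The paper additionally remarks that $-(c_1+\sqrt{\Delta})/2\in\Z$ is equivalent to $\sqrt{\Delta}\in\Z$, but this is merely an observation about the hypothesis, not an extra step in the argument.
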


\begin{proof}
It is immediate to see that $\sqrt{\Delta}$ is an integer if and only if $-(c_1+\sqrt{\Delta})/2\in\Z$; then by Corollary \ref{cor:posit4} it suffices to check that $\varepsilon'<2$.
 \end{proof}

We finish this section discussing the effects on $\cEff{\P(\cE)}$ of assuming that $-K_{\rel}+\tau H$ is semiample. More concretely, we will prove that if the associate contraction is of fiber type or divisorial, then $\rho$ is completely determined from the rest of the invariants of $(X,\cE)$. We will make use of the following observation on the set of inequalities (\ref{eq:posit2}), whose proof follows from a straightforward analysis of the different forms that (\ref{eq:posit2}) takes according to the sign of $\Delta$.

\begin{lemma}\label{lem:descend}
Let $(X,\cE)$ be as in Setup \ref{ass:sec4} and assume that $\cE$ is indecomposable. Then
$$
(-K_{\rel}+\rho' H)\cdot(-K_{\rel}+\tau H)^{j}\cdot H^{n-j}>0,
$$
for every $\rho' \geq \rho$ unless $(j,\rho')=(n,\rho)$.
\end{lemma}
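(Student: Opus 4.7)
The plan is to carry out the three--case analysis suggested by Remark~\ref{rem:idea3} (according to the sign of $\Delta$), proving in each case that the only possible equality among the non--negative intersection numbers occurs at $(j,\rho')=(n,\rho)$. Throughout, I will use the formal diagonalization of the Chern--Wu relation $K_{\rel}^{2}=\Delta H^{2}$ to compute
$$
g(j,\rho'):=(-K_{\rel}+\rho' H)\cdot(-K_{\rel}+\tau H)^{j}\cdot H^{n-j},
$$
together with $H^{n+1}=0$ and $-K_{\rel}\cdot H^{n}=2H_X^{n}>0$, to obtain a closed expression in $\tau,\rho',\Delta$ up to a positive constant. The case $j=0$ is immediate, since then $g(0,\rho')=-K_{\rel}\cdot H^{n}>0$ regardless of $\rho'$, so I may restrict to $j\ge 1$ in what follows.

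The case $\Delta=0$ is the warm--up. A direct expansion gives $g(j,\rho')=\tau^{j-1}(\tau+j\rho')\cdot(-K_{\rel}\cdot H^{n})$. Since $\cE$ is indecomposable, Theorem~\ref{prop:bound} gives $\tau>0$ and Lemma~\ref{lem:estdelta=0} together with Proposition~\ref{prop:alpha=beta} gives $\rho<0$; moreover (\ref{eq:posit0n}) says $\tau+n\rho\ge 0$. For $\rho'>\rho$ or $j<n$ we then have $\tau+j\rho'>\tau+n\rho\ge 0$ strictly, so $g>0$. The case $\Delta>0$ is the main case. Using the eigenvalue decomposition induced by $K_{\rel}^{2}=\Delta H^{2}$, one checks that $g(j,\rho')$ is a positive multiple of
$$
f(j,\rho'):=(\rho'+\sqrt{\Delta})(\tau+\sqrt{\Delta})^{j}-(\rho'-\sqrt{\Delta})(\tau-\sqrt{\Delta})^{j}.
$$
Indecomposability forces strict inequalities $\tau>\sqrt{\Delta}$ (Remark~\ref{rem:extremedelta}) and $\rho+\sqrt{\Delta}<0$ (Remark~\ref{rem:extremedelta2}). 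The function $f$ is affine in $\rho'$ with slope $(\tau+\sqrt{\Delta})^{j}-(\tau-\sqrt{\Delta})^{j}>0$ for $j\ge 1$, so $f$ is strictly increasing in $\rho'$; this already disposes of every $\rho'>\rho$. It remains to handle $\rho'=\rho$ and $j<n$: writing $r:=(\tau-\sqrt{\Delta})/(\tau+\sqrt{\Delta})\in(0,1)$ and $\lambda:=(\rho+\sqrt{\Delta})/(\rho-\sqrt{\Delta})\in(0,1)$, the inequality $f(j,\rho)\ge 0$ becomes $\lambda\le r^{j}$; but $r^{j}>r^{n}\ge\lambda$ for $j<n$, so $f(j,\rho)>0$ strictly.

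The case $\Delta<0$ proceeds along the same lines but with a complex reformulation. The same diagonalization trick identifies $g(j,\rho')$ with a positive multiple of $\Im\bigl[(\rho'+i\sqrt{-\Delta})(\tau+i\sqrt{-\Delta})^{j}\bigr]$, which is positive precisely when the argument of the bracketed complex number lies in $(0,\pi)$; positivity of the argument is automatic since $\tau>0$ (Theorem~\ref{prop:bound}) and $\sqrt{-\Delta}>0$. Because $\arg(\tau+i\sqrt{-\Delta})\in(0,\pi/2)$ is nonzero, the argument of the product is strictly increasing in $j$; similarly, as $\rho'$ increases, $\arg(\rho'+i\sqrt{-\Delta})$ strictly decreases, hence so does the argument of the product. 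The hypothesis $g(n,\rho)\ge 0$, i.e. (\ref{eq:posit22}), is exactly $\arg\le\pi$ at $(n,\rho)$; by strict monotonicity, any $(j,\rho')\ne(n,\rho)$ with $j\le n$ and $\rho'\ge\rho$ yields $\arg<\pi$ strictly, and hence $g>0$.

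The main obstacle will be bookkeeping the signs in the case $\Delta>0$: one has to keep track of the sign of $\rho'\pm\sqrt{\Delta}$ (in particular noting that $\rho'+\sqrt{\Delta}$ may switch sign as $\rho'$ ranges over $[\rho,\infty)$), and to extract strict monotonicity from the \emph{strict} inequalities $\tau>\sqrt{\Delta}$ and $\rho+\sqrt{\Delta}<0$ provided by the indecomposability hypothesis via Remarks~\ref{rem:extremedelta} and~\ref{rem:extremedelta2}. Once those strict inequalities are in place, the monotonicity arguments above yield the result uniformly across the three sign regimes of $\Delta$.
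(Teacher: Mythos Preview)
Your proposal is correct and follows exactly the approach indicated in the paper, which merely states that the result ``follows from a straightforward analysis of the different forms that (\ref{eq:posit2}) takes according to the sign of $\Delta$''; you have carried out that case analysis in detail, using the strict inequalities $\tau>\sqrt{\Delta}$ and $\rho<-\sqrt{\Delta}$ (resp.\ $\tau>0$, $\rho<0$, and the argument monotonicity) furnished by indecomposability to upgrade the weak inequalities of Remark~\ref{rem:idea3} to strict ones away from $(j,\rho')=(n,\rho)$.
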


\begin{proposition}\label{prop:secondray}
Let $(X,\cE)$ be as in Setup \ref{ass:sec4}, with $\cE$ indecomposable. Assume further that
there exists a morphism $\f:\P(\cE) \to Y$ contracting the second extremal ray $R_2$ of $\P(\cE)$. Then:
\begin{itemize}
\item If $\f$ is of fiber type, then  $\cE$ is stable, $\rho=\tau$ and $n=\dim Y=2,3$ or $5$.
\item If $\f$ is divisorial, then $\dim \f(\Exc(\f))=n-1$ and $\rho$ is determined by $\tau$ and $\Delta$. In particular, if $\Delta <0$  then
\begin{equation}\label{eq:schr2}
\arg\big((\rho+i\sqrt{-\Delta})(\tau+i\sqrt{-\Delta})^{n}\big)=\pi.
\end{equation}
\end{itemize}
\end{proposition}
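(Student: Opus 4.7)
The plan is to analyze the two cases separately, observing first that in both $\f^{*}A\equiv\lambda(-K_{\rel}+\tau H)$ for some ample $A$ on $Y$ and some $\lambda>0$; writing this identity in the $\Z$-basis $\{-K_{\rel},H\}$ of $N^{1}(\P(\cE))$ gives $\tau\in\Q$.

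For the fiber type case, $\f^{*}A$ is nef but not big, so $\rho=\tau$. Since $\cE$ is indecomposable (hence nontrivial), Theorem \ref{prop:bound} gives $\tau>0$, so $\rho>0$, and Proposition \ref{prop:alpha=beta} together with Remark \ref{rem:alphanotinteger} forces $\cE$ to be stable. Applying Proposition \ref{prop:notbig} to $\tau\in\Q$ and $-K_{\rel}+\tau H$ not big yields $\Delta<0$ and $n\in\{2,3,5\}$. To get $\dim Y=n$, observe first that every fibre $f$ of $\pi$ satisfies $(-K_{\rel}+\tau H)\cdot f=2>0$, so $\pi$-fibres are not contracted. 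A direct computation using $K_{\rel}^{2}=\Delta H^{2}$ and $H^{n+1}=0$ turns $(-K_{\rel}+\tau H)^{n+1}=0$ into $(\tau+\sqrt{\Delta})^{n+1}=(\tau-\sqrt{\Delta})^{n+1}$; since $\Delta\neq 0$, this forces $(\tau+\sqrt{\Delta})^{n}\neq(\tau-\sqrt{\Delta})^{n}$, i.e. $(-K_{\rel}+\tau H)^{n}\cdot H\neq 0$. Hence $\f^{*}(A^{n})\neq 0$, so $\dim Y\geq n$; combined with $\dim Y\leq n$ (fiber type), we conclude $\dim Y=n$.

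For the divisorial case set $D:=-K_{\rel}+\tau H$ and $E:=\Exc(\f)$. Since $\f$ is a birational Mori contraction, $\f_{*}\cO_{\P(\cE)}=\cO_{Y}$ and $E$ is rigid: $h^{0}(\cO(kE))=1$ for every $k\geq 0$, so $E$ is not big and lies on the boundary of $\cEff{\P(\cE)}$. As $\P(\cE)$ has Picard number two, this boundary consists of the two extremal rays spanned by $H$ and by $-K_{\rel}+\rho H$. The class of $E$ cannot be proportional to $H$: any $R_{2}$-curve $C\subset E$ has $H\cdot C>0$ (since it is not a $\pi$-fibre) while $E\cdot C<0$ because $E$ is the exceptional divisor of a divisorial contraction. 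Hence $E\equiv c(-K_{\rel}+\rho H)$ with $c>0$. Now $\dim \f(E)\leq n-1$ implies $(D|_{E})^{n}=0$ in the Chow group of $E$, i.e. $D^{n}\cdot E=0$, which via the identity above becomes
$$(-K_{\rel}+\tau H)^{n}\cdot(-K_{\rel}+\rho H)=0,$$
the equality case of (\ref{eq:aj}) at $j=n$, $\rho'=\rho$. Via the Chern--Wu relation and $H^{n+1}=0$ this is an explicit polynomial equation determining $\rho$ from $\tau$ and $\Delta$; when $\Delta<0$ it reduces to the vanishing of the imaginary part of $(\rho+i\sqrt{-\Delta})(\tau+i\sqrt{-\Delta})^{n}$, and combining with the bound (\ref{eq:posit22}) and $\arg(\rho+i\sqrt{-\Delta})\in(0,\pi)$ pins the argument of this product to exactly $\pi$, yielding (\ref{eq:schr2}). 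Finally, if $\dim \f(E)\leq n-2$ the same reasoning would give $D^{n-1}\cdot E\cdot H=0$, i.e. the equality case of (\ref{eq:aj}) at $(j,\rho')=(n-1,\rho)$, contradicting the strict positivity guaranteed by Lemma \ref{lem:descend} when $\cE$ is indecomposable; hence $\dim \f(E)=n-1$.

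The hard part will be the divisorial case: correctly identifying $[E]$ as the generator of the extremal ray of $\cEff{\P(\cE)}$ spanned by $-K_{\rel}+\rho H$, which requires the rigidity of the exceptional divisor together with the sign $E\cdot C<0$ for $C\in R_{2}$, and then faithfully converting the numerical identity $D^{n}\cdot E=0$ into the trigonometric equation (\ref{eq:schr2}) through the Chern--Wu relation and (\ref{eq:posit22}).
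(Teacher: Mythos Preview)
Your proof is correct and follows essentially the same approach as the paper's. The only differences are presentational: for $\dim Y=n$ in the fiber type case the paper simply invokes Lemma~\ref{lem:descend} to get $(-K_{\rel}+\tau H)^{n}\cdot H>0$, whereas you recompute this directly; and in the divisorial case you supply extra detail (rigidity of $E$ and the sign $E\cdot C<0$) to exclude $E\equiv cH$, a step the paper leaves implicit when asserting that an effective non-big divisor must be proportional to $-K_{\rel}+\rho H$.
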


\begin{proof}

If $\f$ is a fiber type contraction, then $-K_{\rel}+\tau H$ is not big, hence $\rho=\tau$ and, by Theorem \ref{prop:bound}, Proposition \ref{prop:alpha=beta} and Remark \ref{rem:alphanotinteger}, $\cE$ is stable. In particular $\Delta < 0$ and we can apply Proposition \ref{prop:notbig} to get $n=2,3$ or $5$.
Finally, Lemma \ref{lem:descend} provides $(-K_{\rel}+\tau H)^n\cdot H > 0$, which forces $\dim Y= n$.

If $\f$ is a divisorial contraction, then the class of $\Exc(\f)$ is effective and not big, hence it is numerically proportional to $-K_{\rel}+\rho H$. Therefore we have $$(-K_{\rel}+\rho H)\cdot(-K_{\rel}+\tau H)^{n-1}\cdot H>0,\mbox{ and }(-K_{\rel}+\rho H)\cdot(-K_{\rel}+\tau H)^{n}=0.$$
The first expression tells us that $\dim \f(\Exc(\f))\geq n-1$, whereas, arguing as above, the reduction modulo $K_{\rel}^2=\Delta H^2$ of the second shows that $\rho$ is uniquely determined by the values of $\tau$ and $\Delta$. In particular, when $\Delta<0$ the relation between $\rho$, $\Delta$ and $\tau$ is equation (\ref{eq:schr2}).
 \end{proof}


\section{Splitting criteria}\label{sec:split}

Throughout this section we will use the following notation and assumptions unless otherwise stated:

\begin{set}\label{ass:h4=1}
Let $(X,\cE)$ be as in Setup \ref{ass:sec4}. We denote by $\cM$ a dominating family of rational curves on $X$ of $H_X$-degree $\mu$.
\end{set}

\begin{remark}\label{rem:idea}
By definition, the zero locus of a section of $\cE(\beta)$ has pure codimension $2$, hence a general element $\ell_0$ of $\cM$ avoids it (cf. \cite[II, Prop. 3.7]{K}), so that $\cE_{|\ell_0}$ is an extension of $\cO((c_1+\beta)\mu)$ by $\cO(-\beta\mu)$. In particular, if $\cE$ is not stable  then for all $\ell\in\cM$ we have
\begin{equation}\label{eq:trho}
\tau(\ell) \geq\tau(\ell_0)=-(2\beta+c_1)= -\rho,
\end{equation}
where the last equality follows from Proposition \ref{prop:alpha=beta} and Remark \ref{rem:alphanotinteger}
\end{remark}

The following two results are based on \cite[Thm. 10.5]{APW} and
\cite[Thm. 1]{Ballico}.

\begin{lemma}\label{lem:newns}
Let $(X,\cE)$  be as in Setup \ref{ass:h4=1}, assume that, for some rational number $t$
 there is a surface $S \subset \P(\cE)$ such that $\pi_{|S}$ is finite and that
 $(-K_{\rel}+tH)\cdot C=0$ for every curve $C \subset S$. Then $\Delta=t^2$.
\end{lemma}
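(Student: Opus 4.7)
\medskip

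\noindent\textbf{Proof plan.} The key observation is that the hypothesis $(-K_{\rel}+tH)\cdot C=0$ for every curve $C\subset S$ means that the restriction $(-K_{\rel}+tH)_{|S}$ is numerically trivial as a divisor class on $S$. The plan is to exploit this by intersecting with two different classes on $S$ and then invoking the Chern--Wu relation $K_{\rel}^2=\Delta H^2$ on $\P(\cE)$.

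First I would write down the two intersection identities that follow from the numerical triviality of $(-K_{\rel}+tH)_{|S}$, namely
\begin{equation*}
(-K_{\rel}+tH)^2\cdot S=0\qquad\text{and}\qquad (-K_{\rel}+tH)\cdot H\cdot S=0.
\end{equation*}
The second one immediately yields $K_{\rel}\cdot H\cdot S=t\,H^2\cdot S$. Expanding the first and replacing $K_{\rel}^2\cdot S$ by $\Delta\, H^2\cdot S$ via Chern--Wu, one gets
\begin{equation*}
(\Delta+t^2)\,H^2\cdot S\;=\;2t\,K_{\rel}\cdot H\cdot S\;=\;2t^2\,H^2\cdot S,
\end{equation*}
so $(\Delta-t^2)\,H^2\cdot S=0$.

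The remaining step — and the only place where the hypothesis on $\pi_{|S}$ enters — is to argue that $H^2\cdot S>0$, which then forces $\Delta=t^2$. Since $\pi_{|S}$ is finite, $\pi(S)$ is a genuine surface (i.e. a $2$-dimensional subvariety of $X$) and $\pi_{*}S=(\deg\pi_{|S})\,[\pi(S)]$ is a nonzero effective $2$-cycle. By the projection formula
\begin{equation*}
H^2\cdot S=\pi^{*}(H_X^2)\cdot S=H_X^2\cdot\pi_{*}S,
\end{equation*}
and positivity follows from ampleness of $H_X$ applied to the surface $\pi(S)$.

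I expect the argument to be essentially routine once the two intersection identities above are combined; the only subtle point is the last one, where the finiteness of $\pi_{|S}$ is used to guarantee that $\pi(S)$ is two-dimensional so that $H_X^2$ intersects it positively. No further hypothesis on $S$ (smoothness, irreducibility, etc.) or on the sign of $\Delta$ is needed.
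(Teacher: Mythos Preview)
Your proposal is correct and follows essentially the same route as the paper: from the numerical triviality of $(-K_{\rel}+tH)_{|S}$ you deduce $(K_{\rel})_{|S}\equiv t\,H_{|S}$, combine this with the Chern--Wu relation $K_{\rel}^2=\Delta H^2$, and use finiteness of $\pi_{|S}$ to ensure $H^2\cdot S>0$. The only difference is cosmetic: the paper squares the numerical equivalence $(K_{\rel})_{|S}\equiv t\,H_{|S}$ directly, whereas you unpack the same computation into the two separate identities $(-K_{\rel}+tH)^2\cdot S=0$ and $(-K_{\rel}+tH)\cdot H\cdot S=0$.
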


\begin{proof}
By hypothesis, $(K_{\rel})_{|S}$ and $tH_{|S}$ are numerically equivalent. Hence $((K_{\rel})_{|S})^2=t^2H_{|S}^2$. But $(K_{\rel})^2$ is numerically equivalent to $\Delta H^2$ and $\pi_{|S}$ is finite, therefore $\Delta=t^2$.
 \end{proof}

The numerical conditions of the previous lemma lead to a splitting criterion.

\begin{lemma}\label{lem:ns->s}
Let $(X,\cE)$ and $\cM$ be as in Setup \ref{ass:h4=1}. If $\Delta=t^2$ for some positive rational number $t$ and there exists a curve $\ell\in\cM$ satisfying $\tau(\ell)\leq t$, then $\cE$ is decomposable.
\end{lemma}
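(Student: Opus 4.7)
The plan is to force the equality case in the bound $\rho\le -\sqrt{\Delta}$ coming from (\ref{eq:m+alpha}), and then conclude by Remark \ref{rem:extremedelta2}, which characterises decomposable bundles as exactly those achieving this equality. First, since $t>0$ we have $\Delta=t^2>0$, so by Bogomolov's inequality $\cE$ cannot be semistable, and in particular is not stable. Proposition \ref{prop:alpha=beta} together with Remark \ref{rem:alphanotinteger} then gives $\rho=2\beta+c_1$, and inequality (\ref{eq:m+alpha}) reads $\rho\le -\sqrt{\Delta}=-t$.

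The complementary inequality comes from Remark \ref{rem:idea}. Because $\cE$ is not stable and $\cM$ dominates $X$, a general $\ell_0\in\cM$ avoids the codimension-two zero locus of any section of $\cE(\beta)$, so
$$\cE_{|\ell_0}\;\cong\;\cO(-\beta\mu)\oplus\cO((c_1+\beta)\mu),$$
giving $\tau(\ell_0)=-(2\beta+c_1)=-\rho$; upper semicontinuity of the splitting type along $\cM$ yields $\tau(\ell')\ge -\rho$ for every $\ell'\in\cM$. Applying this to the specific curve $\ell$ provided by the hypothesis, the chain
$$
t\;\ge\;\tau(\ell)\;\ge\;-\rho\;\ge\;\sqrt{\Delta}\;=\;t
$$
collapses to equalities, so $\rho=-\sqrt{\Delta}$. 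Remark \ref{rem:extremedelta2} then gives that $\cE$ is a direct sum of line bundles.

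The argument is essentially a pigeonhole between two preexisting inequalities and so presents no serious obstacle. The only point that needs care is that Remark \ref{rem:idea} produces a bound $\tau(\ell')\ge -\rho$ valid for \emph{every} curve in $\cM$, not merely for the general one; it is precisely this uniform version that permits us to feed the specific $\ell$ of the hypothesis into the squeeze.
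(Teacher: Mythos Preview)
Your proof is correct and follows essentially the same route as the paper's: establish non-semistability from $\Delta>0$ via Bogomolov, sandwich $\rho$ between $-t$ (from (\ref{eq:m+alpha})) and $-\tau(\ell)\ge -t$ (from (\ref{eq:trho}) in Remark \ref{rem:idea}), and invoke Remark \ref{rem:extremedelta2}. The only superfluous step is the appeal to Remark \ref{rem:alphanotinteger}, which is unnecessary here since $\cE$ is already known to be non-semistable and Proposition \ref{prop:alpha=beta} handles that case directly.
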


\begin{proof}
Since $\Delta \geq 0$, then, by Bogomolov inequality and Lemma \ref{lem:estdelta=0}, $\cE$ is not semistable; the assumption on $\tau(\ell)$ yields, by (\ref{eq:trho}) that $\rho\ge -\sqrt{\Delta}$; combining it with (\ref{eq:m+alpha}) we get $\rho=-\sqrt{\Delta}$, and $\cE$ is decomposable by Remark \ref{rem:extremedelta2}.
 \end{proof}

%

A more general splitting criterion can be stated as follows:

\begin{corollary}\label{cor:condition}
Let $(X,\cE)$ and $\cM$ be as in Setup \ref{ass:h4=1}. Assume that there exists a rational number $t$ such that
\begin{equation}\label{eq:ns}
 \til\cM_y^{t} \; \text{contains a complete curve $T$ for some}\; y \in \P(\cE).
\end{equation}
Then $\cE$ is decomposable.
\end{corollary}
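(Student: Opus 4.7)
The plan is to use the complete curve $T$ to manufacture a surface $S\subset\P(\cE)$ that satisfies the hypotheses of Lemma \ref{lem:newns}, thereby obtaining $\Delta=t^2$, and then to invoke Lemma \ref{lem:ns->s}. Concretely, I would restrict the universal family $\til p^t\colon\til\cU^t\to\til\cM^t$ to the normalization $T'$ of $T$, obtaining a $\P^1$-bundle $\Sigma\to T'$ together with an evaluation morphism $\nu\colon\Sigma\to\P(\cE)$; take $S:=\nu(\Sigma)$. Since every section parametrized by $T$ passes through $y$, the preimage $\nu^{-1}(y)$ meets each fiber of $\Sigma\to T'$ in exactly one point and defines a section $\sigma\colon T'\to\Sigma$ which $\nu$ contracts to $y$. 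Each fiber of $\Sigma\to T'$ maps to a minimal section $\til\ell$ with $\tau(\til\ell)=t$, so $\nu^*(-K_{\rel}+tH)$ has intersection zero with both $\sigma$ and any fiber; as these generate $N^1(\Sigma)$, $\nu^*(-K_{\rel}+tH)\equiv 0$, and pushing forward yields $(-K_{\rel}+tH)\cdot C=0$ for every curve $C\subset S$.

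Next I would verify that $\pi|_S$ is finite. The base curves $\ell_s$ truly vary with $s\in T$: for $t>0$ the minimal section over $\ell$ is unique because $\cE_{|\ell}$ is unbalanced, while for $t=0$ the section of $\ell\times\P^1$ through a fixed point is unique. In both cases the natural map $T\to\cM^t$ is injective, so $\pi(S)=\bigcup_{s\in T}\ell_s$ has dimension two. To rule out contracted curves, note that the fiber $\pi^{-1}(x_0)$ through $\pi(y)=x_0$ meets every section of $T$ only at $y$, giving $\pi^{-1}(x_0)\cap S=\{y\}$; for other fibers, a generic dimension count on $T$ forces only finitely many sections from $T$ to meet a given fiber, so no $\pi$-fiber lies in $S$. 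Lemma \ref{lem:newns} then yields $\Delta=t^2$.

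To conclude, pick any $\ell\in\cM^t$ (nonempty since $T\neq\emptyset$); then $\tau(\ell)=t$, so for $t>0$ Lemma \ref{lem:ns->s} immediately gives that $\cE$ is decomposable. For $t=0$ one has $\Delta=0$, and the bound $\tau(\ell)\geq -\rho$ from (\ref{eq:trho}) combined with Proposition \ref{prop:alpha=beta} forces $\cE$ to be semistable; Lemma \ref{lem:estdelta=0} then forces $\cE$ to be trivial, hence decomposable. The main obstacle I foresee is the careful verification that $\pi|_S$ is finite --- equivalently, that $S$ contains no $\pi$-fiber --- since the rest of the argument reduces to routine numerical computations on the ruled surface $\Sigma$.
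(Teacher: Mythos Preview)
Your approach is essentially the paper's: build the surface $S=\loc(T)$, show $(-K_{\rel}+tH)$ is numerically trivial on $S$, verify $\pi|_S$ is finite, apply Lemma~\ref{lem:newns} to get $\Delta=t^2$, and finish with Lemma~\ref{lem:ns->s}. Two remarks.

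First, the ``main obstacle'' you flag is not an obstacle at all: finiteness of $\pi|_S$ follows \emph{immediately} from what you have already shown. You proved $(-K_{\rel}+tH)\cdot C=0$ for every curve $C\subset S$; but a fiber $f$ of $\pi$ satisfies $(-K_{\rel}+tH)\cdot f=-K_{\rel}\cdot f=2\neq 0$, so no fiber lies in $S$. Your ``generic dimension count'' is unnecessary and, as written, is too vague to be a proof.

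Second, the paper's argument is slightly more streamlined than yours. Instead of pulling back to the ruled surface $\Sigma$ and computing with the two generators of $N_1(\Sigma)$, the paper invokes the standard fact (cf.\ \cite[IV.3.13]{K}) that every curve contained in the locus of a proper family of curves through a fixed point is numerically proportional to a curve of the family. This single sentence yields simultaneously the numerical triviality of $(-K_{\rel}+tH)|_S$ and the finiteness of $\pi|_S$. Your route via $\Sigma$ is perfectly valid and more explicit, but longer.

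Finally, you are right to treat $t=0$ separately, since Lemma~\ref{lem:ns->s} is stated for $t>0$; the paper's proof glosses over this. Your argument via Lemma~\ref{lem:estdelta=0} is correct.
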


\begin{proof} Let $S$ be the locus of curves parametrized by the complete curve $T$. Every curve in $S$ is numerically proportional to a curve $\ell$ of $\til\cM_y^{t}$. Since $\tau(\ell)=t$ it follows that $-K_{\rel}+tH$ is numerically trivial on $H$, hence $\Delta=t^2$ by Lemma \ref{lem:newns} and the splitting follows from Lemma \ref{lem:ns->s}.
 \end{proof}

\begin{remark}\label{cor:nsapp}
The same arguments work if we assume that for some $t$, the family $\til\cM^t$ is unsplit and for some $y \in \loc(\til\cM^{t})$ there is a component of $\Cloc_y(\til\cM^{t})$ of dimension $\ge 2$.
\end{remark}

Keeping track of the dimensions of the families parametrizing rational curves of fixed splitting type passing by a general point, one may translate condition (\ref{eq:ns}) into a numerical splitting criterion:

\begin{theorem}\label{prop:nssemistable}
Let $(X,\cE)$ and $\cM$ be as in Setup \ref{ass:h4=1}, and assume that $\cM_x$ is proper for a general $x\in X$ and that $\cE$ is not stable. Assume moreover that
\begin{itemize}
\item  $\tau < 2i_X -\rho-4/\mu$\; if $\cE $ is not semistable, and
\item $\tau < 2i_X-6/\mu$\;  if $\cE$ is semistable.
\end{itemize}
Then $\cE$ is decomposable.
\end{theorem}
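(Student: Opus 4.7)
The plan is to argue by contradiction using Corollary~\ref{cor:condition}: if $\cE$ is indecomposable, then no $\widetilde{\cM}^t_y$ can contain a complete curve, and I shall try to produce one directly from the properness hypothesis on $\cM_x$.

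Since $\cE$ is not stable, Remark~\ref{rem:idea} tells us that the generic value of $\tau$ along $\cM$ is $t_0 := -\rho$, so $\cM^{t_0}_x$ is open in $\cM_x$ while every higher stratum $\cM^{t}_x$, $t>t_0$, is closed by upper semicontinuity of splitting type. Set $t_1 := \max\{\tau(\ell) : \ell \in \cM_x\}$. Because $\cM_x$ is proper, $\cM^{t_1}_x$ is proper, and (identifying $\widetilde{\cM}^{t_1}$ with $\cM^{t_1}$ when $t_1>0$) the family $\widetilde{\cM}^{t_1}_y$ of minimal sections through $y$ is proper for $y$ generic in the image of $\widetilde{\operatorname{ev}}^{t_1}$. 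If $\dim \widetilde{\cM}^{t_1}_y \ge 1$, the fact that $\widetilde{\cM}^{t_1}_y$ is proper would produce a complete curve $T \subset \widetilde{\cM}^{t_1}_y$, contradicting Corollary~\ref{cor:condition}; hence $\dim \widetilde{\cM}^{t_1}_y = 0$. Combining with the deformation-theoretic bound $\dim \widetilde{\cM}^{t_1}_y \ge -K_{\P(\cE)}\cdot\widetilde{\ell} - 2 = (i_X-t_1)\mu - 2$ yields the preliminary estimate $t_1 \ge i_X - 2/\mu$.

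The next step is to sharpen this by exploiting the whole filtration by closed proper subschemes $\widetilde{\cM}_y \supseteq \widetilde{\cM}^{\geq t}_y \supseteq \cdots \supseteq \widetilde{\cM}^{t_1}_y$, all proper thanks to the properness of $\cM_x$. On the lower end, $\dim \widetilde{\cM}^{t_0}_y \ge (i_X+\rho)\mu - 2$ by the same deformation-theoretic estimate applied to a generic minimal section (with an extra $+1$ in the semistable case, since then $t_0=0$ and the minimal sections over a curve with $\cE_{|\ell}$ trivial form a $\P^1$-fibration rather than a single point). On the other hand, the hypothesis that no individual $\widetilde{\cM}^{t}_y$ contains a complete curve forces every irreducible component of $\widetilde{\cM}_y$ to exit its generic stratum along a codimension-one closed subset; tracking the resulting dimension drops up the filtration from $t_0$ to $t_1$ gives a lower bound on $t_1 - t_0$ which, after substituting $t_0 = -\rho$ and using $\tau \ge t_1$, rearranges into $\tau \ge 2i_X - \rho - 4/\mu$ (non-semistable) or $\tau \ge 2i_X - 6/\mu$ (semistable), contradicting the hypothesis.

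The main obstacle will be to make this dimension-drop count rigorous, as \emph{a priori} the codimension of a higher stratum $\cM^{>t}$ inside $\cM^{t_0}$ is only controlled by a nonlinear (essentially Shatz-type) estimate in $t - t_0$. The right way to do it, in the spirit of Proposition~\ref{prop:section} and its Grauert--M\"ulich-style argument, is to work on the lifted families $\widetilde{\cM}^t$ inside $\P(\cE)$ and to verify that each unit jump of the form $(a,b)\mapsto(a-1,b+1)$ in the splitting type contributes exactly one to the codimension of the corresponding sub-family through a fixed point $y$; assembling these contributions produces the linear dimension bound that matches the numerical hypothesis.
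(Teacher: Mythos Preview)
Your overall strategy---apply Corollary~\ref{cor:condition} and count strata---is the same as the paper's, but the filtration you try to set up does not exist, and even if it did your deformation estimate would be too weak to conclude.

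The first problem is that the object ``$\til\cM_y$'' and the chain $\til\cM_y \supseteq \til\cM^{\geq t}_y \supseteq \cdots \supseteq \til\cM^{t_1}_y$ are not well defined. Minimal sections over curves of splitting type $t$ and $t'$ have different numerical classes in $\P(\cE)$ (their $K_{\rel}$-degrees differ), so the families $\til\cM^{t}$ for different $t$ lie in different components of $\rat(\P(\cE))$; they do not sit inside one another. Consequently there is no ambient proper scheme in which to run the codimension-drop argument, and the Grauert--M\"ulich fix you propose does not address this, since it still works with the lifted families.

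The second problem is quantitative: even if you could force such a filtration, your bottom-stratum bound $\dim\til\cM^{t_0}_y \geq (i_X+\rho)\mu-2$ uses the anticanonical degree of a \emph{lifted} curve, which is $(i_X-t_0)\mu=(i_X+\rho)\mu$. Feeding this into the dimension-drop count yields only $\tau \geq 2i_X+\rho-4/\mu$, which is off from the hypothesis by $2|\rho|$ and does not give a contradiction when $\rho<0$.

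The idea you are missing is the divisor $D\in|L+\beta H|=|\tfrac{1}{2}(-K_{\rel}-\rho H)|$. By Lemma~\ref{lem:splitsection} one has $\loc(\til\cM^t)\subset D$ for every relevant $t$ (all $t$ when $\cE$ is not semistable; all $t>0$ when $\rho=t_{\min}=0$). Since $D$ is unisecant it meets $\pi^{-1}(x)$ in a single point $y_0$ for general $x$, so every minimal section over a curve of $\cM^t_x$ automatically passes through $y_0$, i.e.\ $\til\cM^t_{y_0}\cong\cM^t_x$. This collapses the problem to the \emph{downstairs} family $\cM_x$, where the strata $\cM^t_x$ genuinely form a filtration by upper semicontinuity, the ambient dimension is $\dim\cM_x=i_X\mu-2$ (not $(i_X+\rho)\mu-2$), and a complete curve in any $\cM^t_x$ maps to one in $\til\cM^t_{y_0}$, feeding directly into Corollary~\ref{cor:condition}. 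The count then gives the correct bound.
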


\begin{proof} Let $x$ be a general point in $X$ and denote by $t_{\min}$ (resp. $t^{\max}$) the minimum (resp. the maximum) integer such that $\cM_x^{t_{\min}}\neq\emptyset$ (resp. $\cM_x^{t^{\max}}\neq\emptyset$). By Remark \ref{rem:idea}, $t_{\min}=-\rho\geq 0$. Since $\cE$ is not stable, by Proposition \ref{prop:alpha=beta} and Remark \ref{rem:alphanotinteger} we have that $\rho=2\beta+c_1$.
Let $D$ be a divisor in $ |L+\beta H|=|\frac{1}{2}(-K_{\rel}+\rho H)|$, and $t\in[t_{\min},t^{\max}]$. By Lemma \ref{lem:splitsection}
we deduce that $\loc( \til\cM^t) \subseteq D$ unless $t=t_{\min}= \rho=0$.

Assume first that $\cE$ is not semistable. The divisor $D$ meets the fiber over a general $x \in X$ in a point hence, from the argument above, we get that the existence of a complete curve in $\cM^t_x$ for some $t$, implies Condition (\ref{eq:ns}).
Since $\cM_x$ is proper, we may thus express a necessary condition for the bundle not to split as follows (cf. \cite[Cor. 10.6]{APW}):
\begin{enumerate}
\item[a)] $\dim  \cM^{t^{\max}}_x=0$,
\item[b)] $\dim \overline {\cM_x^t} - \dim ( \overline {\cM_x^t} \cap (\cup_{b >t}\cM_x^b )) \le 1$ if  $t_{\min} \le t < t^{\max}$.
\end{enumerate}
In particular $\#\{\cM^t_x\} \ge \dim \cM_x +1=i_X\mu-1$. On the other hand
\begin{equation}\label{eq:ss2}
\#\{\cM^t_x\}\leq \dfrac{\mu}{2}(t^{\max}-t_{\min})+1\leq \dfrac{\mu}{2}(\tau+\rho)+1,
\end{equation}
that combined with the previous equation, gives the first part of the statement.

The case of $\cE$ semistable but not stable (which corresponds to $t_{\min}= \rho=0$) is slightly different: since $\loc(\til\cM^0) \not \subset D$, we only know that if $\cE$ is indecomposable, then $\#\{\cM^t_x\} \ge \dim \cM_x =i_X\mu-2$, and we conclude by combining this with (\ref{eq:ss2}). \end{proof}

\begin{corollary}\label{cor:newbound}
With the same notation as above, assume that $\Delta\geq 0$ and that $\tau < 2i_{X} +\sqrt{\Delta}-4/\mu$. Then $\cE$ is decomposable.
\end{corollary}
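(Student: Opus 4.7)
The plan is to deduce Corollary \ref{cor:newbound} as a direct consequence of Theorem \ref{prop:nssemistable}, by controlling $\rho$ using the discriminant via inequality (\ref{eq:m+alpha}).

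First I would dispose of trivial cases. Since $\Delta\geq 0$, Bogomolov's inequality together with Lemma \ref{lem:estdelta=0} implies that $\cE$ is either trivial (in which case $\cE\iso\cO_X^{\oplus 2}$ is already decomposable, and there is nothing to prove) or $\cE$ is not semistable. So I reduce at once to the case where $\cE$ is not semistable; in particular $\cE$ is not stable, which is a standing hypothesis in Theorem \ref{prop:nssemistable}.

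Next I would bring in the slope of $\cEff{\P(\cE)}$. Since $\cE$ is not semistable and $\Delta\geq 0$, Proposition \ref{prop:alpha=beta} combined with inequality (\ref{eq:m+alpha}) gives
$$\rho=2\beta+c_1\leq -\sqrt{\Delta},$$
so that $-\rho\geq\sqrt{\Delta}$. Plugging this into the hypothesis of the corollary,
$$\tau< 2i_X+\sqrt{\Delta}-\frac{4}{\mu}\leq 2i_X-\rho-\frac{4}{\mu}.$$
This is exactly the numerical bound appearing in the first case of Theorem \ref{prop:nssemistable} (the non-semistable case), so that theorem applies directly and yields that $\cE$ is decomposable.

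The whole argument is essentially a one-line reduction once the correct version of Theorem \ref{prop:nssemistable} is available; there is no real obstacle, since the only substantive issue would be checking that $\cE$ is genuinely non-semistable (so that the non-semistable branch of the theorem is the relevant one), and that is immediate from $\Delta\geq 0$ via Lemma \ref{lem:estdelta=0}. The hypothesis on $\cM$ (dominating family with $\cM_x$ proper for general $x$) is inherited from the ``same notation as above'' that precedes the statement, so it requires no additional work.
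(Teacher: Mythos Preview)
Your proposal is correct and follows essentially the same argument as the paper: reduce to the non-semistable case via Bogomolov's inequality and Lemma~\ref{lem:estdelta=0}, use (\ref{eq:m+alpha}) to get $-\rho\geq\sqrt{\Delta}$, and then apply Theorem~\ref{prop:nssemistable}. The paper's proof is simply a more compressed version of what you wrote.
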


\begin{proof} By Bogomolov inequality and Lemma \ref{lem:estdelta=0}, we may assume that $\cE$ is not semistable.
By (\ref{eq:m+alpha}),  we have $ \sqrt{\Delta} \le -\rho$  and we conclude by Theorem \ref{prop:nssemistable}.
 \end{proof}

The last statement of this section shows that decomposability is determined by the pair $(\tau,\rho)$:

\begin{corollary}\label{cor:tau+rho}
Let $(X,\cE)$ be as in Setup \ref{ass:sec4}; then $\cE$ is decomposable if and only if $\tau +\rho=0$.
\end{corollary}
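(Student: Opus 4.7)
The plan is to verify the two implications separately, with the converse requiring only a short intersection-theoretic calculation based on material already developed in Section \ref{sec:posit}.

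For the forward direction ($\cE$ decomposable $\Longrightarrow \tau+\rho = 0$), I would write $\cE \cong \cO_X(a) \oplus \cO_X(b)$ with $a \ge b$. Example \ref{ex:split} gives $\tau = a-b$. When $a > b$, the bundle is not semistable with $\beta = -a$, so Proposition \ref{prop:alpha=beta} yields $\rho = 2\beta + c_1 = b - a = -\tau$. When $a = b$, the normalization $c_1\in\{0,-1\}$ forces $\cE \cong \cO_X^{\oplus 2}$, and then $\tau = 0$ by Theorem \ref{prop:bound} and $\rho = 0$ by Remark \ref{rem:alphanotinteger}. In every case $\tau + \rho = 0$.

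For the converse I would argue by contrapositive. Assume $\cE$ is indecomposable. Since $b_4(X)=1$ forces $n \ge 2$, I can apply Lemma \ref{lem:descend} with $j = 1$ and $\rho' = \rho$; as $(j,\rho') \ne (n,\rho)$, the inequality is strict:
\[
(-K_{\rel} + \rho H)\cdot(-K_{\rel} + \tau H)\cdot H^{n-1} > 0.
\]
The key step is to expand this product via the Chern--Wu relation $K_{\rel}^2 = \Delta H^2$ together with $H^{n+1} = 0$:
\[
(-K_{\rel} + \rho H)(-K_{\rel} + \tau H) H^{n-1} = \Delta H^{n+1} - (\tau+\rho) K_{\rel} H^n + \tau\rho H^{n+1} = (\tau+\rho)\,(-K_{\rel}) \cdot H^n.
\]
Since $-K_{\rel}$ is $\pi$-ample, $(-K_{\rel}) \cdot H^n > 0$, so $\tau + \rho > 0$, as desired.

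The substantive content sits entirely in the strict inequality provided by Lemma \ref{lem:descend}: this is the only place where indecomposability of $\cE$ is used, and it packages all earlier case-analysis on the sign of $\Delta$ and on the (semi)stability of $\cE$ into a single statement. The expected main obstacle is therefore not in this corollary itself but in that lemma; once it is in place, no further discussion of cases is needed and the corollary follows essentially by arithmetic.
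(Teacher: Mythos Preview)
Your proof is correct, and it takes a genuinely different route from the paper's.

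The paper proves the nontrivial implication ($\tau+\rho=0 \Rightarrow$ decomposable) by invoking the rational-curve machinery of Section~\ref{sec:split}: after reducing to $\tau>0$ (Theorem~\ref{prop:bound}) and to $\cE$ not semistable (Lemma~\ref{lem:alphabetass}), it applies Theorem~\ref{prop:nssemistable}, a dimension count on families of minimal sections, and then handles the residual case $i_X=2$, $\mu=1$ via uniformity and Corollary~\ref{cor:splitsection}. Your argument bypasses all of this: you work by contrapositive and extract $\tau+\rho>0$ directly from the $j=1$ case of Lemma~\ref{lem:descend}, using only the Chern--Wu relation and $H^{n+1}=0$. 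The computation
\[
(-K_{\rel}+\rho H)(-K_{\rel}+\tau H)H^{n-1}=(\tau+\rho)(-K_{\rel})\cdot H^n
\]
is exactly right, and $(1,\rho)\neq(n,\rho)$ since $b_4(X)=1$ forces $n\ge 2$.

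What each approach buys: the paper's argument is self-contained within its rational-curve framework and illustrates how Theorem~\ref{prop:nssemistable} is meant to be used. Your approach is shorter and purely numerical, but it shifts the burden onto Lemma~\ref{lem:descend}, whose proof the paper only sketches. For $j=1$ that lemma does follow from material already in Section~\ref{sec:posit} (the $\arg$ monotonicity for $\Delta<0$; Theorem~\ref{prop:bound}, Lemma~\ref{lem:estdelta=0}, Proposition~\ref{prop:alpha=beta} and inequality~(\ref{eq:posit0n}) for $\Delta=0$; Lemma~\ref{lem:posit4} with Remarks~\ref{rem:extremedelta2} and~\ref{rem:extremedelta} for $\Delta>0$), so there is no circularity. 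Your forward direction is also fine, matching the paper's ``direct computation''.
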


\begin{proof}
Assume that $\tau +\rho=0$; by Theorem \ref{prop:bound} we may assume that $\tau >0$, hence, by Lemma \ref{lem:alphabetass}, $\cE$ is not semistable. Consider a minimal covering family of rational curves $\cM$, of $H_X$-degree $\mu$. By Theorem \ref{prop:nssemistable} the bundle is decomposable unless possibly if $i_X=2$ and $\mu=1$ (hence $\cM$ is unsplit).
In this case, by Remark \ref{rem:idea},  $\cE$ is uniform, and the decomposability follows from  Corollary \ref{cor:splitsection}. The converse is a direct computation.
 \end{proof}


\section{Fano bundles}\label{sec:fano}

In this section we will apply our techniques to Fano bundles, i.e. bundles whose projectivization is a Fano manifold, in order to obtain structure theorems and partial classification results. Notice that a bundle $\cE$ on $X$ is a Fano bundle if and only if $\tau < i_X$

We begin by describing the second contraction $\f:\P(\cE) \to Y$. We will say that $\f$ is a $\P^1${\em -bundle}  if there is a rank two vector bundle $\cF$ on $Y$ such that $\P(\cE) = \P(\cF)$; we will say that $\f$ is a {\em conic bundle} if there is a rank three vector bundle $\cF$ on $Y$ such that $\P(\cE)$ embeds in $\P(\cF)$ as a divisor of relative degree two. We will denote by $l(R_2)$ the length of the extremal ray $R_2$, i.e. the minimum anticanonical degree of rational curves whose numerical class is in $R_2$ and by $C$ a curve in the ray whose anticanonical degree equals the length. The length of a $\P^1${\em -bundle} contraction is  two, while the length of a conic bundle contraction is either one if there are singular fibers, or two if all the fibers are reduced and irreducible.

\begin{lemma}\label{lem:contractiontype} Let $(X,\cE)$ be as in Setup \ref{ass:sec4} and assume that $\cE$ is Fano. If $\cE$ is indecomposable then, with the same notation as above:
\begin{enumerate}
 \item either $\f$ is a $\P^1$-bundle, $ l(R_2)= 2$, $\tau=i_X-\frac{2}{H \cdot C}$, or
 \item $\f$ is a conic bundle with reducible fibers, $ l(R_2)= 1$, $\tau=i_X-\frac{1}{H \cdot C}$, or
\item $\f$ is the blow-up of a codimension two smooth subvariety, $l(R_2)= 1$, $\tau=i_X-\frac{1}{H \cdot C}$.
\end{enumerate}
In all cases $Y$ is smooth and Fano.
\end{lemma}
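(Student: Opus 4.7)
Since $\cE$ is a Fano bundle, $\P(\cE)$ is a smooth Fano manifold of Picard number two, so the Cone and Contraction Theorem furnishes the second extremal contraction $\f\colon\P(\cE)\to Y$ of $R_2$ onto a normal projective variety. Taking a rational curve $C$ on $R_2$ with $-K_{\P(\cE)}\cdot C=l(R_2)$ and using $-K_{\P(\cE)}=-K_{\rel}+i_XH$ together with $(-K_{\rel}+\tau H)\cdot C=0$, I immediately get
\[
l(R_2)=(i_X-\tau)(H\cdot C),\qquad\text{so}\qquad \tau=i_X-\frac{l(R_2)}{H\cdot C},
\]
which reduces the lemma to showing $l(R_2)\in\{1,2\}$ and identifying the geometric type of $\f$ in each instance.

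The key structural observation is that for any fiber $F$ of $\f$ the restriction $\pi|_F$ is finite, since otherwise a $\pi$-fiber would sit inside $F$ and force $R_1=R_2$; in particular $\dim F\le n$. I would then combine this with the Ionescu-Wisniewski inequality $\dim F+\dim\Exc(\f)\ge (n+1)+l(R_2)-1$. If $\f$ is of fiber type, this gives $\dim F\ge l(R_2)-1$, and pushing a minimal $R_2$-rational curve $C$ forward by $\pi$ produces a rational curve $\ell\subset X$ on whose Hirzebruch preimage $\pi^{-1}(\ell)$ the curve $C$ is a minimal section. Exploiting that $\pi|_F$ is finite and that $F$ is swept out by curves in the single numerical class of $C$, I would argue that $\dim F=1$, so $l(R_2)\le 2$; when $l(R_2)=2$ every scheme-theoretic fiber is a reduced, irreducible $\P^1$ and Fujita's characterization identifies $\f$ as a $\P^1$-bundle (case (1)), while when $l(R_2)=1$ reducible fibers must occur and $\f$ is a conic bundle with reducible fibers (case (2)). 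If instead $\f$ is birational, then it cannot be small: a small contraction of a smooth variety of Picard number two whose other elementary contraction is a $\P^1$-bundle is incompatible with the existence of curves contracted by both rays being independent. Hence $\f$ is divisorial, and Ando's theorem on length-one divisorial contractions of smooth manifolds identifies $\f$ as the blow-up of a smooth codimension-two subvariety, giving case (3) with $l(R_2)=1$.

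Smoothness of $Y$ in each of the three cases follows from the characterizations invoked (Fujita for $\P^1$-bundles, Andreatta-Wisniewski for smooth conic bundle contractions, Ando for smooth divisorial contractions), and Fano-ness of $Y$ follows from writing $-K_Y$ as a pushforward of a combination of $-K_{\P(\cE)}$ and (in case (3)) the exceptional divisor, using the projection formula and the ampleness of $-K_{\P(\cE)}$. The main obstacle will be controlling fiber dimensions in the fiber-type case: Wisniewski's inequality alone only gives $l(R_2)\le n+1$, so showing $\dim F=1$ requires a genuine use of the $\P^1$-bundle structure of $\pi$ and the fact that the pushforward of an $R_2$-curve to $X$ can be put in a ruled surface where $R_2$ is generated by a section, forcing fibers of $\f$ to be one-dimensional.
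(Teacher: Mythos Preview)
Your derivation of $\tau=i_X-l(R_2)/(H\cdot C)$ is fine, and the observation that $\pi|_F$ is finite for every fiber $F$ of $\f$ is correct and useful. But the heart of the lemma is the bound $\dim F\le 1$, and here your argument has a genuine gap.

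You write that since $F$ is swept out by curves in a single numerical class you ``would argue that $\dim F=1$''. This does not follow: a surface, or a higher-dimensional variety, can perfectly well be swept out by rational curves lying in one numerical class (think of $\P^2$ and its lines). The Ionescu--Wi\'sniewski inequality alone, as you acknowledge, gives nothing useful here, and no amount of pushing curves down to $X$ and looking at ruled surfaces will force one-dimensionality of $F$. What is missing is the use of the hypothesis that $\cE$ is \emph{indecomposable}. The paper's argument is short and clean: if some fiber $F$ contained a surface $S$, then $\pi|_S$ is finite and every curve in $S$ satisfies $(-K_{\rel}+\tau H)\cdot C=0$; Lemma~\ref{lem:newns} (which uses $b_4(X)=1$) then gives $\Delta=\tau^2$, and Remark~\ref{rem:extremedelta} forces $\cE$ to split. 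This is where indecomposability enters, and your proposal never invokes it.

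Once $\dim F\le 1$ for every fiber, Wi\'sniewski's structure theorem for contractions with one-dimensional fibers (\cite[Thm.~1.2]{Wis}) does all the remaining work simultaneously: it rules out small contractions, gives smoothness of $Y$, and yields the trichotomy (blow-up / conic bundle with reducible fibers / smooth conic bundle). Your separate treatment of the birational case---claiming small contractions are ``incompatible'' with a second $\P^1$-bundle structure---is not a valid argument as stated. Finally, the passage from ``smooth conic bundle'' to ``$\P^1$-bundle'' is not automatic: there is a potential Brauer obstruction, and the paper handles this with a nontrivial computation in Lemma~\ref{lem:pbundle}, which again exploits $b_4(X)=1$ via the Chern--Wu relation. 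Your appeal to ``Fujita's characterization'' skips this step.
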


\begin{proof}
Observe first that any non trivial fiber of $\f$ is one dimensional. In fact, if a fiber contained a surface we would have $\Delta=\tau^2$ by Lemma \ref{lem:newns}, and $\cE$ would split by Remark \ref{rem:extremedelta}. By \cite[Thm. 1.2]{Wis} we then get that $Y$ is smooth  and either $l(R_2)=1$ and we are in case (2), (3) or $l(R_2)=2$ and $\f:\P(\cE) \to Y$ is a conic bundle without reducible fibers. In this case we conclude by Lemma \ref{lem:pbundle}. Moreover $Y$ is always Fano, since it is covered by rational curves and its Picard number is one.
  \end{proof}

%

\begin{lemma}\label{lem:pbundle}
With the same notation as above, assume that $\f:\P(\cE)\to Y$ is a smooth conic bundle. Then $\f$ is a $\P^1$-bundle.
\end{lemma}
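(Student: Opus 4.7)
The plan is to produce a line bundle $\cM$ on $\P(\cE)$ of relative degree one over $\f$. Once such an $\cM$ exists, cohomology and base change give that $\cF:=\f_*\cM$ is locally free of rank two on $Y$, and the natural surjection $\f^*\cF\twoheadrightarrow\cM$ induces an isomorphism $\P(\cF)\simeq\P(\cE)$ over $Y$, which is the required $\P^1$-bundle structure.

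The first step is to pin down $H\cdot F$ for $F$ a fiber of $\f$. Smoothness of the conic bundle gives $F\simeq\P^1$, so $-K_{\P(\cE)}\cdot F = 2$. Since $[F]\in R_2$, the identity $(-K_{\rel}+\tau H)\cdot F=0$, combined with $-K_{\P(\cE)}=-K_{\rel}+i_XH$, yields
\[
(i_X-\tau)(H\cdot F)=2,
\]
and hence $H\cdot F\in\{1,2\}$ (using that $H\cdot F$ is a positive integer and $i_X-\tau>0$ by the Fano hypothesis). The companion relation $-K_{\rel}=2L-c_1H$ then gives $L\cdot F=(c_1-\tau)(H\cdot F)/2$.

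If $H\cdot F=1$, one simply takes $\cM:=H$, which already has relative degree one. If instead $H\cdot F=2$, then $\tau=i_X-1$ and $L\cdot F=c_1+1-i_X$; the line bundle $aL+bH$ has relative degree $a(c_1+1-i_X)+2b$, which equals $1$ for some $a,b\in\Z$ precisely when $i_X-c_1$ is even, in which case $\cM:=L+\tfrac{i_X-c_1}{2}H$ does the job. In either permitted configuration, $\f_*\cM$ is locally free of rank two since $h^0(\P^1,\cO(1))=2$ is constant in $y\in Y$, and the standard evaluation map provides the desired isomorphism $\P(\f_*\cM)\simeq\P(\cE)$.

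The main obstacle is therefore the residual configuration $H\cdot F=2$ with $i_X-c_1$ odd, in which $\Pic(\P(\cE))=\Z L\oplus\Z H$ contains no line bundle of relative degree one. I would exclude it geometrically by analysing $\pi|_F$: such an $F\simeq\P^1$ projects under $\pi$ either as a section over a smooth conic in $X$, or as a degree-two cover of a line, and varying $F$ over $Y$ produces an $n$-dimensional family of rational curves in $X$ of $H_X$-degree at most $2$ covering $X$. Combined with $\Pic(X)=\Z\langle H_X\rangle$, $b_4(X)=1$, and the indecomposability of $\cE$ inherited from the context of Lemma \ref{lem:contractiontype}, this family should be incompatible with the smoothness of $\f$ by an argument in the spirit of Corollary \ref{cor:splitsection}. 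Once this configuration is ruled out, the $\cM$ constructed above finishes the proof.
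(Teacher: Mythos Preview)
Your reduction to finding a line bundle of relative degree one over $\f$ is correct and clean, but the argument has two genuine gaps.

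First, the claim $H\cdot F\in\{1,2\}$ does not follow from $(i_X-\tau)(H\cdot F)=2$ alone: $i_X-\tau$ is only known to be a positive rational number, not an integer, so this equation imposes no upper bound on $H\cdot F$. What you actually need is $\gcd(L\cdot F,\,H\cdot F)=1$; the obstruction case is $\gcd\geq 2$, and this can in principle occur with $H\cdot F$ arbitrarily large.

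Second, and more seriously, the residual case is not proved. Saying that the family of images $\pi(F)$ ``should be incompatible with the smoothness of $\f$ by an argument in the spirit of Corollary~\ref{cor:splitsection}'' is not an argument: that corollary concerns uniform bundles with a section of a specific twist, and there is no visible mechanism by which your family of conics (or double lines) produces such a section or forces a contradiction. This is precisely the hard part of the lemma.

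The paper's proof takes a completely different route. Assuming $\f$ is not a $\P^1$-bundle, it compares the two natural bases $\{-K_{\rel},H\}$ and $\{-K',H'\}$ of $N^1(\P(\cE))$ (where $-K'$ is the relative anticanonical of $\f$ and $H'=\f^*H_Y$), and the determinant of the change-of-basis matrix being $\pm 2$ forces $H\cdot F=2\,(H'\cdot f)$. It then rewrites the Chern-Wu relation $K_{\rel}^2=\Delta H^2$ in the $\{-K',H'\}$ basis and uses the key geometric input that for a \emph{smooth} conic bundle the discriminant class $-\f_*(K'^2)$ vanishes, i.e.\ $K'^2H'^{n-1}=0$. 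Combined with the number-theoretic constraint from Proposition~\ref{prop:notbig} (rationality of $\tan^2(\pi/(n+1))$ forces $n\in\{2,3,5\}$), this yields a single surviving numerical possibility ($n=3$, $i_X=2$, $i_Y=3$), which is then excluded by computing $H_X^3=8$ and invoking the classification of del Pezzo threefolds. None of these ingredients---the discriminant vanishing, the trigonometric constraint, or the del Pezzo classification---appears in your proposal, and something of comparable strength is needed to close the residual case.
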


\begin{proof}
Denote by $-K'$ the relative anticanonical divisor of the morphism $\f:\P(\cE)\to Y$, by $H'$ the pull-back via $\f$ of $H_Y$, the ample generator of $\Pic(Y)$ and by $i_Y$ the index of $Y$. Let $f$ and $f'$ be the numerical class of the fibers of $\pi$ and $\f$, respectively, and set $\mu:=H\cdot f'$, $\mu':=H'\cdot f$. For simplicity, we will denote $\nu:=i_X\mu-2=K\cdot f'$, $\nu':=i_Y\mu'-2=K'\cdot f$. We then have the following intersection numbers:

\begin{table}[h!]
\begin{tabular}{|c|c|c|c|c|}
\hline
&$-K$&$H$&$-K'$&$H'$\\
\hline
$f$&$2$&$0$&$-\nu'$&$\mu'$\\
\hline
$f'$&$-\nu$&$\mu$&$2$&$0$\\
\hline
\end{tabular}
 \end{table}

that allow us to write:

\begin{equation}\label{eq:basechange}
 \begin{pmatrix}-K\\H\end{pmatrix}=
 \begin{pmatrix} -\frac{\nu}{2} &\frac{4-\nu\nu'}{2\mu'}\\ \frac{\mu}{2} & \frac{\nu'\mu}{2\mu'} \end{pmatrix}
 \begin{pmatrix}-K'\\H'\end{pmatrix}
\end{equation}

Assume that $\f$ is not a $\P^1$-bundle. Equivalently, $\{-K',H'\}$ is a base of $\Pic(\P(\cE))$, therefore the determinant of the base change matrix is $\pm 2$, and we get $\mu=2\mu'$.
%
Using this base change, together with the equality 
$$
-\Delta=\tau^2\tan^2\left(\dfrac{\pi}{n+1}\right)=\left(\dfrac{\nu}{\mu}\right)^2\tan\left(\dfrac{\pi}{n+1}\right),
$$
obtained as in Proposition \ref{prop:notbig},
the Chern-Wu relation $K^2=\Delta H^2$ transforms into:
\begin{equation}\label{eq:CW-Y}
K'^2=2\dfrac{\nu\nu'(1+a)-4}{\nu\mu'(1+a)}K'H'-\dfrac{16-8\nu\nu'+\nu^2\nu'^2(1+a)}{\nu^2\mu'^2(1+a)}H'^2.
\end{equation}
where $a$ denotes $\tan^2(\pi/(n+1))$.

It is known (see, for instance, \cite{S}) that being $\f$ a conic bundle, the cohomology class of the closed set of points in $Y$ whose inverse image by $\f$ is a singular conic is $-\f'_*(K'^2)$. By hypothesis, $\f$ is smooth, hence the intersection number $K'^2H'^{n-1}$ is equal to zero. Using (\ref{eq:CW-Y}) and the obvious equalities $H'^{n+1}=0$, $-K'H'^{n}=2H_Y^{n}$, we then have
$$
\nu\nu'=\dfrac{4}{1+a}=4\cos\left(\dfrac{\pi}{n+1}\right)=1,2,3,\quad\mbox{for }n=2,3,5\mbox{, respectively.}
$$

Since $\nu=i_X\mu-2=2(i_X\mu'-1)$ is even, we conclude that the only possibility is $n=3$, $i_X=2$, $i_Y=3$, and $\mu'=1$.
In this case, we have $-K=K'+H'$, $H=-K'+H'$, and the Chern-Wu relation reads as $K'^2=-H'^2$, from which we obtain
$-KH^3=-4K'H'^3$. In particular $X$ has degree $8$, contradicting that it is a del Pezzo $3$-fold of index two (cf. \cite{F2}).   

\end{proof}
%

We will now show that, with one exception, indecomposable Fano bundles are stable. Note that it was conjectured
by Grauert and Schneider (actually they provided an incomplete proof, cf. \cite{GS}) that every indecomposable rank two vector bundle on
$\P^n$, $n\geq 4$ is semistable.

\begin{theorem} \label{prop:stFano}
Let $(X,\cE)$ be as in Setup \ref{ass:sec4} and assume that $\cE$ is Fano, indecomposable and not stable; then  $X \simeq \P^2$ and $\cE$ is a bundle with $c_1=0, c_2=1$ whose projectivization is the blow-up of a smooth three-dimensional quadric along a line.
\end{theorem}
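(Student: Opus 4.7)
The plan is to combine the Fano inequality $\tau<i_X$ with the splitting criterion of Theorem~\ref{prop:nssemistable} to drastically restrict the possibilities for $(X,\cE)$, and then to use the structural information provided by Lemma~\ref{lem:contractiontype}, Proposition~\ref{prop:secondray} and the classification of indecomposable Fano bundles on low-index Fano manifolds of Picard number one to reduce to the exceptional case.

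First I would fix a minimal covering family $\cM$ of rational curves on $X$ of $H_X$-degree $\mu$, so that $\cM_x$ is proper for general $x\in X$ and Theorem~\ref{prop:nssemistable} is available. Its contrapositive, combined with $\tau<i_X$, reads: if $\cE$ is not semistable, $-\rho<4/\mu-i_X$; if $\cE$ is semistable but not stable, $\mu i_X<6$. In the not semistable subcase, $\rho=2\beta+c_1$ is an integer of the same parity as $c_1$ with $-\rho\ge 2$ when $c_1=0$ and $-\rho\ge 1$ when $c_1=-1$ (since $\beta<-c_1/2$ is an integer), and the bound leaves only $(\mu,i_X)=(1,1)$ or $(1,2)$ as short lists of possibilities. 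In each case $\tau$ is forced to take one of the two values $i_X-2/(H\cdot C)$ or $i_X-1/(H\cdot C)$ of Lemma~\ref{lem:contractiontype}, and the resulting pairs $(X,\cE)$ contradict the classification of indecomposable Fano bundles on Fano manifolds of Picard number one and low index (cf.~\cite{SW1,SW2,APW}), each of which is stable.

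In the semistable but not stable subcase one has $c_1=0$, $\beta=0$, $\rho=0$ and, by Lemma~\ref{lem:estdelta=0}, $\Delta<0$. Since $\rho<\tau$, Proposition~\ref{prop:secondray} forces the second contraction $\f\colon\P(\cE)\to Y$ to be divisorial, and Lemma~\ref{lem:contractiontype} then specifies that $\f$ is the smooth blow-up of a codimension two subvariety $W\subset Y$. Listing the finitely many pairs $(\mu,i_X)$ with $\mu i_X\le 5$, the Chern--Wu relation $K_{\rel}^2=\Delta H^2$ together with the numerical identities associated to the two Mori contractions of $\P(\cE)$ (as in the proof of Lemma~\ref{lem:pbundle}) determines $c_2$ and the invariants of $(Y,W)$ in each case. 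The classification of Fano manifolds of Picard number one in small dimension then discards all cases except $X\cong\P^2$, $i_X=3$, $\mu=1$, $c_2=1$, in which the bundle arises as a non-split extension $\shse{\cO_{\P^2}}{\cE}{\cI_p}$ for some point $p\in\P^2$; matching Chern classes and the type of the second contraction yields the classical identification $\P(\cE)\cong\Bl_\ell Q^3$.

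The main technical obstacle lies in this last step, the case analysis enumerating the finitely many numerical possibilities for $(\mu,i_X)$: ruling out every case except the exceptional one requires pairing the information coming from Chern--Wu and the two Mori contractions of $\P(\cE)$ with the classification of Fano manifolds of Picard number one in the relevant dimensions, and then verifying that the surviving numerical datum is indeed realized by the classical blow-up of $Q^3$ along a line.
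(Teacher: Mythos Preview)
Your plan has a genuine gap in each of the two subcases.

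\textbf{Not semistable case.} After reducing to $\mu=1$ and $i_X\in\{1,2\}$ you appeal to a ``classification of indecomposable Fano bundles on Fano manifolds of Picard number one and low index'', citing \cite{SW1,SW2,APW}. No such classification exists: those references treat only $\P^n$ and $\Q^n$ (the high-index range $i_X\ge n$), whereas $i_X\le 2$ covers all del Pezzo manifolds and prime Fanos, for which there is no available list of rank-two Fano bundles. The paper argues internally instead. First $i_X=1$ is excluded because Remark~\ref{rem:idea} gives $\tau\ge\tau(\ell_0)=-\rho\ge 1$, contradicting $\tau<i_X=1$. Then for $i_X=2$, $\rho=-1$, $\mu=1$ the same remark forces $1\le\tau(\ell)<2$ for every $\ell\in\cM$, so $\cE$ is uniform of splitting type $(0,-1)$; since $\beta=0$ one has $H^0(\cE(-(c_1+t)/2))=H^0(\cE)\neq 0$ and Corollary~\ref{cor:splitsection} yields the splitting. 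This is self-contained and does not rely on any external classification.

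\textbf{Semistable, not stable case.} You correctly obtain $\rho=0$, $\Delta<0$, and that $\f$ is a smooth blow-up. But the enumeration of pairs $(\mu,i_X)$ with $\mu i_X\le 5$ coming from Theorem~\ref{prop:nssemistable} does not bound the dimension $n$: for instance $i_X=2$, $\mu=1$ occurs for del Pezzo manifolds of every dimension. Without a bound on $n$ you cannot invoke any classification of Fano manifolds ``in small dimension''. The paper extracts this bound from the \emph{divisorial} part of Proposition~\ref{prop:secondray}, which you cite only for its fiber-type clause: with $\rho=0$, equation~\eqref{eq:schr2} becomes $\pi/2+n\arg(\tau+i\sqrt{-\Delta})=\pi$, hence $-\Delta=\tau^2\tan^2(\pi/(2n))$, and the rationality of $\tan^2(\pi/(2n))$ forces $n\in\{2,3\}$ via \cite{Ni,Ca}. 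Only then does the paper combine the blow-up numerology $l(R_2)=1$, $E\cdot C=-1$ (giving $i_X(H\cdot C)=3$, which involves $H\cdot C$, not the covering-family degree $\mu$) with the classification of Fano threefolds and of $\P^2$ to isolate the exceptional case. Your plan conflates $\mu$ with $H\cdot C$ and omits the Niven-type step that makes the rest of the argument finite.
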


\begin{proof}
Let $\cM$ be a minimal covering family of rational curves on $X$ of $H_X$-degree equal to $\mu$.
Assume first that $\cE$ is not semistable; by Proposition \ref{prop:alpha=beta} we have $\rho <0$, hence,
by Theorem  \ref{prop:nssemistable}, $\cE$ is  decomposable unless possibly when  $\rho=-1$, $\mu=1$ and $i_X=2$. In this case, by Remark \ref{rem:idea}, $2=i_X>\tau\geq\tau(\ell)\geq 1$ for every $\ell\in\cM$, hence $\cE$ is uniform of splitting type $(-1,0)$ and we may conclude by Corollary \ref{cor:splitsection}.

Assume now that $\cE$ is semistable. By Remark \ref{rem:alphanotinteger} we have $\rho=0$. Let $C$ be a minimal rational curve generating the second extremal ray of $\P(\cE)$. A divisor $E \in |L|$ is not nef, otherwise we have $\tau=0$ and $\cE$ splits by Theorem \ref{prop:bound}.
In particular $E$ has negative intersection with $C$,  hence the second contraction $\f:\P(\cE) \to Y$ is a smooth blow-up with exceptional locus $E$, by Lemma \ref{lem:contractiontype}; in particular $E\cdot C=-1$.

 On the other hand $\beta=0$ implies that $c_2>0$, so that $\Delta<0$ and we may apply Proposition \ref{prop:secondray} to get:

\begin{equation}\label{eq:notstable}
\frac{4c_2}{d}=-\Delta=\tau^2\tan^2\left(\dfrac{\pi}{2n}\right).
\end{equation}
As in the proof of Proposition \ref{prop:notbig}, we now use  \cite{Ni}, \cite[Prop. 2]{Ca} to get that $n$ is equal to $2$ or $3$.
Moreover, from $l(R_2)=1$ and $E\cdot C=-1$ we get that
either $\mu=1$ (hence $\tau=i_X-1$) and $i_X=3$ or $\mu=3$ and $i_X=1$.

If $n=3$, since a Fano threefold with Picard number one is covered either by lines or by conics, we may assume $i_X=3$.
Then $X$ is a smooth quadric, hence $d=2$, and, from (\ref{eq:notstable}) we get
$2c_2=\frac{4}{3}$, which is impossible.

If $n=2$ then $X$ is $\P^2$; from $\tau=2$ we get that $L+H$ is nef and trivial on the second ray, hence $L+H =\f^*H_Y$ for a line bundle $H_Y$ on $Y$; since $H_Y \cdot f=1$ this line bundle is the generator of $\Pic(Y)$; computing the canonical bundle of $\P(\cE)$ with the blow-up formula we get that $i_Y=3$, i.e. $Y \simeq \Q^3$. The exceptional divisor $E$ contains a fiber of $\pi$, since $c_2=1$ and $\cE$ is semistable.
The center of the blow-up contains the image in $Y$ of this fiber, which is a line $\ell$ in $\Q^3$; since the center is smooth then the center is $\ell$.
 \end{proof}

The main result of this section is a characterization of bundles $\cE$ whose projectivization  $\P(\cE)$ has a second contraction which is a $\P^1$-bundle. We will first recall the following:

\begin{example} Let $K(G_2)$ be a $5$-dimensional Fano homogeneous contact manifold of type $G_2$.  $K(G_2)$ is a Fano manifold with Picard number one, index three and $b_4=1$ which is a linear section of the grassmannian $\G(1,6)$ with a $\P^{13}$ (see \cite[Ex. 1]{Mukai}). Since $K(G_2)$ is covered by lines, the restriction of the universal quotient bundle $\cQ$ on $\G(1,6)$ is a Fano bundle on $K(G_2)$, with $\tau=1$.
The projectivization of $\cQ|_{K(G_2)}$ has a second $\P^1$-bundle structure, over a five dimensional smooth quadric $\Q^5$, which corresponds to the projectivization of a Cayley bundle $\cC$, see \cite[1.3]{O}; via this description  $\P(\cQ|_{K(G_2)}) \to K(G_2)$ can be seen as the universal family of jumping lines of $\cC$.
\end{example}

\begin{theorem} \label{thm:pbundles}
Let $(X,\cE)$ be as in Setup \ref{ass:sec4}, and assume that $\cE$ is indecomposable. Then the following are equivalent
\begin{enumerate}
\item $\P(\cE)$ admits an unsplit dominating family $\cM'$ of rational curves of positive $H$-degree;
\item $\P(\cE)$ has a second contraction which is a $\P^1$-bundle;
\item $(X,\cE)$ is one of the following
\begin{enumerate}
\item $(\P^{2}, T_{\P^{2}})$;
\item $ (\P^3, \cN)$, with $\cN$ a null-correlation bundle;
\item $ (\Q^3, \cS)$ with $\cS$ the restriction of a spinor bundle;
\item $ (\Q^5, \cC)$ with $\cC$ a Cayley bundle;
\item $ (K(G_2), \cQ)$, with $\cQ$ the restriction of the universal quotient bundle.
\end{enumerate}
\end{enumerate}
\end{theorem}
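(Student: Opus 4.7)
The natural strategy is to prove the chain $(3)\Rightarrow(2)\Rightarrow(1)\Rightarrow(2)\Rightarrow(3)$; only the last implication carries substantial content. For $(3)\Rightarrow(2)$, one verifies case by case that each of the five pairs $(X,\cE)$ projectivizes to a variety carrying a second $\P^1$-bundle structure: $\P(T_{\P^2})$ is the flag variety $\mathrm{Fl}(1,2;\C^3)$ with its two Grassmannian projections, while $\P(\cN)$, $\P(\cS)$ and the pair $\P(\cC)\cong\P(\cQ|_{K(G_2)})$ admit the second projections sketched just before the theorem statement. The implication $(2)\Rightarrow(1)$ is immediate, as the fibers of the second contraction form a dominating family of irreducible rational curves of positive $H$-degree, and the irreducibility of every member forces the family to be unsplit.

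For $(1)\Rightarrow(2)$: the positivity of the $H$-degree prevents the class of $\cM'$ from lying in $R_1$, and the standard fact that the numerical class of an unsplit covering family lies on an extremal ray (cf.\ \cite{K}) places $[\cM']$ in $R_2$ and produces an associated contraction $\f:\P(\cE)\to Y$. By Lemma \ref{lem:contractiontype}, $\f$ is a $\P^1$-bundle, a conic bundle with reducible fibers, or the blow-up of a smooth codimension-two subvariety. The blow-up case is excluded because curves in $R_2$ lie in the exceptional divisor, contradicting dominance; the conic-bundle case is excluded because the only unsplit families of rational curves in $R_2$ consist of components of reducible fibers, which cover only a proper subset of $\P(\cE)$. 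Hence $\f$ is a $\P^1$-bundle.

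The main direction is $(2)\Rightarrow(3)$. Writing $\P(\cE)=\P_Y(\cF)$ for a rank-two bundle $\cF$ on a smooth Fano manifold $Y$ of Picard number one (by Lemma \ref{lem:contractiontype}), with $b_4(Y)=1$ by the symmetric Leray-Hirsch identity $b_4(\P(\cE))=b_4(X)+b_2(X)=b_4(Y)+b_2(Y)$, and denoting the corresponding invariants by primed symbols, a base change formula relates $\{-K,H\}$ and $\{-K',H'\}$ exactly as in $(\ref{eq:basechange})$. Combined with the Chern-Wu relations $K_{\rel}^2=\Delta H^2$ on both sides, the identity $H^{n+1}=0$, and the indecomposability of $\cE$ and $\cF$, this yields a finite system of polynomial constraints in $(n,i_X,i_Y,d,d',\mu,\mu',c_1,c_1')$. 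The case $\Delta\geq 0$ is ruled out by combining Remark \ref{rem:extremedelta} and Corollary \ref{cor:tau+rho} with indecomposability (which force $\tau\neq\rho$, contradicting the fiber-type nature of $\f$); in the remaining case $\Delta<0$, the argument of Proposition \ref{prop:notbig} using \cite{Ni, Ca} forces $\tan^2(\pi/(n+1))\in\Q$, restricting $n$ to $\{2,3,5\}$.

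The hard part will be the case-by-case bookkeeping that follows. For each allowed triple $(n,i_X,i_Y)$ and each admissible pair of fiber-degrees $(\mu,\mu')$, one must identify $X$ and $Y$ among Fano manifolds of Picard number one with $b_4=1$ and the prescribed index (via Kobayashi-Ochiai and Wi\'sniewski's classifications of Fano varieties of large index), and then reconstruct $\cE$ from the two projections. Confronting the result with the existing classifications of Fano bundles on $\P^n$ and $\Q^n$ from \cite{APW, SW1, SW2} and with Ottaviani's description of Cayley bundles \cite{O} will leave exactly the five examples listed in (a)--(e).
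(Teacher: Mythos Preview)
Your chain $(3)\Rightarrow(2)\Rightarrow(1)$ is fine and matches the paper. The two substantive implications both have gaps.

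\textbf{$(1)\Rightarrow(2)$.} The ``standard fact that the numerical class of an unsplit covering family lies on an extremal ray'' is not a standard fact, is not in \cite{K}, and is not what the paper does. The paper's argument goes through the splitting machinery: since $\cE$ is indecomposable, Remark~\ref{cor:nsapp} forces every component of $\Cloc(\cM')_y$ to be one-dimensional for \emph{every} $y\in\P(\cE)$ (otherwise $\cE$ would split). Then \cite[Prop.~1]{BCD} gives that the $\cM'$-equivalence quotient is an honest morphism $\f:\P(\cE)\to Y$. Only at that point does Kleiman's criterion show that $\P(\cE)$ is Fano, which is an explicit hypothesis of Lemma~\ref{lem:contractiontype}. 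You bypass both the chain-locus bound and the Fano verification; without them you have neither extremality of $[\cM']$ nor contractibility of $R_2$, so Lemma~\ref{lem:contractiontype} cannot be invoked. Your exclusion of the blow-up and conic-bundle cases by dominance of $\cM'$ is a pleasant alternative to the paper's ``length $=2$'' remark, but it only makes sense once $\f$ and the Fano condition are already in hand.

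\textbf{$(2)\Rightarrow(3)$.} Your outline is correct through the restriction $n\in\{2,3,5\}$, but the ``case-by-case bookkeeping'' you propose is both vaguer and heavier than what the paper does, and your endgame would not close. The paper's decisive step is that the base-change matrix between the $\Z$-bases $\{H,L\}$ and $\{H',L'\}$ of $\Pic(\P(\cE))$ has determinant $\pm 1$, giving $\mu=\mu'$ immediately. This collapses the whole system to the single Diophantine equation
\[
(i_X\mu-2)(i_Y\mu-2)=\tau\tau'\mu^2=4\cos^2\!\left(\tfrac{\pi}{n+1}\right)\in\{1,2,3\},
\]
whose positive integer solutions fill a three-line table and already determine $(X,c_1,c_2)$. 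The bundle $\cE$ is then pinned down \emph{not} via the Fano-bundle classifications \cite{APW,SW1,SW2} you cite (which do not cover $K(G_2)$), but by invoking Theorem~\ref{prop:stFano} to get stability and then the uniqueness of stable rank-two bundles with the computed Chern classes on $\P^2$, $\P^3$, $\Q^5$; the $\Q^3$ and $K(G_2)$ cases come for free as the second $\P^1$-bundle structures on $\P(\cN)$ and $\P(\cC)$. Without the $\mu=\mu'$ step you have no a priori bound on the fiber degrees, and without the stability/moduli argument you have no uniform way to identify the bundle on $K(G_2)$.
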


\begin{proof} (1)\,$ \Rightarrow$ (2).\quad
By Remark \ref{cor:nsapp} the dimension of every irreducible component of $\Cloc(\cM')_y$ is one, for every $y \in \P(\cE)$. By \cite[Prop.~1]{BCD}, this implies that the quotient of $\P(\cE)$ by the $\cM'$-equivalence relation is a morphism $\f:\P(\cE)\to Y$. By Kleiman's criterion $\P(\cE)$ is a Fano manifold; we conclude by Lemma \ref{lem:contractiontype} observing that the length of the ray contracted by $\f$ is two.\par
\medskip
(2)\,$ \Rightarrow$ (3).\quad
Assume that $\f:\P(\cE) \to Y$ makes $\P(\cE)$ a $\P^1$-bundle over a smooth (Fano) variety $Y$. Let $\cE'$ be the normalized rank two vector bundle on $Y$ whose projectivization is $\P(\cE)$, and $c_1'$ be its first Chern class. Denote by $L'$ a divisor associated with its tautological line bundle, and by $H'$ the pull-back via $\f$ of $H_Y$, the ample generator of $\Pic(Y)$. Denote $d_X:=H_X^n$, $d_Y:=H_Y^n$. Finally take $f$ and $f'$ to be fibers of $\pi$ and $\f$ respectively, and set $\mu:=H\cdot f'$, $\mu':=H'\cdot f$, so that $\tau=\tau(\cE)=i_X-2/\mu$, $\tau':=\tau(\cE')=i_Y-2/\mu'$.

Using the intersection numbers of $H,L,H'$ and $L'$ with $f$ and $f'$ we may easily write:
$$
\left\{\begin{array}{l}\vspace{0.2cm}
H'=-\dfrac{\mu'}{2}(c_1-\tau)H+\mu' L\\
L'=\left(-\dfrac{\mu'}{4}(c_1-\tau)(c_1'-\tau')+\dfrac{1}{\mu}\right)H+\dfrac{\mu'}{2}(c'_1-\tau') L
\end{array}\right.
$$
Since $\{H,L\}$ and $\{H',L'\}$ are $\Z$-bases of $\Pic(\P(\cE))$ it follows that the determinant of the matrix of base change, which is $\mu'/\mu$, is $\pm 1$, hence $\mu=\mu'$. In particular we may write $H'=\frac{\mu}{2}(-K_{\rel}+\tau H)$, so that:
$$
\dfrac{d_Y}{d_X}=\left(\dfrac{\mu}{2}\right)^n\dfrac{(-K_{\rel}+\tau H)^n\cdot H/\mu}{-K_{\rel}\cdot H^n/2}=\left(\dfrac{\mu}{2}\right)^{n-1}\dfrac{\im\left((\tau+i\sqrt{-\Delta})^n\right)}{\sqrt{-\Delta}}.
$$
The last equality follows from our computations in Section \ref{sec:posit} and the negativity of $\Delta$ obtained by Proposition \ref{prop:secondray}. Furthermore, as in Proposition \ref{prop:notbig}, we have $n=2,3$ or $5$ and $\sqrt{-\Delta}=\tau\tan(\pi/(n+1))$. Operating in the expression above we get:
$$
\dfrac{d_Y}{d_X}=\left(\dfrac{\tau\mu}{2\cos(\pi/(n+1))}\right)^{n-1},
$$
and this, together with the equation obtained exchanging the roles of $X$ and $Y$, provides:
\begin{equation}\label{eq:final}
\left(i_X\mu-2\right) \left(i_Y\mu-2\right) =\tau\tau'\mu^2=
\begin{cases} 1& \text{if~} n=2 \\
2 &\text{if~} n=3\\
3 &\text{if~} n=5\end{cases}
\end{equation}
From this data we may easily obtain (up to exchanging $X$ and $Y$) the following values:
$$
\begin{array}{|l|l|l|l|l|l|l|l|l|}
\hline
\hspace{0.1cm}n\hspace{0.1cm}&\hspace{0.1cm}i_X\hspace{0.1cm}&
\hspace{0.1cm}i_Y\hspace{0.1cm}&\hspace{0.1cm}d\hspace{0.1cm}&
\hspace{0.1cm}\mu\hspace{0.1cm}&\hspace{0.1cm}\tau\hspace{0.1cm}&
\hspace{0.1cm}\Delta\hspace{0.1cm}&\hspace{0.1cm}c_1\hspace{0.1cm}&
\hspace{0.1cm}c_2\hspace{0.1cm}\\\hline
2&3&3&1&1&1&-3&-1&1\\\hline
3&4&3&1&1&2&-4&0&1\\\hline
5&5&3&1&1&3&-3&-1&1\\\hline
\end{array}
$$
Since the tangent bundle of $\P^2$, the null-correlation bundle on $\P^3$ and the Cayley bundle on $\mathbb Q^5$ are determined, among stable bundles, by their Chern classes (cf. \cite[8.1]{H}, \cite[Lem.~4.3.2]{OSS}, \cite{O}) and we know that $\cE$ is stable by Theorem \ref{prop:stFano} the implication follows.\par
\medskip
(3)\,$ \Rightarrow$ (1).\quad The family of the fibers of the second contraction of $\P(\cE)$ is unsplit, dominating and has positive $H$-degree.
 \end{proof}

\begin{corollary} \label{cor:pbundles}
Let $(X,\cE)$ be as in Setup \ref{ass:sec4} and assume that $\cE$ is Fano and indecomposable. Then the following are equivalent
\begin{enumerate}
\item $(X,\cE)$ is as in {\rm(3)} of Theorem \ref{thm:pbundles}.
\item $i_X-c_1 \equiv 0~ (\text{mod}~2)$.
\item $\tau <i_X -1$.
\end{enumerate}
\end{corollary}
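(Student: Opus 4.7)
The plan is to reduce each implication to the classification provided by Theorem \ref{thm:pbundles} by matching the hypotheses against the trichotomy of Lemma \ref{lem:contractiontype} for the second contraction $\f\colon\P(\cE)\to Y$. Observe that, by Theorem \ref{thm:pbundles}, condition (1) is equivalent to $\f$ being a $\P^1$-bundle, that is, to case (1) of Lemma \ref{lem:contractiontype}.

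For the implications (1) $\Rightarrow$ (2) and (1) $\Rightarrow$ (3) I would proceed by direct inspection of the five pairs. After writing each bundle in its normalized form (so $T_{\P^2}$ is replaced by $T_{\P^2}(-2)$, and similarly for the spinor, Cayley and universal quotient bundles), a short computation gives $c_1\in\{0,-1\}$ and $i_X-c_1\in\{4,6\}$, proving (2); moreover each of the listed examples has $\mu=1$, as read off from the table in the proof of Theorem \ref{thm:pbundles}, and the formula $\tau=i_X-2/(H\cdot C)$ of Lemma \ref{lem:contractiontype}(1) then yields $\tau=i_X-2<i_X-1$, proving (3).

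For (3) $\Rightarrow$ (1), I would read $\tau$ off the three cases of Lemma \ref{lem:contractiontype}: cases (2) and (3) both give $\tau=i_X-1/(H\cdot C)\geq i_X-1$, so the hypothesis $\tau<i_X-1$ excludes them and forces $\f$ to be a $\P^1$-bundle, from which Theorem \ref{thm:pbundles} yields (1). For (2) $\Rightarrow$ (1), I would expand
\[
-K_{\P(\cE)} = (i_X-c_1)H + 2L,
\]
so that for a curve $C$ realizing the length of the second extremal ray one gets
\[
l(R_2) = -K_{\P(\cE)}\cdot C = (i_X-c_1)(H\cdot C) + 2(L\cdot C).
\]
If $i_X-c_1$ is even the right hand side is even, and since $l(R_2)\in\{1,2\}$ by Lemma \ref{lem:contractiontype}, we must have $l(R_2)=2$; this again places us in case (1) of that lemma and hence, by Theorem \ref{thm:pbundles}, in (1) of the Corollary.

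I do not anticipate any serious obstacle, since Theorem \ref{thm:pbundles} and Lemma \ref{lem:contractiontype} together do essentially all the work. The only delicate point is the bookkeeping in (1) $\Rightarrow$ (2): one must remember to shift each bundle in the list so that $c_1\in\{0,-1\}$ before checking the parity of $i_X-c_1$, since $T_{\P^2}$ and the universal quotient bundle, for example, carry positive first Chern classes in their most common presentations.
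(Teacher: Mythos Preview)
Your proof is correct and uses the same two ingredients as the paper's: the parity of $-K_{\P(\cE)}\cdot C = 2L\cdot C + (i_X-c_1)H\cdot C$ to force $l(R_2)=2$, and the values of $\tau$ recorded in Lemma \ref{lem:contractiontype} to separate the $\P^1$-bundle case from the other two. The paper arranges these into the single cycle $(1)\Rightarrow(2)\Rightarrow(3)\Rightarrow(1)$ rather than your four separate implications, but the underlying arguments are the same.
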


\begin{proof}
(1)\,$ \Rightarrow$ (2) is a direct computation. From (2) it follows that $-K_{\P(\cE)}\cdot C=2L\cdot C+(i_X-c_1)H\cdot C$ is even, and we conclude (3) by Lemma \ref{lem:contractiontype}. By Lemma \ref{lem:contractiontype} again we have that, if (3) holds, then $\tau<i_X-1\leq i_X-1/H\cdot C$. This implies that $\P(\cE)$ has a second $\P^1$-bundle structure and we get (1) by Theorem \ref{thm:pbundles}. \end{proof}

As a consequence we get the following classification of uniform rank two vector
bundles on Fano manifolds.

\begin{corollary}\label{cor:unif}
Let $(X,\cE)$ be as in Setup \ref{ass:sec4} and let $\cM$ be a
covering unsplit family of rational curves on $X$ such that $\cM_x$ is irreducible for a general $x\in X$. If $\cE$ is indecomposable and uniform with respect to $\cM$, then  $(X,\cE)$ is either $(\P^2,T_{\P^2})$, $(\Q^3,\cS)$ or $(K(G_2), \cQ)$.
\end{corollary}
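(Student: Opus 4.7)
The plan is to pass from $\cM$ to the family $\til\cM=\til\cM^t$ of minimal sections of $\P(\cE)$ over the curves of $\cM$, show that it is unsplit and dominating with positive $H$-degree, invoke Theorem~\ref{thm:pbundles} to place $(X,\cE)$ among the five pairs of its item~(3), and finally eliminate the two pairs $(\P^3,\cN)$ and $(\Q^5,\cC)$ by a dimension comparison.

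First, $t>0$: otherwise $\cM=\cM^0$ and Remark~\ref{rem:splitsectionAW} would force $\cE$ to decompose, contradicting the hypothesis. Since $t>0$, each $\ell\in\cM$ carries a unique minimal section, so $\til\cM\cong\cM$ and $H\cdot\til\ell=\mu>0$. To see $\til\cM$ is unsplit, I would take a Chow limit $Z_0$ of minimal sections $\til\ell_\lambda$; then $\pi_*Z_0=\ell_0\in\cM$ by unsplitness of $\cM$, and decomposing $Z_0$ into its horizontal section part and vertical fiber components, the equality $L\cdot Z_0=L\cdot\til\ell_\lambda=(c_1-t)\mu/2$ (the minimum of $L$ on sections over $\ell_0$) forces the horizontal part to be $\til\ell_0$ and kills all vertical multiplicities.

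The most delicate step is proving that $\til\cM$ dominates $\P(\cE)$. I would argue by contradiction: if $\til\cM^t$ does not dominate, then by Remark~\ref{lem:a=bprelim} (which uses the irreducibility of $\cM_x$) its locus is a unisecant divisor, which as in the proof of Proposition~\ref{prop:section} corresponds to a nonzero section of $\cE(-(c_1+t)/2)$. Corollary~\ref{cor:splitsection} then forces $\cE$ to be decomposable, contradicting the hypothesis. Once dominance is established, Theorem~\ref{thm:pbundles} applies: $(X,\cE)$ is one of the five pairs in its item~(3) and $\til\cM$ is the family of fibers of the second $\P^1$-bundle contraction $\f:\P(\cE)\to Y$, so $\cM\cong\til\cM\cong Y$ as varieties.

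Reading off from the proof of Theorem~\ref{thm:pbundles}, the possibilities are $Y=\P^2$, $\Q^3$, $\P^3$, $K(G_2)$ and $\Q^5$ for $X=\P^2$, $\P^3$, $\Q^3$, $\Q^5$ and $K(G_2)$ respectively. For $X=\P^2$, $\Q^3$ and $K(G_2)$, the variety $Y$ is precisely the Fano variety of lines in $X$, an irreducible component of $\rat^n(X)$ satisfying all the hypotheses of the corollary. The remaining pairs are excluded by a dimension comparison: any covering family $\cM$ of rational curves of $H_X$-degree $\mu$ on $X$ satisfies $\dim\cM\geq i_X\mu+n-3$, so $\dim\cM\geq 4$ for $X=\P^3$ and $\dim\cM\geq 7$ for $X=\Q^5$, whereas $\dim Y=3$ and $5$ respectively, making $\cM\cong Y$ impossible.
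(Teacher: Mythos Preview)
Your proof is correct and follows the same overall architecture as the paper's: both split into the case where $\til\cM$ dominates $\P(\cE)$ (invoke Theorem~\ref{thm:pbundles}) versus the case where $\loc(\til\cM)$ is a unisecant divisor (conclude decomposability via Corollary~\ref{cor:splitsection}). You are in fact more careful than the paper in one respect: you explicitly verify that $\til\cM$ is unsplit, which Theorem~\ref{thm:pbundles}(1) requires and which the paper leaves implicit.

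The genuine difference is in how the two extra pairs $(\P^3,\cN)$ and $(\Q^5,\cC)$ are excluded. The paper simply says ``checking uniformity in the classification'', meaning one looks at the known splitting behavior of the null--correlation and Cayley bundles on lines (both have jumping lines, hence are not uniform on the unique covering unsplit family). Your argument is more intrinsic: you compare $\dim\cM\geq i_X\mu+n-3$ with $\dim Y=n$ and obtain $i_X\mu\leq 3$, which is violated for $\P^3$ and $\Q^5$. This avoids appealing to case-by-case facts about specific bundles and works uniformly.

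One point deserves a sentence of justification: you assert $\cM\cong\til\cM\cong Y$, but what you actually need (and what follows from your setup) is only $\dim\cM=\dim Y$. The isomorphism $\cM\cong\til\cM$ is clear since $t>0$. For the second step, note that by the construction in the proof of Theorem~\ref{thm:pbundles}(1)$\Rightarrow$(2), the morphism $\f$ is the quotient by $\til\cM$-equivalence, so every curve of $\til\cM$ is a fiber of the $\P^1$-bundle $\f$. The resulting map $\til\cM\to Y$, $\til\ell\mapsto\f(\til\ell)$, is dominant (since $\til\cM$ dominates $\P(\cE)$) and has finite fibers (distinct points of $\cM$ over the same fiber would give the same curve in $X$, and $\cM\to\Chow(X)$ is finite). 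Hence $\dim\cM=\dim Y=n$, which is all your contradiction needs.
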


\begin{proof}
Consider the family of minimal sections $\til\cM$ of $\P(\cE)$ over $\cM$.
By Theorem \ref{thm:pbundles} either $\loc(\til\cM)$ is a divisor, or $(X,\cE)$ is one of the pairs listed there. Moreover, checking uniformity in the classification, we get that in the second case $(X,\cE)$ is  $(\P^2,T_{\P^2})$, $(\Q^3,\cS)$ or $(K(G_2), \cQ)$.

In particular, it suffices to show that $\cE$ splits whenever $\loc(\til\cM)$ is a divisor.  If this is the case, the irreducibility of $\cM_x$ implies that $\loc(\til\cM)$ is a unisecant divisor (cf. Remark \ref{lem:a=bprelim}), determined by an injection $s:\cO(b)\to\cE$. But the general element of $\cM$ does not meet the set of zeroes of $s$, hence by construction $b=(c_1+\tau(\ell))/2$. Note that we may assume that $\tau(\ell)> 0$, by \cite[Prop.~1.2]{AW}, then we may conclude by Corollary \ref{cor:splitsection}. \end{proof}

\begin{remark}\label{rem:watanabe}
Recently, using similar techniques, Watanabe has shown (cf. \cite{Wa}) that the only $\P^1$-bundles over Fano manifolds of Picard number one admitting a second smooth fibration of relative dimension one are those listed in our theorem. In his proof no assumption on $b_4$ is needed. 
\end{remark}


\section{Applications to Hartshorne's Conjecture}\label{sec:apphart}

Throughout this section we will denote by $Y \subset \P^{n}$, with $n \ge 8$, a codimension two smooth subvariety. By Barth-Larsen Theorem (see, for instance, \cite[II, Thm.~3.2.1]{L}), setting $H_{\P^n}:= \cO_{\P^n}(1)$ we obtain that $\Pic(Y) \simeq \mathbb{Z}\langle (H_{\P^n})_{|Y} \rangle$ and $H^4(Y) \simeq \mathbb Z \langle (H_{\P^n})_{|Y}^2 \rangle$.
 Let $\cN$ be the normal bundle of $Y$ in $\P^n$ and $\cE$ the bundle on $\P^{n}$ obtained from $Y$ via the Hartshorne-Serre correspondence, so that $Y\subset\P^n$ appears as the set of zeroes of a section $s\in H^0(\P^{n},\cE)$ and $\cE|_Y \simeq \cN$. \\
 We will apply the techniques previously developed to the pair $(\P^n,\cE)$ (and also to $(Y, \cN)$ although $Y$ is not necessarily Fano, see Remark \ref{rem:boundsarevalid} below). As usual, we will denote by $c_1$ and $c_2$ the integers determining the first and second Chern classes of $\cE$ (or of $\cN$), and by $\Delta:=c_1^2-4c_2$ the discriminant of $\cE$. Note that $\cE$ is not normalized: in fact, by adjunction and Kobayashi-Ochiai Theorem $c_1=(n+1)-i_Y\geq 2$.\\
Set $\tau:=\tau(\cE)$, $\rho:=\rho(\cE)$, denote by $\beta$ the minimum integer such that $\cE(\beta)$ has sections, and by $\tau_Y,\rho_Y$ and $\beta_Y$ the corresponding invariants of $\cN$.

\begin{lemma}\label{lem:rho_Y} Assume that $\Delta \ge 0$. Then $\rho\ge \rho_Y$.
\end{lemma}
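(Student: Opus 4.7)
The plan is to translate $\rho\ge\rho_Y$ into the numerical statement $\beta\ge\beta_Y$ and then to produce a nonzero section of $\cN(\beta)$ on $Y$. Since $\Delta\ge 0$, the Bogomolov inequality together with Lemma~\ref{lem:estdelta=0} (which handles the borderline case $\Delta=0$, where $\cE$ could only be a decomposable $L^{\oplus 2}$) shows that $\cE$ is not stable on $\P^n$. Consequently Proposition~\ref{prop:alpha=beta}, or Remark~\ref{rem:alphanotinteger} in the semistable but non-stable case, yields $\rho=2\beta+c_1$; analogously $\rho_Y=2\beta_Y+c_1$ on $Y$ by Remark~\ref{rem:boundsarevalid}. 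The lemma is thus equivalent to $\beta\ge\beta_Y$, i.e., to $H^0(Y,\cN(\beta))\ne 0$.

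To this end I would use the Hartshorne--Serre sequence $0\to\cO_{\P^n}\to\cE\to\cI_Y(c_1)\to 0$. Twisting by $\cO(\beta)$ with $\beta<0$ and passing to cohomology, the vanishings $H^0(\cO_{\P^n}(\beta))=H^1(\cO_{\P^n}(\beta))=0$ provide $H^0(\cE(\beta))\cong H^0(\cI_Y(d_Y))$, where $d_Y:=c_1+\beta\ge 1$ is the minimum degree of a hypersurface containing $Y$. A nonzero section $s_\beta\in H^0(\cE(\beta))$ then corresponds to a polynomial $f_Y\in H^0(\cI_Y(d_Y))$ of minimal such degree.

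The crux is to verify that the restriction $s_\beta|_Y\in H^0(\cN(\beta))$ is nonzero. Restricting the twisted Hartshorne--Serre sequence to $Y$, the standard identity $\Tor_1^{\cO_{\P^n}}(\cI_Y,\cO_Y)=\wedge^2\cN^\vee=\cO_Y(-c_1)$ (for the smooth codimension-two $Y$) together with the vanishing $s|_Y=0$ yield the four-term exact sequence
$$0\to\cO_Y(\beta)\to\cO_Y(\beta)\to\cN(\beta)\to\cN^\vee(d_Y)\to 0,$$
in which the first arrow is an isomorphism of line bundles and the second is zero, so the third realizes the rank-two self-duality $\cN(\beta)\cong\cN^\vee(d_Y)$. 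Under this identification, $s_\beta|_Y$ corresponds to the image $\bar f_Y$ of $f_Y$ in $\cI_Y/\cI_Y^2\,(d_Y)=\cN^\vee(d_Y)$, so it suffices to show $f_Y\notin\cI_Y^2$. Were $f_Y\in\cI_Y^2$, each partial derivative $\partial_if_Y$ would belong to $\cI_Y$, and Euler's identity $\sum_i x_i\partial_if_Y=d_Yf_Y\ne 0$ would force some $\partial_if_Y$ to be a nonzero element of $\cI_Y$ of degree $d_Y-1$, contradicting either the minimality of $d_Y$ (when $d_Y\ge 2$) or the nonemptyness of $Y$ (when $d_Y=1$, where a nonzero constant would lie in $\cI_Y$). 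Hence $\bar f_Y\ne 0$ and $\beta_Y\le\beta$.

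The main technical obstacle is the Tor-based restriction computation and the careful tracking of $s_\beta$ through it to land on $\bar f_Y$; once this identification is in place the Euler/minimality argument makes the nonvanishing transparent, and the lemma follows.
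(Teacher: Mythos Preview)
Your argument is essentially correct but contains one circular citation. You invoke Remark~\ref{rem:boundsarevalid} to assert $\rho_Y=2\beta_Y+c_1$, yet that remark rests on Corollary~\ref{cor:c2normal}, which in turn uses the very lemma you are proving. Fortunately you do not need the full equality: once you have produced a nonzero section of $\cN(\beta)$, the effective divisor in $|L_Y+\beta H_Y|$ on $\P(\cN)$ already shows that $-K_{\rel,Y}+(2\beta+c_1)H_Y$ is pseudoeffective, hence $\rho_Y\le 2\beta+c_1=\rho$. So simply drop the reference and the claim of ``equivalence'' (you only prove, and only need, one implication).

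With that fix, your route is genuinely different from the paper's. The paper argues by contradiction: it first shows $\rho\ge -c_1+2$ by looking at $1$-secant lines to $Y$ (the section $s$ restricted to such a line vanishes simply, forcing the splitting type and giving a movable section in $\P(\cE)$); then, assuming $\rho<\rho_Y$, it observes that the minimal section $t\in H^0(\cE(\beta))$ must vanish on $Y$, so its zero scheme contains $Y$ and $c_2(\cE(\beta))\ge c_2(\cE)=c_2(\cE(-c_1))$, which on the interval $(-\infty,-c_1/2]$ forces $\beta\le -c_1$, contradicting the secant-line bound. Your approach instead shows \emph{directly} that $t$ does not vanish on $Y$: via the $\Tor$-restricted Hartshorne--Serre sequence you identify $t|_Y$ with the class of the minimal-degree equation $f_Y$ in $\cI_Y/\cI_Y^2$, and then the Euler relation together with minimality of $d_Y$ rules out $f_Y\in\cI_Y^2$. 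Your method is more algebraic and self-contained (no secant-line geometry, no Chern-class comparison), while the paper's is shorter and avoids the $\Tor$ bookkeeping; both ultimately hinge on the same underlying fact that the minimal section of $\cE(\beta)$ cannot vanish along $Y$.
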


\begin{proof}
We begin by taking $\ell$ to be a $1$-secant line to $Y$. The restriction $s_{|\ell}$ vanishes precisely at one point, so that we have an injection of bundles
$$
0\rightarrow\cO_\ell(1)\longrightarrow\cE_{|\ell},
$$
whose cokernel $\cE_{|\ell}\to\cO_\ell(c_1-1)$ provides a section $\tilde{\ell}$ of $\P(\cE)$ over $\ell$ verifying $\tau(\tilde{\ell})=-c_1+2\geq 0$. Since $\ell$ is movable and $c_1-1\geq 1$, then $\tilde{\ell}$ is movable and we may assert that $\rho\geq -c_1+2$.\\
Consider a nonzero section $t\in H^0(\P^n,\cE(\beta))$; by Proposition \ref{prop:alpha=beta} and Remark \ref{rem:alphanotinteger}, $\Delta\geq 0$ implies that $\rho=2\beta+c_1\le 0$.
Assume that $\rho< \rho_Y$; then $t$ must vanish on $Y$. It follows that $$c_2(\cE(\beta))\geq [Y]=c_2(\cE)=c_2(\cE(-c_1)).$$
Since $\beta\leq -c_1/2$ we may then say that $c_2(\cE(\beta))\geq c_2(\cE(-c_1))$ implies that $\beta\leq-c_1$, contradicting that $\rho\geq -c_1+2$.
%
\end{proof}

\begin{corollary}\label{cor:c2normal}
With the same notation as above, assume that $c_2(\cN(\beta_Y))=0$; then $\cE$ is decomposable.
\end{corollary}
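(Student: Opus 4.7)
The plan is to derive the equality $\rho=-\sqrt{\Delta}$ for $\cE$ on $\P^n$ from the hypothesis, after which Remark \ref{rem:extremedelta2} immediately forces the splitting of $\cE$ into line bundles.

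First I would rephrase the hypothesis numerically. Since $b_4(Y)=1$ by Barth--Larsen, the condition $c_2(\cN(\beta_Y))=0$ reads as the integer equation $c_2+c_1\beta_Y+\beta_Y^2=0$, which is equivalent to
$$
\Delta \;=\; c_1^2-4c_2 \;=\; (c_1+2\beta_Y)^2 \;\geq\; 0.
$$
In particular $\Delta\geq 0$. Because $c_1\geq 2$, neither $\cN$ nor $\cE$ can be trivial, so combining Bogomolov's inequality with Lemma \ref{lem:estdelta=0} (invoked for $(Y,\cN)$ via Remark \ref{rem:boundsarevalid}) both bundles are non-semistable. Proposition \ref{prop:alpha=beta} together with Remark \ref{rem:alphanotinteger} therefore yield $\rho_Y=2\beta_Y+c_1\leq 0$, and together with the displayed identity this forces
$$
\rho_Y \;=\; -\sqrt{\Delta},
$$
i.e.\ equality in (\ref{eq:m+alpha}) for $\cN$.

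Next I would transfer this equality to $\cE$. Since $\Delta\geq 0$, Lemma \ref{lem:rho_Y} applies and gives $\rho\geq\rho_Y=-\sqrt{\Delta}$; on the other hand, (\ref{eq:m+alpha}) applied to $\cE$ on $\P^n$ provides the reverse inequality $\rho\leq-\sqrt{\Delta}$. Hence $\rho=-\sqrt{\Delta}$, so equality in (\ref{eq:m+alpha}) also holds for $\cE$, and Remark \ref{rem:extremedelta2} yields that $\cE$ decomposes as a direct sum of line bundles.

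The only delicate point in this outline is bookkeeping: several of the statements of Sections \ref{sec:threshold}--\ref{sec:posit} are phrased for Fano base manifolds and must be applied to $(Y,\cN)$ even though $Y$ is not assumed Fano, which is precisely what Remark \ref{rem:boundsarevalid} is designed to handle. Beyond that, the argument is simply a short chain of inequalities pinning $\rho$ to its extremal value $-\sqrt{\Delta}$.
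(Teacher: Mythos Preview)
Your appeal to Remark \ref{rem:boundsarevalid} is circular: that remark is stated \emph{after} Corollary \ref{cor:c2normal} and explicitly derives its content from it (``Corollary \ref{cor:c2normal} shows that the same holds for the pair $(Y,\cN)$\ldots''). You therefore cannot invoke it while proving the corollary. Moreover, the remark only addresses the bounds of Section \ref{sec:posit}, not Proposition \ref{prop:alpha=beta} or Lemma \ref{lem:estdelta=0}; both of those proofs use a dominating family of rational curves on the base, and $Y$ is not assumed uniruled. The equality $\rho_Y=2\beta_Y+c_1$ on which your chain of inequalities hinges is thus unavailable.

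The paper's argument avoids this by comparing $\beta$ and $\beta_Y$ directly. The real input from Lemma \ref{lem:rho_Y} is not the inequality $\rho\geq\rho_Y$ per se, but what its proof actually establishes: a nonzero $t\in H^0(\P^n,\cE(\beta))$ cannot vanish identically on $Y$, and hence restricts to a nonzero section of $\cN(\beta)$, forcing $\beta\geq\beta_Y$. This uses only the definition of $\beta_Y$, nothing about $\rho_Y$ or rational curves on $Y$. The opposite inequality $\beta\leq\beta_Y$ comes from the quadratic $k\mapsto c_2(\cE(k))$: it vanishes at $\beta_Y$ by hypothesis, is nonnegative at $\beta$, and $\beta\leq -c_1/2$. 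One concludes $\beta=\beta_Y$, so $c_2(\cE(\beta))=0$ and $\cE$ splits. Your route via $\rho_Y=-\sqrt{\Delta}$ encodes the same comparison but packages it in a way that demands an input you do not have on a possibly non-Fano $Y$.
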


\begin{proof}
If $c_2(\cN(\beta_Y))=0$, then $c_2(\cE(\beta_Y))=0$ and we may say that $\Delta\geq 0$; in particular $\cE$ is not stable and  $\beta\leq -c_1/2$. Since moreover $c_2(\cE(\beta))\geq 0$, it follows that $\beta\leq\beta_Y$. On the other hand, Lemma \ref{lem:rho_Y} tells us that $\beta=(\rho-c_1)/2\geq (\rho_Y-c_1)/2\geq \beta_Y$, so that $\beta=\beta_Y$ and so $c_2(\cE(\beta))= 0$. It follows that $\cE$ splits.\end{proof}

\begin{remark}\label{rem:boundsarevalid}{\rm
On a Fano manifold of Picard number one the splitting of a vector bundle $\cE$ reduces to $c_2(\cE(\beta))=0$ via Kodaira Vanishing; Corollary \ref{cor:c2normal} shows that the same holds for the pair $(Y,\cN)$. Since this was the only use of assuming $X$ to be Fano in Section \ref{sec:posit}, this corollary  implies, in particular, that the bounds obtained in Section \ref{sec:posit} work for the pair $(Y,\cN)$ as well. 
%
}
\end{remark}

\begin{remark}\label{rem:dual}{\rm Let us observe that, being globally generated, $\cN(-1)$ is nef, hence $\tau_Y \leq c_1-2$ (it is well-known that, under our assumptions, $\cN(-1)$ is ample, so that the inequality is strict, in fact).}
\end{remark}

Let us assume that $\Delta >0$. In 
\cite[Cor. 7.3]{Ho} it is shown that if $\cE$ is indecomposable, then $c_1> 2\left(n+\sqrt{n+1}\right)$. One of the ingredients of the proof is the fact that $c_2(\cE(\beta))\geq n+2$; joining this with our results of Section \ref{sec:posit} we are able to find a lower bound for $c_1$ of order $3/2$. 

\begin{proposition} If $\Delta >0$ and $c_1 \leq \sqrt{(n^2-4)(n-3)}+\sqrt{\Delta}+3$ then $\cE$ is decomposable.
\end{proposition}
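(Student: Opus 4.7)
My plan is to argue by contradiction: suppose $\cE$ is indecomposable. Then by Corollary \ref{cor:c2normal} the normal bundle $\cN$ is also indecomposable on $Y$, and Remark \ref{rem:boundsarevalid} guarantees that the positivity bounds of Section \ref{sec:posit} remain valid for the pair $(Y,\cN)$ even though $Y$ need not be Fano. In particular, since $\Delta>0$, Remark \ref{rem:extremedelta} gives the strict inequality $\tau_Y > \sqrt{\Delta}$, while $\rho_Y+\sqrt{\Delta}\le 0$ holds by (\ref{eq:m+alpha}).

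The first step is to apply relation (\ref{eq:aj}) with $\rho'=\rho_Y$ and $j=n-2$ to $(Y,\cN)$:
$$(\rho_Y+\sqrt{\Delta})(\tau_Y+\sqrt{\Delta})^{n-2}\;\ge\;(\rho_Y-\sqrt{\Delta})(\tau_Y-\sqrt{\Delta})^{n-2}.$$
Using the signs noted above, this rearranges into $r^{n-2}\le (|\rho_Y|+\sqrt{\Delta})/(|\rho_Y|-\sqrt{\Delta})$, where $r:=(\tau_Y+\sqrt{\Delta})/(\tau_Y-\sqrt{\Delta})$. I would then invoke Holme's bound $c_2(\cE(\beta))\ge n+2$, which together with the identity $c_2(\cE(\beta))=(\rho^2-\Delta)/4$ yields $-\rho\ge\sqrt{\Delta+4(n+2)}$; combined with Lemma \ref{lem:rho_Y} this gives $-\rho_Y\ge\sqrt{\Delta+4(n+2)}$. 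Since the function $x\mapsto(x+\sqrt{\Delta})/(x-\sqrt{\Delta})$ is decreasing, setting $\mu:=\sqrt{1+4(n+2)/\Delta}$ leads to
$$r^{n-2}\;\le\;\frac{\mu+1}{\mu-1}.$$

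To bring in the hypothesis, I would combine Remark \ref{rem:dual} ($\tau_Y\le c_1-2$) with $c_1\le \sqrt{(n^2-4)(n-3)}+\sqrt{\Delta}+3$ to obtain $\tau_Y-\sqrt{\Delta}\le Q$, where $Q:=\sqrt{(n^2-4)(n-3)}+1$. This forces $r\ge 1+2\sqrt{\Delta}/Q$, and monotonicity of $x\mapsto x^{n-2}$ on the positive reals yields
$$\Bigl(1+\tfrac{2\sqrt{\Delta}}{Q}\Bigr)^{n-2}\;\le\;\frac{\mu+1}{\mu-1}.$$

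The hard step is to show that this last inequality in fact fails for every $\Delta>0$ and every $n\ge 8$, producing the contradiction. Expanding the left hand side via $(1+x)^{n-2}\ge 1+(n-2)x+\binom{n-2}{2}x^2$ with $x=2\sqrt{\Delta}/Q$, and using the identity $2/(\mu-1)=(\Delta+\sqrt{\Delta^2+4(n+2)\Delta})/(2(n+2))$, the desired strict inequality reduces, after a short manipulation, to $(B-1)t+A>\sqrt{t^2+4(n+2)}$ for all $t:=\sqrt{\Delta}\ge 0$, where $A:=4(n^2-4)/Q$ and $B:=4(n^2-4)(n-3)/Q^2$. Squaring, this becomes positivity on $t\ge 0$ of the quadratic $B(B-2)t^2+2(B-1)At+A^2-4(n+2)$. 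For $n\ge 8$ all three relevant coefficients are positive: $B>2$ and $A^2>4(n+2)$ both follow from straightforward estimates on $w:=\sqrt{(n^2-4)(n-3)}$ (already $w\ge\sqrt{300}$). Since the leading coefficient, the linear coefficient and the constant term are all positive, Vieta's formulas force any real root to be non-positive, so the quadratic is positive on $[0,\infty)$, yielding the required contradiction.
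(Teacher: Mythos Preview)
Your argument is correct and rests on exactly the same three ingredients the paper uses: the positivity inequality (\ref{eq:aj}) for the pair $(Y,\cN)$ (equivalently, the bound $|\rho_Y|-\sqrt{\Delta}\le\varepsilon$ of Lemma~\ref{lem:posit4} in dimension $n-2$), Holme's lower bound $c_2(\cE(\beta))\ge n+2$ transported to $\cN$ via Lemma~\ref{lem:rho_Y}, and the upper bound $\tau_Y\le c_1-2$ of Remark~\ref{rem:dual}. The organization differs, however. The paper bounds $\varepsilon$ from above by isolating separately the linear term $\delta_1$ and the quadratic term $\delta_2$ in the binomial expansion of $\big((\tau_Y+\sqrt{\Delta})/(\tau_Y-\sqrt{\Delta})\big)^{n-2}$, and from below by the two quantities $\varepsilon_1,\varepsilon_2$; matching these in pairs forces the case distinction $\Delta\lessgtr c_2(\cN(\beta_Y))$. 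You instead keep both binomial terms together, rewrite (\ref{eq:aj}) as $r^{n-2}\le(\mu+1)/(\mu-1)$, and reduce everything to the positivity on $t=\sqrt{\Delta}\ge 0$ of the single quadratic $B(B-2)t^{2}+2A(B-1)t+A^{2}-4(n+2)$, whose coefficients are all positive for $n\ge 8$. This avoids the case split and makes the final verification entirely elementary; in content the two proofs are the same, but your packaging is a little more streamlined.
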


\begin{proof}
By Lemma \ref{lem:posit4} we know that $\rho_Y^2 \le \Delta + 2\varepsilon \sqrt{\Delta} + \varepsilon^2$, hence
$$ \varepsilon^2 + 2\varepsilon \sqrt{\Delta} - 4c_2(\cN(\beta_Y)) \ge 0,$$
so we must have $\varepsilon \ge  -\sqrt{\Delta} + \sqrt{\Delta + 4c_2(\cN(\beta_Y))}$.
We can rewrite this as
$$\varepsilon \ge \dfrac{4c_2(\cN(\beta_Y))}{ \sqrt{\Delta} + \sqrt{\Delta + 4c_2(\cN(\beta_Y))}}.
$$

We have the following bounds:
$$
\varepsilon > \begin{cases}
 \varepsilon_1=\sqrt{c_2(\cN(\beta_Y))} & \quad \text{if} \quad \Delta \le c_2(\cN(\beta_Y))\\
 & \\
  \varepsilon_2=\dfrac{c_2(\cN(\beta_Y))}{\sqrt{\Delta}} &  \quad \text{if} \quad\Delta \ge c_2(\cN(\beta_Y))
\end{cases}
$$
Writing $\varepsilon$ as
$$\varepsilon=  \dfrac{2\sqrt{\Delta}}{\left(1+\dfrac{2 \sqrt{\Delta}}{\tau_Y-\sqrt{\Delta}}\right)^{n-2}-1}$$
we see, taking different terms of the binomial expansion of the denominator, that
 $$\varepsilon < \delta_1= \dfrac{2\sqrt{\Delta}}{(n-2)\left(\dfrac{2\sqrt{\Delta}}{(\tau_Y-\sqrt{\Delta})}\right)}=\dfrac{\tau_Y -\sqrt{\Delta}}{n-2}$$
$$\varepsilon < \delta_2=  \dfrac{2\sqrt{\Delta}}{\dfrac{(n-2)(n-3)}{2}\left(\dfrac{4 \Delta}{(\tau-\sqrt{\Delta})^2}\right)}= \dfrac{(\tau_Y-\sqrt{\Delta})^2}{(n-2)(n-3) \sqrt{\Delta}}.$$
Now let us write the conditions $\delta_i > \varepsilon_i$ and Remark \ref{rem:dual}:
$$ c_1-2>\tau_Y > \sqrt{\Delta} +(n-2) \sqrt{c_2(\cN(\beta_Y))} $$
$$ c_1-2>\tau_Y> \sqrt{\Delta}+ \sqrt{(n-2)(n-3) c_2(\cN(\beta_Y))}$$
Now we observe that, by Lemma \ref{lem:rho_Y} we have $\beta_Y \le \beta$, hence $c_2(\cN(\beta_Y)) \ge c_2(\cE(\beta))$, and we use \cite[Proposition 6.3]{Ho} to get $c_2(\cE(\beta)) \ge n+2$ and conclude.
\end{proof}

Finally let us consider separately the case in which  $Y$ is a {\it numerical complete intersection of type $(a,b)\in\Z^ 2$}, $a \leq b$, that is when $\Delta =(b-a)^2\geq 0$. 

\begin{remark}\label{rem:numericalbeta}{\rm Observe that, by Corollary \ref{cor:c2normal}, a numerical complete intersection with $\beta_Y=-a$ or $-b$  is a complete intersection.}
\end{remark}
%

\begin{corollary}\label{cor:nci} Assume that $Y$ is a numerical complete intersection of type $(a,b)$ with $a \le b$.
If $(n-3)(b-1)>(a-2)(a-1)$, then $Y$ is a complete intersection.
\end{corollary}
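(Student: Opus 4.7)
Let $m:=\dim Y=n-2$ and note $\sqrt{\Delta}=b-a\ge 0$. The plan is to show $\beta_Y=-b$: then Remark~\ref{rem:numericalbeta} finishes the proof. Since $\cN$ is not semistable (for $\Delta>0$ by Bogomolov's inequality; for $\Delta=0$ by Lemma~\ref{lem:estdelta=0} applied to a normalization, because $\cN$ trivial would already force $Y$ to be a complete intersection), Proposition~\ref{prop:alpha=beta} yields $\rho_Y=2\beta_Y+(a+b)$. By Remark~\ref{rem:boundsarevalid}, Lemma~\ref{lem:posit4} applies to $(Y,\cN)$ and places $\rho_Y$ in the interval $[-\sqrt{\Delta}-\varepsilon,\,-\sqrt{\Delta}]$, with $\varepsilon$ as in that lemma (with $n$ there replaced by $m$). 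Both $\rho_Y$ and $-\sqrt{\Delta}=a-b$ are integers of the same parity as $a+b$, so two consecutive admissible values of $\rho_Y$ differ by $2$. Consequently $\varepsilon<2$ will force $\rho_Y=-\sqrt{\Delta}$, i.e.\ $\beta_Y=-b$.

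To bound $\varepsilon$, Remark~\ref{rem:dual} gives $\tau_Y\le c_1-2=a+b-2$, so $u:=\tau_Y-\sqrt{\Delta}\le 2(a-1)$ and $v:=\tau_Y+\sqrt{\Delta}\le 2(b-1)$. When $\Delta>0$, the function $\tau_Y\mapsto 2\sqrt{\Delta}\,u^m/(v^m-u^m)$ is monotone increasing in $\tau_Y$, hence
\[
\varepsilon\;\le\;\frac{2(b-a)(a-1)^m}{(b-1)^m-(a-1)^m}.
\]
A direct manipulation shows that this upper bound is $<2$ if and only if
\[
(b-a+1)(a-1)^m\;<\;(b-1)^m.
\]
(In the case $\Delta=0$ the formula $\varepsilon=\tau_Y/m$ gives $\varepsilon\le 2(a-1)/m$, and $\varepsilon<2$ amounts to $m>a-1$.)

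The main step is to derive the last inequality from the hypothesis. Set $t:=(b-1)/(a-1)\ge 1$; since $t^m-1=(t-1)(1+t+\dots+t^{m-1})$, the displayed inequality is equivalent to
\[
1+t+t^2+\dots+t^{m-1}\;>\;a-1.
\]
Because $t\ge 1$, each of the $m-1$ summands $t,t^2,\dots,t^{m-1}$ is at least $t$, so the left-hand side is at least $1+(m-1)t$. After multiplying by $a-1>0$, the condition $1+(m-1)t>a-1$ is precisely $(m-1)(b-1)>(a-1)(a-2)$, i.e.\ the hypothesis $(n-3)(b-1)>(a-2)(a-1)$. The $\Delta=0$ case is the $t=1$ limit of this: the hypothesis then reduces to $m-1>a-2$, i.e.\ $m>a-1$, as required. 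The main obstacle is spotting that the elementary lower bound $1+(m-1)t$ on the geometric series matches the numerical hypothesis exactly; everything else is bookkeeping within the positivity machinery of Section~\ref{sec:posit}.
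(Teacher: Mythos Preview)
Your proof is correct and follows essentially the same route as the paper's: bound $\varepsilon$ for the pair $(Y,\cN)$ using $\tau_Y\le c_1-2$ from Remark~\ref{rem:dual}, reduce $\varepsilon<2$ to the inequality $(b-a+1)(a-1)^{n-2}<(b-1)^{n-2}$, and then observe that the geometric sum $1+t+\dots+t^{n-3}$ is bounded below by $1+(n-3)t$, which is exactly the hypothesis. Two minor cosmetic points: your substitution $t=(b-1)/(a-1)$ is undefined when $a=1$, but in that case the bound is trivially $0<2$ (the paper notes this separately); and your appeal to Lemma~\ref{lem:estdelta=0} for $\Delta=0$ relies on $Y$ being simply connected rather than Fano, which holds here by Barth--Larsen and is implicitly covered by Remark~\ref{rem:boundsarevalid}.
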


\begin{proof}
We will show that $\cE$ is decomposable. By Remark \ref{rem:dual} we have that $\tau_Y < c_1-2=a+b-2$. Note that, by Proposition \ref{prop:alpha=beta}, Remark \ref{rem:alphanotinteger} and  Remark \ref{rem:extremedelta2}, $\cE$ splits if and only if $\rho=2\beta+c_1=-\sqrt{\Delta}$. Since $\beta\in\Z$, using Lemma  \ref{lem:posit4}, this is equivalent to $\varepsilon<2$.\\
In case  $\Delta=0$ this gives $\tau_Y <2(n-2)$, which is satisfied in our assumptions. In case $\Delta >0$ we have
$$\varepsilon=\dfrac{2\sqrt{\Delta}(\tau_Y-\sqrt{\Delta})^{n-2}}{(\tau_Y+\sqrt{\Delta})^{n-2}-(\tau_Y-\sqrt{\Delta})^{n-2}}
<\dfrac{2\sqrt{\Delta}(c_1-2-\sqrt{\Delta})^{n-2}}{(c_1-2+\sqrt{\Delta})^{n-2}-(c_1-2-\sqrt{\Delta})^{n-2}}.$$
It follows that
$$\varepsilon<
\dfrac{2(b-a)(a-1)^{n-2}}{(b-1)^{n-2}-(a-1)^{n-2}}.$$
The condition $\varepsilon <2$  holds trivially when $a=1$, hence we may assume $b> a>1$ and a sufficient condition for the splitting is:
$$
1>\dfrac{(b-a)(a-1)^{n-2}}{(b-1)^{n-2}-(a-1)^{n-2}}=(a-1)\dfrac{\left(\frac{b-1}{a-1}\right)-1}{\left(\frac{b-1}{a-1}\right)^{n-2}-1},
$$
that is,
$$
\dfrac{\left(\frac{b-1}{a-1}\right)^{n-2}-1}{\left(\frac{b-1}{a-1}\right)-1}>(a-1).
$$
In particular, since $(b-1)/(a-1)> 1$, this condition holds whenever $1+(n-3)(b-1)/(a-1)>(a-1)$, that is $(n-3)(b-1)>(a-2)(a-1)$.
\end{proof}

\begin{remark}
Corollary \ref{cor:nci} improves the known results about Hartshorne's Conjecture for codimension two numerical complete intersections (Cf. \cite[Corollary 2.3]{EF}).
\end{remark}

\bibliographystyle{amsalpha}

\end{document}